\documentclass[a4paper]{amsart}

\usepackage[T1]{fontenc}
\usepackage{lmodern}
\usepackage{amsmath, amssymb, amsfonts, amsthm}
\usepackage{mathrsfs}
\usepackage{graphicx}
\usepackage{enumerate}
\usepackage{tikz}
\usepackage{url}
\usepackage[colorlinks=true, allcolors=black]{hyperref}

\makeatletter
\@addtoreset{equation}{section}
\makeatother

\theoremstyle{plain}
\newtheorem{theorem}{Theorem}[section]
\newtheorem{lemma}[theorem]{Lemma}

\newtheorem{question}[theorem]{Question}
\newtheorem{maintheorem}{Theorem}

\newtheorem{maincorollary}{Corollary}

\newtheorem{mainlemma}{Lemma}

\theoremstyle{definition}
\newtheorem{definition}[theorem]{Definition}

\theoremstyle{remark}
\newtheorem{remark}[theorem]{Remark}

\begin{document}

\title[Bohr chaoticity, semi-horseshoes and full-entropy abundance]{Bohr chaoticity, semi-horseshoes and full-entropy abundance}

\author{Xiaobo Hou, Wanshan Lin and Xueting Tian}

\address{Xiaobo Hou, School of Mathematical Sciences, Dalian University of Technology, Dalian 116024, P.R. China}
\email{xiaobohou@dlut.edu.cn}

\address{Wanshan Lin, School of Mathematical Sciences,  Fudan University, Shanghai 200433, P.R. China}
\email{21110180014@m.fudan.edu.cn}

\address{Xueting Tian, School of Mathematical Sciences,  Fudan University, Shanghai 200433, P.R. China}
\email{xuetingtian@fudan.edu.cn}

\begin{abstract}
	Bohr chaoticity is a topological notion of dynamical complexity defined through non-orthogonality to all non-trivial weights. It is strictly stronger than positivity of topological entropy and also has strong consequences for the invariant-measure structure. In this paper, we show that every dynamical system having a semi-horseshoe, including every positive-entropy graph map and every $C^1$ partially hyperbolic diffeomorphism, is Bohr chaotic; furthermore, the set of points correlated with any given non-trivial weight has positive topological entropy. Moreover, for positive-entropy dynamical systems with either the shadowing property or the modified almost specification property, such set can has full topological entropy. Our results also yield applications in several classical algebraic and smooth settings, as well as in the $C^0$-generic setting of topological dynamics.
\end{abstract}

\thanks{All authors are co-first authors of the article.}
\keywords{Bohr chaoticity, semi-horseshoe, shadowing property, specification-like property, topological entropy}
\subjclass[2020] {37B10; 37B40; 37D20; 37D25}
\maketitle
\section{Introduction}

Throughout this paper, $(X,f)$ denotes a topological dynamical system, where $X$ is a compact metric space and $f:X\to X$ is continuous. Let $\mathcal M(X)$, $\mathcal M_f(X)$, and $\mathcal M_f^e(X)$ denote the spaces of Borel probability measures, $f$-invariant Borel probability measures, and $f$-ergodic Borel probability measures on $X$, respectively. We write $\mathbb Z$, $\mathbb N$, and $\mathbb N^+$ for the integers, non-negative integers, and positive integers. Let $C(X)$ denote the space of real-valued continuous functions on $X$, endowed with the norm
\[
\|\varphi\|:=\sup_{x\in X}|\varphi(x)|.
\]
In a Baire space, a set is said to be \emph{residual} if it contains a dense $G_\delta$ subset.

Weighted orthogonality in topological dynamics is closely related to Sarnak's conjecture, which predicts that the M\"obius function is orthogonal to every zero-entropy topological dynamical system \cite{Sarnak}. In this direction, Bohr chaoticity, introduced by Fan, Fan, Ryzhikov and Shen \cite{Fan-Fan-Ryzhikov-Shen2022} and later extended from $\mathbb Z$- to $\mathbb Z^d$-actions by Fan, Schmidt and Verbitskiy \cite{Fan-Schmidt-Verbitskiy-2024}, may be viewed as a strong opposite phenomenon: it requires non-orthogonality to every non-trivial weight. More precisely, a topological dynamical system $(X,f)$ is said to be \emph{Bohr chaotic} if for every non-trivial weight $\vartheta=(\vartheta_n)_{n\ge 0}$, there exist $\varphi\in C(X)$ and $x\in X$ such that
\[
\limsup_{N\to\infty}\frac{1}{N}
\left|
\sum_{n=0}^{N-1}\vartheta_n\varphi(f^n x)
\right|>0,
\]
where a bounded real sequence
\[
\vartheta=(\vartheta_n)_{n\ge 0}\in \ell^\infty(\mathbb N)
\]
is called a \emph{non-trivial weight} if
\[
\limsup_{N\to\infty}\frac{1}{N}\sum_{n=0}^{N-1}|\vartheta_n|>0.
\]
Equivalently, $(X,f)$ is Bohr chaotic if no non-trivial weight is orthogonal to it, where a non-trivial weight $\vartheta$ is said to be \emph{orthogonal} to $(X,f)$ if for every $\varphi\in C(X)$ and every $x\in X$,
\[
\lim_{N\to\infty}\frac{1}{N}
\sum_{n=0}^{N-1}\vartheta_n\varphi(f^n x)=0.
\]
We work here with real-valued continuous functions and real weights; this is equivalent to the complex-valued version, see \cite[p.~348]{Tal2023}.

Bohr chaoticity is closely related to entropy, but it is strictly stronger than mere positivity of topological entropy \cite[p.~1128]{Fan-Fan-Ryzhikov-Shen2022}. It also has strong consequences for the invariant-measure structure: by \cite[Corollary 2.5]{Tal2023}, a Bohr chaotic system must have continuum many ergodic invariant measures. Existing sufficient conditions for Bohr chaoticity arise mainly from horseshoe-type structures \cite{Fan-Fan-Ryzhikov-Shen2022}, from specification property \cite{Tal2023}, and from certain algebraic models \cite{Fan-Fan-Ryzhikov-Shen2022,Fan-Schmidt-Verbitskiy-2024}. These results reveal Bohr chaoticity as a robust manifestation of orbit complexity, but a flexible mechanism covering broader orbit-tracing phenomena is still missing. Related shadowing-based sufficient conditions for Bohr chaos were also obtained by Kawaguchi in connection with hyperbolic sets \cite{Kawaguchi-2026}.

\subsection{Semi-horseshoes imply Bohr chaoticity}
Let $(\{0,1\}^{\mathbb{N}},\sigma)$ and $(\{0,1\}^{\mathbb{Z}},\sigma)$ be the one-sided full shift and two-sided full shift, respectively. Given $N\in\mathbb{N}^+$, recall that a subsystem $(\Lambda,f^N)$ of $(X,f^N)$ is said to be a \emph{one-sided $N$-order semi-horseshoe} (resp. \emph{two-sided $N$-order semi-horseshoe}) of $(X,f)$ if there exists a continuous surjection $$\pi:\Lambda\to\{0,1\}^{\mathbb{N}}~(\text{resp. }\{0,1\}^{\mathbb{Z}})$$ such that $$\pi\circ f^N=\sigma\circ\pi.$$ If further $\pi$ is injective, then we say that $(\Lambda,f^N)$ is a \emph{one-sided $N$-order horseshoe} (resp. \emph{two-sided $N$-order horseshoe}) of $(X,f)$. For convenience, we say that $(X,f)$ has a horseshoe (resp. semi-horseshoe) if it has a one-sided $N$-order horseshoe (resp. semi-horseshoe) or a two-sided $N$-order horseshoe (resp. semi-horseshoe) for some $N\in\mathbb{N}^+$. Fan, Fan, Ryzhikov and Shen \cite{Fan-Fan-Ryzhikov-Shen2022} showed that every dynamical system having a horseshoe is Bohr chaotic. Moreover, they constructed a partially hyperbolic toral automorphism having a semi-horseshoe but no horseshoes, and posed the following question.
\begin{question}\cite[Question 1]{Fan-Fan-Ryzhikov-Shen2022}
	Is every dynamical system having a semi-horseshoe Bohr chaotic?
\end{question}
In the first part of this paper, we answer this question affirmatively.
\begin{maintheorem}\label{Theorem A}
	If $(X,f)$ has a semi-horseshoe, then $(X,f)$ is Bohr chaotic.
\end{maintheorem}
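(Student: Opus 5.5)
The plan is to reduce to the symbolic model by building a single well-chosen point $x\in\Lambda$ and a single function $\varphi$, both adapted to the weight. Fix a non-trivial weight $\vartheta$ and a semi-horseshoe $(\Lambda,f^N)$ with factor map $\pi$ onto $\{0,1\}^{\mathbb N}$. The two-sided case should reduce to this one by composing with the restriction to non-negative coordinates. Write $n=Nk+j$ with $0\le j<N$. Since $\limsup\frac1M\sum_{n<M}|\vartheta_n|>0$, pigeonhole gives a residue $r$ such that the subsequence $\vartheta^{(r)}_k:=\vartheta_{Nk+r}$ is itself a non-trivial weight. The point to aim for is a point $x\in\Lambda$ whose coding $\omega=\pi(x)$ is tuned to the signs of $\vartheta^{(r)}$, say $\omega_k=1$ iff $\vartheta_{Nk+r}>0$. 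This is possible because $\pi$ is onto, so every prescribed coding has a (possibly non-unique) preimage. The relevant test functions are $\psi\circ\pi$ with $\psi(\omega)=\omega_0$; on the full shift the sum $\sum_k\vartheta^{(r)}_k\psi(\sigma^k\omega)$ then has average at least $\limsup\frac1K\sum_{k<K}(\vartheta^{(r)}_k)^+$.

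The first technical step is to turn $\psi\circ\pi$, which is only defined on $\Lambda$, into a function on $X$ evaluated along the full $f$-orbit rather than along the $f^N$-orbit. I would extend $\psi\circ\pi\circ f^{-r}$ from the compact set $f^r(\Lambda)$ only if $\pi$ factors through $f^r$. Since this is not automatic, I would instead use Tietze to extend $\psi\circ\pi$ from $\Lambda$ to some $\Phi\in C(X)$ and then test against $\varphi=\Phi$. This requires replacing $x$ by a point $x'$ with $f^r x'\in\Lambda$, which is harmless if we shift the weight by $r$, since Bohr chaoticity is insensitive to finite shifts of $\vartheta$. After this normalisation the controlled part of $\frac1M\sum_{n<M}\vartheta_n\varphi(f^nx)$ is $\frac1M\sum_k\vartheta_{Nk}\psi(\sigma^k\omega)$. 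The remaining part is
\[
R(x)=\frac1M\sum_{j=1}^{N-1}\sum_k\vartheta_{Nk+j}\,\varphi(f^{Nk+j}x).
\]

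The main obstacle is controlling $R(x)$. The points $f^{Nk+j}x$ with $j\neq0$ may return to $\Lambda$ or come arbitrarily close to it, and $\pi$ gives no information about them. My plan is to kill $R$ by a difference argument. Suppose, for contradiction, that $\vartheta$ is orthogonal to $(X,f)$. Then for every pair of points $x,y\in\Lambda$ and every $\varphi$ the difference of the two averages tends to $0$. I would choose two codings $\omega,\omega'$ that agree on all blocks but differ in the $0$-th symbol exactly at times $k$ where $\vartheta_{Nk}$ is large and positive. I would then try to show that their lifts can be chosen so that the uncontrolled terms $R$ match up to $o(1)$. For this I would use that $\pi^{-1}$ of a cylinder is a compact set which is $f^N$-forward invariant in the appropriate sense, and apply a Baire/compactness argument on the product $\Lambda\times\Lambda$ to pick $x,y$ realising nearly equal $R$-values along a subsequence of $M$'s. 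If exact matching fails, the fallback is averaging. I would lift a product (non-invariant) Bernoulli measure with coordinate biases $p_k$ depending on the sign of $\vartheta_{Nk}$ to a Borel probability on $\Lambda$ via a measurable section of $\pi$. Integrating the averages and using dominated convergence, orthogonality would force
\[
\frac1M\sum_k\vartheta_{Nk}(p_k-q_k)+\bigl(\text{$R$-integrals}\bigr)\to0
\]
for two such choices $p,q$ sharing the same $R$-integrals. This contradicts the non-triviality of $\vartheta^{(0)}$.

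Finally I would assemble the pieces. The resulting contradiction shows that orthogonality fails, that is, there exist $\varphi\in C(X)$ and $x\in X$ with $\limsup_{M}\frac1M\bigl|\sum_{n<M}\vartheta_n\varphi(f^nx)\bigr|>0$. Hence $(X,f)$ is Bohr chaotic. I expect the decoupling of the residues $j\neq0$ in the previous paragraph to be where essentially all the work lies. Everything else is a routine reduction to the symbolic Bohr chaoticity of the full shift used in \cite{Fan-Fan-Ryzhikov-Shen2022}.
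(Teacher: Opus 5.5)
\textbf{There is a genuine gap.} You have located the difficulty correctly, but the proposal does not resolve it. Nothing controls the cross terms $R(x)$ coming from times $n\not\equiv r \pmod N$, and neither proposed mechanism works. The lifts of a coding are non-canonical, and $\Phi$ is an arbitrary Tietze extension whose values on $f^j(\Lambda)$, $j\neq 0$, carry no information from $\pi$. So there is no reason that lifts of $\omega,\omega'$ can be chosen with matching $R$. Nor is there a reason that two Bernoulli measures lifted through a Borel section have equal $R$-integrals, since these depend on the biases through the section and through $\Phi$ off $\Lambda$.

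In fact the cross terms can cancel the main term identically. Take $X=\{0,1\}^{\mathbb N}$, $f=\sigma$, $N=2$, $\Lambda=\{x: x_{2k}=x_{2k+1}\text{ for all }k\}$, $\pi(x)_k=x_{2k}$, $\psi(\omega)=\omega_0$ with the legitimate extension $\Phi(x)=x_0$, and $\vartheta_n=(-1)^n$.
\begin{itemize}
\item For every $x\in\Lambda$, $\sum_{n<2K}\vartheta_n\Phi(\sigma^n x)=\sum_{k<K}(x_{2k}-x_{2k+1})=0$.
\item For $x'$ with $\sigma x'\in\Lambda$ (the other residue), the partial sums stay bounded.
\end{itemize}
So for every point your construction can produce, $R$ equals minus the main term up to $O(1/M)$. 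Every difference and every averaged version then vanishes too, and no contradiction with orthogonality can be extracted. Here $\pi|_\Lambda$ is even injective, so the failure is not about choosing lifts; it comes from $\Phi$ on $\sigma(\Lambda)$. Since $\Lambda\cap\sigma(\Lambda)=\{0^\infty,1^\infty\}\neq\emptyset$, you cannot simply redefine $\Phi$ to vanish on $\sigma(\Lambda)$ without first shrinking $\Lambda$. For a general semi-horseshoe, $f^j(\Lambda)$ may overlap $\Lambda$ heavily, which is exactly why the question was open.

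The paper never estimates $R$. It removes it by manufacturing disjoint iterates before choosing any test function:
\begin{enumerate}
\item It passes to $B_0=\pi^{-1}(\Xi_M)$ with $N\mid M$, where $\Xi_M$ is a disjoint-step $M$-order horseshoe of the full shift (Lemma~\ref{new-lemma3.5}). This gives $B_0\cap f^{Nl}(B_0)=\emptyset$ for $1\le l\le M-1$.
\item While $\rho(B\cap f^q(B))$ is the whole shift for some $q\in Q$, it replaces $B$ by $B\cap f^q(B)$. This must stop, because $m_q$ cuts at the same $q$ would force $B\subset B_0\cap f^{N\bar k}(B_0)=\emptyset$ (Lemma~\ref{new-lemma3.11}).
\item At the end every $Y_q(B)$ is a proper closed $\sigma$-invariant set, hence nowhere dense, so some cylinder avoids all of them. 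A disjoint-step horseshoe inside that cylinder lifts to a set $E$ with first return time $\tau L$ (Lemma~\ref{new-lemma3.8}).
\item Only then is the test function chosen, vanishing on $f^i(E)$ for $0<i<\tau L$, which makes the cross terms identically zero (Lemma~\ref{new-lemma3.2}).
\end{enumerate}
Your plan needs a step of this kind. Without it, as the example shows, the decoupling can fail for the test functions you propose.
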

The proof is entirely topological. It does not use the displacement machinery developed in \cite{Fan-Fan-Ryzhikov-Shen2022} for genuine horseshoes. Instead, the central idea is to pass from the given $N$-order semi-horseshoe to a higher-order prototype whose $N$-spaced levels are disjoint, then iteratively cut away the remaining overlap times until a first-return set appears. 

There are many dynamical systems with positive entropy having a semi-horseshoe, including graph maps \cite{Llibre-Misiurewicz-1993}, automorphisms of compact metric abelian groups \cite{Huang-Li-Xu-Ye}, $C^1$ partially hyperbolic diffeomorphisms \cite{Huang-Li-Xu-Ye}, dynamical systems with either the shadowing property \cite{DOT} or the modified almost specification property (Theorem \ref{Theorem 5.1}) and so on. For these dynamical systems, we can use Theorem \ref{Theorem A} to show that they are Bohr chaotic. In fact, we can show that the set of points correlated with any given non-trivial weight has positive topological entropy, see Theorem \ref{new-theorem8.1}.

\subsection{Full-entropy abundance}
In the second part of this paper, we consider the full-entropy abundance under two major orbit-tracing mechanisms in topological dynamics: the shadowing property and specification-like properties. More precisely, we show that positive topological entropy, together with either the shadowing property or the modified almost specification property, forces the set of points correlated with any given non-trivial weight has full topological entropy, which is substantially stronger than mere existence of a correlated orbit. For background on the shadowing property and specification-like properties, we refer to \cite{Kwietniak-Lacka-Oprocha-2016}.

For a non-trivial weight $\vartheta$, let $\mathcal{N}_\vartheta(f,X)$ denote the set of points $x\in X$ for which
\[
\limsup_{N\to\infty}\frac{1}{N}
\left|
\sum_{n=0}^{N-1}\vartheta_n\varphi(f^n x)
\right|>0
\]
for some $\varphi\in C(X)$. Thus $\mathcal{N}_\vartheta(f,X)$ is precisely the set of points correlated with the weight $\vartheta$. Given a subset $Z\subset X$, let $h_{\mathrm{top}}(f,Z)$ denote the  topological entropy of $Z$ introduced by Bowen \cite{Bowen1973}.

We formulate our results in a cyclic decomposition setting:
\[
X=\bigcup_{i=0}^{L-1}f^i(Y),
\]
where $(Y,f^L)$ has positive entropy and satisfies the relevant orbit-tracing property. This formulation allows us to treat not only systems that themselves satisfy shadowing or modified almost specification, but also situations in which these properties arise naturally after passing to a suitable iterate and invariant component.

We first state the shadowing version.

\begin{maintheorem}\label{Theorem B}
	Let $(X,f)$ be a dynamical system, and let $L\in\mathbb{N}^+$. Assume that there exists a subsystem $(Y,f^L)$ of $(X,f^L)$ such that
	\begin{enumerate}[(1)]
		\item $(Y,f^L)$ satisfies the shadowing property and $h_{\mathrm{top}}(f^L,Y)>0$;
		\item $X=\bigcup_{i=0}^{L-1}f^i(Y)$.
	\end{enumerate}
	Then for any non-trivial weight $\vartheta\in \ell^\infty(\mathbb{N})$, we have
	\[
	h_{\mathrm{top}}(f,\mathcal{N}_\vartheta(f,X))
	=
	h_{\mathrm{top}}(f,X)
	=
	\frac{1}{L}h_{\mathrm{top}}(f^L,Y)>0.
	\]
	In particular, $(X,f)$ is Bohr chaotic.
\end{maintheorem}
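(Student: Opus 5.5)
The equalities $h_{\mathrm{top}}(f,X)=\frac1L h_{\mathrm{top}}(f^L,Y)$ are routine. Since $X=\bigcup_{i<L}f^i(Y)$ and $f^i$ does not increase Bowen entropy of $f^L$-invariant pieces, we get $h_{\mathrm{top}}(f^L,X)=\max_i h_{\mathrm{top}}(f^L,f^iY)=h_{\mathrm{top}}(f^L,Y)$. Together with $h_{\mathrm{top}}(f^L,\cdot)=L\,h_{\mathrm{top}}(f,\cdot)$ this gives the formula. The inequality $h_{\mathrm{top}}(f,\mathcal N_\vartheta(f,X))\le h_{\mathrm{top}}(f,X)$ is trivial. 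So the content is the lower bound: for every $\varepsilon>0$ I will construct a set $Z\subset Y\cap\mathcal N_\vartheta(f,X)$ with $h_{\mathrm{top}}(f^L,Z)\ge h_{\mathrm{top}}(f^L,Y)-\varepsilon$. Bowen entropy for $f$ is then at least $\frac1L$ of this.

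First I reduce the weight to the $f^L$-time scale. Since $\limsup\frac1N\sum|\vartheta_n|>0$, some residue class $r\in\{0,\dots,L-1\}$ carries positive upper density of $|\vartheta|$. Replacing $\vartheta$ by its positive or negative part on that class, I get a weight $\eta_k:=\vartheta_{kL+r}$ (up to sign) for $f^L$ that is still non-trivial. A test function $\varphi$ supported near a suitable piece of $f^r(Y)$ then lets us read correlation with $\vartheta$ off correlation of the $f^L$-orbit of $x\in Y$ with $\eta$. The contributions from the other residue classes are the main nuisance. I will handle them by choosing $\varphi$ adapted to two disjoint closed sets, as described below, so that the sums over the other classes are identical for two competing orbits and cancel in a difference.

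Main construction. I use shadowing plus positive entropy to build, for large $m$, two $f^L$-orbit segments of length $m$, $A$ and $B$, whose $\psi$-values differ by at least $\delta>0$ at every time, for some fixed $\psi\in C(Y)$. Concretely, I take two distinct periodic pseudo-orbits, or two separated points of a horseshoe obtained from shadowing and entropy as in \cite{DOT}. I also take a large $(m,\epsilon)$-separated set $E_m$ of cardinality about $e^{m(h-\varepsilon)}$, where $h=h_{\mathrm{top}}(f^L,Y)$.

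Points of $Z$ are obtained by shadowing pseudo-orbits that concatenate blocks from $E_m$ (supplying entropy) with sparse long blocks in which we follow $A$ or $B$. On a sparse but increasing family of time windows $[N_j,2N_j]$ with $\frac1{N_j}\sum_{N_j\le k<2N_j}|\eta_k|\ge c>0$, the choice between $A$ and $B$ is made coordinate by coordinate according to the sign of $\eta_k$. This forces $\sum\eta_k\psi(f^{kL}x)$ to be at least $c\delta N_j/2$ relative to the mixed alternative. Hence, for each $x\in Z$, at least one of the two pieces built this way (a fixed choice) yields $\limsup\frac1N|\sum\eta_k\psi(f^{kL}x)|>0$.

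Because the windows occupy a density-zero set of times, the entropy of the resulting Moran-type set is at least $h-2\varepsilon$. This follows from the standard entropy distribution principle.

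The step I expect to be the main obstacle is the coordinate-wise steering. Shadowing only allows concatenation with a fixed transition cost, whereas following the sign of $\eta_k$ at every coordinate needs a two-symbol full-shift structure for $f^{L}$, or some iterate $f^{LM}$, with $\psi$ separating the symbols. I will extract this from shadowing and positive entropy as a genuine horseshoe for $f^{LM}$ in the sense of \cite{DOT}, using expansivity on the pseudo-orbit level. I then apply the sign-following argument along one residue class mod $M$ that carries positive upper density of $\eta$, with the correlation function taken to separate the two horseshoe cylinders. Bohr chaoticity then follows because $\mathcal N_\vartheta(f,X)\neq\emptyset$.
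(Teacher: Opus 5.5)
Your identity $h_{\mathrm{top}}(f,X)=\frac1L h_{\mathrm{top}}(f^L,Y)$ is fine. The lower bound, however, has a genuine gap, and it sits exactly at the step you call a nuisance: the terms $\vartheta_n\varphi(f^nx)$ at times $n$ outside the controlled residue class.

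The sets $f^i(Y)$ are not assumed disjoint; they may all equal $X$. So a $\varphi$ supported near a piece of $f^r(Y)$ isolates nothing. Your proposed cancellation in a difference is also unavailable. Two distinct shadowing orbits take different $\varphi$-values at uncontrolled times. Inside a window, the two competitors run through different segments ($A$ versus $B$) at the intermediate times of every block of length $LM$, and there their $\varphi$-values differ by amounts you cannot control.

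The missing ingredient is a compact $f^k$-invariant $\Lambda$ that factors onto a full shift and satisfies $f^i(\Lambda)\cap f^j(\Lambda)=\emptyset$ for $0\le i<j\le k-1$. With it, Lemma~\ref{Lemma 4.2} sets $\varphi=\psi\circ\pi$ on $\Lambda$ and $\varphi=0$ on $\bigcup_{i=1}^{k-1}f^i(\Lambda)$, extends by Tietze, and the other residue classes then contribute exactly zero. Building such a $\Lambda$ with $k$ large and $\frac1k\log m$ close to $h_{\mathrm{top}}(f,X)$ is the heart of the paper (Theorem~\ref{Theorem 4.1}). Each block begins with a marker orbit segment of a point $x_0$ whose return times satisfy $R_l(f;f^{il}x_0,\varepsilon_\star)>6Ll$; this comes from Lemma~\ref{Lemma C}, via the Rousseau--Varandas--Zhao return-time theorem and Furstenberg multiple recurrence. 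Separated points that shadow a shift of the marker are discarded, and this is what rules out every overlap $\Lambda\cap f^r(\Lambda)\neq\emptyset$. Nothing in your plan does this job.

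Your fallback does not close the gap. Shadowing plus positive entropy does not give a genuine horseshoe. For example, $\sigma\times g$ on $\{0,1\}^{\mathbb N}\times G$, with $g$ an odometer, has shadowing and entropy $\log 2$. Yet $(\sigma\times g)^N$ has no fixed points, so no subsystem of it is conjugate to a full shift. \cite{DOT} gives only semi-horseshoes, and their possibly overlapping iterates are precisely the difficulty above. Even for genuine horseshoes, Fan--Fan--Ryzhikov--Shen needed extra displacement arguments.

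Separately, windows $[N_j,2N_j]$ have upper density at least $\frac12$, not zero. With zero-entropy windows of that size, the entropy distribution principle, which needs measure bounds at every large time, yields only about $h/2$, not $h-2\varepsilon$. The paper avoids windows altogether. Once $\Lambda$ is available, Lemma~\ref{Lemma A} lets the sign of the weight choose only the parity class of the alphabet. This costs $\log 2$ per block of length $k$, which is negligible because $k$ is large.
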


\begin{remark}\label{Remark 1.1}
	Under the assumptions of Theorem \ref{Theorem B}, if we further assume that
	$f^i(Y)\cap f^j(Y)=\emptyset$ for any $0\leq i<j\leq L-1$, then one can show
	that $(X,f)$ itself has the shadowing property. Without this additional
	assumption, however, we cannot guarantee that $(X,f)$ has the shadowing
	property. See Section \ref{Section 7} for details.
\end{remark}

In particular, if $(X,f)$ itself satisfies the shadowing property and $h_{\mathrm{top}}(f,X)>0$, then
\[
h_{\mathrm{top}}(f,\mathcal{N}_\vartheta(f,X))=h_{\mathrm{top}}(f,X)>0
\]
for every non-trivial weight $\vartheta\in \ell^\infty(\mathbb N)$. Thus, in the presence of shadowing, positive entropy alone is enough to force not only Bohr chaoticity but also full-entropy abundance of points correlated with $\vartheta$.

For dynamical systems satisfying specification-like properties, to treat weak specification, backward weak specification, and almost specification within a single framework, we work with the modified almost specification property; see Figure \ref{fig-1} and Section \ref{Section 2.3} for the relations with other specification-like notions and for precise definitions. In the same cyclic decomposition setting, we show that positive entropy together with the modified almost specification property on $(Y,f^L)$ also forces full-entropy abundance of points correlated with each non-trivial weight.

\begin{figure}[htbp]
	\begin{center}
		\tikzset{every picture/.style={line width=0.75pt}}
		\begin{tikzpicture}[
			>=latex,
			box/.style={
				draw,
				rounded corners=4pt,
				minimum width=3.1cm,
				minimum height=0.9cm,
				inner sep=3pt,
				align=center
			},
			arrow/.style={
				->,
				line width=0.8pt
			}
			]
			
			\node[box] (weak) at (0,0) {(forward/almost)\\weak specification};
			\node[box] (spec) at (-3.8,0) {specification};
			\node[box] (moda) at ( 3.8,0) {modified almost\\specification};
			
			\node[box] (almost) at (0, 1.6) {almost specification};
			\node[box] (bweak)  at (0,-1.6) {backward weak\\specification};
			
			\draw[arrow] (spec.east) -- (weak.west);
			\draw[arrow] (weak.east) -- (moda.west);
			
			\draw[arrow] (spec.north east) .. controls (-2.2,1.2) .. (almost.west);
			\draw[arrow] (spec.south east) .. controls (-2.2,-1.2) .. (bweak.west);
			\draw[arrow] (almost.east)     .. controls ( 2.2,1.2) .. (moda.north west);
			\draw[arrow] (bweak.east)      .. controls ( 2.2,-1.2) .. (moda.south west);
		\end{tikzpicture}
	\end{center}
	\caption{Relationships between specification-like properties}
	\label{fig-1}
\end{figure}
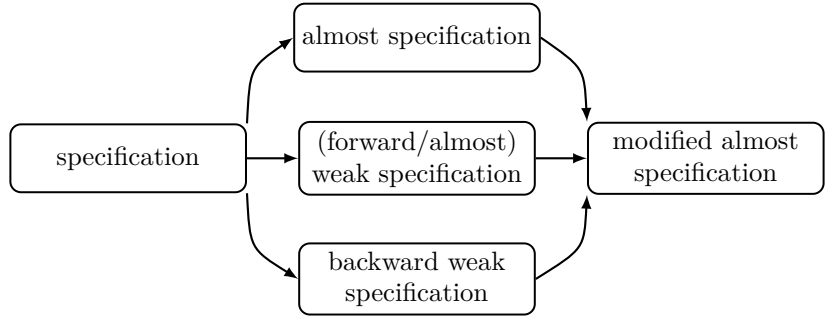

\begin{maintheorem}\label{Theorem C}
	Let $(X,f)$ be a dynamical system, and let $L\in\mathbb{N}^+$. Assume that there exists a subsystem $(Y,f^L)$ of $(X,f^L)$ such that
	\begin{enumerate}[(1)]
		\item $(Y,f^L)$ satisfies the modified almost specification property and $h_{\mathrm{top}}(f^L,Y)>0$;
		\item $X=\bigcup_{i=0}^{L-1}f^i(Y)$.
	\end{enumerate}
	Then for any non-trivial weight $\vartheta\in \ell^\infty(\mathbb{N})$, we have
	\[
	h_{\mathrm{top}}(f,\mathcal{N}_\vartheta(f,X))
	=
	h_{\mathrm{top}}(f,X)
	=
	\frac{1}{L}h_{\mathrm{top}}(f^L,Y)>0.
	\]
	In particular, $(X,f)$ is Bohr chaotic.
\end{maintheorem}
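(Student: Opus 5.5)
The plan is to prove the entropy identities first and then build a large subset of $\mathcal N_\vartheta(f,X)$. The equality $h_{\mathrm{top}}(f,X)=\frac1L h_{\mathrm{top}}(f^L,Y)$ comes from standard facts. Since $X=\bigcup_{i<L} f^i(Y)$ and each $f^i(Y)$ is $f^L$-invariant and a factor of $(Y,f^L)$ via $f^i$, we get $h_{\mathrm{top}}(f^L,X)=\max_i h_{\mathrm{top}}(f^L,f^i(Y))=h_{\mathrm{top}}(f^L,Y)$. Together with $h_{\mathrm{top}}(f^L,X)=L\,h_{\mathrm{top}}(f,X)$ this gives the identity. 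Monotonicity then yields the upper bound $h_{\mathrm{top}}(f,\mathcal N_\vartheta(f,X))\le h_{\mathrm{top}}(f,X)$. Moreover, for $Z\subset Y$ Bowen entropy satisfies $h_{\mathrm{top}}(f,Z)=\frac1L h_{\mathrm{top}}(f^L,Z)$, because $f$ is uniformly continuous and Bowen balls for $f$ and $f^L$ are comparable. So it suffices to find, for each $\varepsilon>0$, a set $Z\subset Y\cap\mathcal N_\vartheta(f,X)$ with $h_{\mathrm{top}}(f^L,Z)\ge h_{\mathrm{top}}(f^L,Y)-\varepsilon$.

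Next I reduce the weight to the iterate $f^L$. Since $\vartheta$ is non-trivial, some residue class $r\in\{0,\dots,L-1\}$ gives a non-trivial weight $\eta_m:=\vartheta_{mL+r}$ for $(Y,f^L)$. For $x\in Y$ and $\varphi\in C(X)$, split $\sum_n\vartheta_n\varphi(f^nx)$ into its $L$ residue classes. The aim is to choose $\psi\in C(Y)$ and points $x$ such that one residue sum is large and the rest are controlled. A cleaner route is to apply the construction with $\varphi$ built so that $\varphi\circ f^i$ restricted to $Y$ realizes a prescribed function. This is awkward when the sets $f^i(Y)$ overlap. Instead, I will handle all residues at once with a single function $\psi=\varphi|_Y$ and use the freedom in $x$. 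The orbit points $f^{mL}x$ are chosen through shadowing or specification, while the points $f^{mL+i}x$ are then determined. I make the expression $\frac1N\sum_m\big(\sum_i\vartheta_{mL+i}\varphi(f^{i}f^{mL}x)\big)$ large by choosing blocks of the $f^L$-orbit near one of two distinguished points $p,q\in Y$. The function $G(y)=\sum_i c_i\varphi(f^iy)$ separates $p$ from $q$ for an open set of coefficient vectors $c$. This reduces everything to a vector-weight version of the following core lemma.

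Core lemma for $(Y,g)$ with $g=f^L$, shadowing (resp. modified almost specification) and positive entropy. Fix a weight $\eta$, possibly vector-valued with bounded entries, that is non-trivial. Fix also $\varepsilon>0$. Then there is a set $Z$ with $h(g,Z)\ge h(g,Y)-\varepsilon$ such that each $x\in Z$ is correlated. The construction goes as follows. By positive entropy and shadowing, following \cite{DOT} and Theorem \ref{Theorem 5.1}, there is a semi-horseshoe for some $g^M$. There are also many $(n,\delta)$-separated orbit segments carrying entropy close to $h(g,Y)$, obtained from an ergodic measure of near-maximal entropy via Katok's formula. Choose a sparse increasing sequence of times $N_k$ with $N_{k+1}\gg N_k$. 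On a long window $[N_k,N_{k+1}')$ where $\frac1N\sum|\eta|$ is at least $c>0$, glue in orbit pieces that follow the semi-horseshoe. These pieces realize a symbolic sequence adapted to the sign pattern of $\eta$. On the remaining windows, glue arbitrary high-entropy segments. Shadowing or specification produces actual points. Theorem \ref{Theorem A}'s mechanism, choosing symbols so that $\varphi$ takes values about $\pm1$ matching $\operatorname{sgn}\eta_n$, forces $\frac1N|\sum\eta_n\varphi|\ge c/2$ along $N=N_{k+1}'$. The high-entropy free windows occupy a proportion of time tending to $1$, because the correlated windows need only a density of order $c$ at times $N_k$. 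Such windows can be placed at times where the weight has mass, with the free windows much longer in between. An entropy distribution principle (Moran-type set) then gives $h(g,Z)\ge h(g,Y)-\varepsilon$.

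The main obstacle is reconciling the two demands at the level of Bowen entropy. Entropy needs the free part to dominate asymptotically, while correlation needs a positive proportion of time up to $N$ spent tracking the weight. Since $\limsup$ is all that is needed for both, I plan to alternate. At scales $N_{k+1}'$ the correlated window comprises a fixed fraction $\theta$ of $[0,N_{k+1}']$. At the intermediate scales the free part dominates. The lower bound on the entropy of a Moran set uses the lower Bowen entropy, governed by $\liminf$, so a fixed fraction $\theta$ of forced time costs a factor $(1-\theta)$ of entropy. I therefore take $\theta$ small, which still gives correlation of order $\theta c$. Letting $\theta\to0$ and using $\varepsilon$ arbitrary then yields full entropy. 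For modified almost specification, the gluing errors are in a vanishing fraction of coordinates, which perturbs the averages by $o(1)$, and the rest is identical.
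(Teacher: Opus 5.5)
\textbf{There is a genuine gap at the central step.} You never produce a single $\varphi\in C(X)$ whose values you actually control along the correlated stretches, and this is the step the paper works hardest on.

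A semi-horseshoe $(E,f^K)$ governs the orbit only at times in one residue class mod $K$. At the other $K-1$ classes inside a correlated window, the orbit lies near $f^i(E)$, $0<i<K$. Unless these sets are disjoint from $E$, you cannot make $\varphi$ vanish there while keeping it $\pm1$ on $E$. Those times then contribute an uncontrolled amount of the same order as your gain. So ``choosing symbols so that $\varphi$ is about $\pm1$ matching $\operatorname{sgn}\eta_n$'' is not available for every $n$.

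Your $p,q$/vector-weight reduction does not help, for three reasons:
\begin{itemize}
\item You cannot prescribe at every step whether $g^mx$ is near $p$ or near $q$; such a sequence is not a pseudo-orbit unless $p$ and $q$ are close.
\item Switching only on long blocks cannot follow an oscillating sign pattern.
\item Passing to vector weights $(\vartheta_{mL+i})_{i<L}$ just moves the problem to the sets $f^t(E)$, $0\le t<K$, which may overlap.
\end{itemize}

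What is needed is the following:
\begin{itemize}
\item a semi-horseshoe with $E\subset Y$, $L\mid K$ and $f^i(E)\cap f^j(E)=\emptyset$ for $0\le i<j<K$, obtainable by the pruning of Section~\ref{Section 3} and Lemma~\ref{new-lemma3.8};
\item a residue class $s$ mod $K$ carrying weight mass;
\item $\varphi$ equal to $\pm1$ on $E$ according to the first symbol and $0$ on $f^i(E)$, $i\neq0$, extended by Tietze as in Lemma~\ref{Lemma 4.2}.
\end{itemize}

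Two remarks on where your semi-horseshoe comes from. Under modified almost specification, \cite{DOT} does not apply. Citing Theorem~\ref{Theorem 5.1} merely for a semi-horseshoe wastes it: it already gives pairwise disjoint iterates with $\frac1k\log m>\eta$ and $k$ large. Then Lemmas~\ref{Lemma 4.2}, \ref{Lemma 4.3} and~\ref{Lemma A} give $h_{\mathrm{top}}(f,\mathcal N_\vartheta(f,X))\ge\frac1k\log\lfloor m/2\rfloor>\eta$ directly, with no Moran set. That is exactly the paper's proof; its disjointness comes from the marker segment with large return times (Lemma~\ref{Lemma C-New}). If you want a semi-horseshoe without disjointness, you must build it yourself, for instance as in Claims 3--5 of that proof with the marker omitted.

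\textbf{The free windows are also uncontrolled.} Even with such a $\varphi$, $\varphi$ is fixed and the orbit on the free windows is arbitrary. So the free part of $\sum_{n<N}\vartheta_n\varphi(f^nx)$ can be as large as $(1-\theta)\|\vartheta\|_\infty\|\varphi\|N$, with either sign. This can swamp the roughly $\theta cN$ gained on the correlated window, so neither ``$\ge c/2$'' nor ``of order $\theta c$'' follows.

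To repair this you need three things:
\begin{itemize}
\item Choose the sign of each correlated window adaptively. Match $\operatorname{sgn}\vartheta_n$ on class $s$, or its negative, according to the sign of the template partial sum up to the start of the window. This is compatible with the tree structure of a Moran set.
\item Choose windows $W_k=[a_k,b_k)$ with $|W_k|\le\theta b_k$, $a_k\gg b_{k-1}$ and $\sum_{n\in W_k,\,n\equiv s}|\vartheta_n|\ge c'\theta^2b_k$. This follows by pigeonhole over subintervals of $[\theta N',N')$ of length $\theta^2N'$, with $\theta$ small relative to the mass of class $s$.
\item Use $(\delta,n,\varepsilon)$-separated free segments (Lemma~\ref{Lemma 2.4}) rather than Katok-type separated sets, so that specification mistakes do not destroy separation.
\end{itemize}

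With these repairs your route would be a genuinely different proof. It uses a positive-entropy disjoint semi-horseshoe only for correlation and obtains the entropy from sparse correlated windows inside a high-entropy Moran set. This trades the multiple-recurrence marker construction for an entropy-distribution argument. As written, however, both steps above are missing.
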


\begin{remark}\label{Remark 1.2}
	We cannot guarantee that $(X,f)$ in Theorem \ref{Theorem C} itself satisfies
	the modified almost specification property; see Theorem \ref{Theorem 6.2}.
	When $(Y,f^L)$ satisfies the specification property, it was shown in
	\cite[Theorem 4.2.3]{Tal2023} that $(X,f)$ is Bohr chaotic.
\end{remark}

Likewise, if $(X,f)$ itself satisfies the modified almost specification property and $h_{\mathrm{top}}(f,X)>0$, then the same full-entropy conclusion holds for every non-trivial weight $\vartheta\in \ell^\infty(\mathbb N)$. In this sense, Bohr chaoticity follows from positive entropy under two broad orbit-tracing mechanisms: shadowing and modified almost specification.

We now highlight three representative consequences of Theorems \ref{Theorem B} and \ref{Theorem C}. These examples show that the Bohr-chaotic abundance phenomenon established above appears naturally in algebraic dynamics, smooth dynamics, and $C^0$-generic topological dynamics. Further applications are deferred to Section \ref{Section 8}.

We first consider the algebraic setting. Let $G$ be a compact metric abelian
group and let $f$ be an automorphism of $G$. Recall that a point $x\in G$ is positively homoclinic if
\[
\lim_{n\to\infty} f^n(x)=e_G,
\]
where $e_G$ is the identity element of $G$. Denote by $\Delta_f^+(G)$ the set
of all positively homoclinic points. In the proof of \cite[Theorem 1.4]{Huang-Li-Xu-Ye}, Huang, Li, Xu and Ye proved that
$\bigl(\overline{\Delta_f^+(G)},f\bigr)$ has the (almost) weak specification property and that
\[
h_{\mathrm{top}}\bigl(f,\overline{\Delta_f^+(G)}\bigr)>0
\]
whenever $h_{\mathrm{top}}(f,G)>0$. Therefore Theorem \ref{Theorem C} yields the following
consequence.

\begin{maincorollary}
	Let $G$ be a compact metric abelian group, and let $f$ be an automorphism of
	$G$. Suppose that $h_{\mathrm{top}}(f,G)>0$. Then for every non-trivial weight
	$\vartheta\in \ell^\infty(\mathbb N)$,
	\[
	h_{\mathrm{top}}\bigl(f,\mathcal N_\vartheta(f,G)\bigr)
	\ge h_{\mathrm{top}}\bigl(f,\overline{\Delta_f^+(G)}\bigr)>0.
	\]
	In particular, $(G,f)$ is Bohr chaotic.
\end{maincorollary}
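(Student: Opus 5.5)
The plan is to apply Theorem \ref{Theorem C} to the subsystem $(Y,f)$ with $Y:=\overline{\Delta_f^+(G)}$ and $L=1$, and then pass from $Y$ to $G$ using monotonicity of Bowen entropy and of the correlated set.

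\emph{Step 1: $Y$ is a subsystem.} Since $f$ is an automorphism, $f(e_G)=e_G$. If $f^n(x)\to e_G$, then $f^n(f(x))=f^{n+1}(x)\to e_G$ and $f^n(f^{-1}x)=f^{n-1}(x)\to e_G$. Hence $\Delta_f^+(G)$ is an $f$-invariant subgroup with $f(\Delta_f^+(G))=\Delta_f^+(G)$. By continuity of $f$ and compactness, $Y$ is a closed set with $f(Y)=Y$, so $(Y,f)$ is a subsystem of $(G,f)$.

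\emph{Step 2: apply Theorem \ref{Theorem C}.} By the cited result from the proof of \cite[Theorem 1.4]{Huang-Li-Xu-Ye}, $(Y,f)$ has the weak specification property (or almost weak specification, the variant in Figure \ref{fig-1}), and $h_{\mathrm{top}}(f,Y)>0$ whenever $h_{\mathrm{top}}(f,G)>0$. By Figure \ref{fig-1} and Section \ref{Section 2.3}, weak specification implies the modified almost specification property. Theorem \ref{Theorem C} with $X=Y$ and $L=1$ then gives, for every non-trivial weight $\vartheta$,
\[
h_{\mathrm{top}}\bigl(f,\mathcal N_\vartheta(f,Y)\bigr)=h_{\mathrm{top}}(f,Y)>0 .
\]

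\emph{Step 3: transfer from $Y$ to $G$.} We claim $\mathcal N_\vartheta(f,Y)\subset\mathcal N_\vartheta(f,G)$. Take $x\in\mathcal N_\vartheta(f,Y)$, witnessed by some $\psi\in C(Y)$. By Tietze's extension theorem, $\psi$ extends to some $\varphi\in C(G)$. Since the orbit of $x$ stays in $Y$, the weighted averages of $\varphi$ along it coincide with those of $\psi$, so $x\in\mathcal N_\vartheta(f,G)$.

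Bowen's entropy of subsets is monotone, and the entropy of $Z\subset Y$ computed in $Y$ equals the one computed in $G$, because Bowen balls in $Y$ are traces of Bowen balls in $G$ with the same metric. Therefore
\[
h_{\mathrm{top}}\bigl(f,\mathcal N_\vartheta(f,G)\bigr)\ge h_{\mathrm{top}}\bigl(f,\mathcal N_\vartheta(f,Y)\bigr)=h_{\mathrm{top}}(f,Y)>0 .
\]
In particular $\mathcal N_\vartheta(f,G)\neq\emptyset$ for every non-trivial weight $\vartheta$, so $(G,f)$ is Bohr chaotic.

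The only step that needs real care is confirming that the specification-type property obtained in \cite{Huang-Li-Xu-Ye} is among those in Figure \ref{fig-1}, so that it implies modified almost specification. If instead it is an "almost weak specification" with a sublinear gap function, we would check it against the definition in Section \ref{Section 2.3}. The modified almost specification property is designed to absorb exactly such gap and mistake functions, so this check should go through.
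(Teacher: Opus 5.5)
Your proposal is correct and follows the paper's route: the corollary is exactly Theorem~\ref{Theorem C} with $L=1$ applied to $(\overline{\Delta_f^+(G)},f)$, using the (almost) weak specification and positive entropy from \cite{Huang-Li-Xu-Ye} together with Lemma~\ref{Lemma 2.10-New}, followed by the transfer to $G$. That transfer consists of the Tietze inclusion $\mathcal N_\vartheta(f,\overline{\Delta_f^+(G)})\subset\mathcal N_\vartheta(f,G)$ and the ambient-independence of Bowen entropy, which the paper leaves implicit and you spell out. One minor point: if Bowen balls may be centred anywhere in $G$, your trace observation only shows that the entropy of $Z:=\mathcal N_\vartheta(f,\overline{\Delta_f^+(G)})$ computed in $G$ is at most the one computed in $Y$, which is the wrong direction; the inequality you actually need comes from recentring each ball $B_u(f;x,\sigma)$ that meets $Z$ at some $y\in Z$, so that $B_u(f;x,\sigma)\subset B_u(f;y,2\sigma)$, and then letting $\sigma\to0$.
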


We next turn to smooth dynamics. In the $C^{1+\alpha}$ setting, Bohr chaoticity itself is already known from the existence of hyperbolic horseshoes and the horseshoe criterion for Bohr chaoticity. The novelty here is the entropy estimate for the correlated set: every hyperbolic ergodic measure with positive entropy yields a lower bound, and in the surface case this set even has full topological entropy.

\begin{maincorollary}\label{Corollary B}
	Let $M$ be a compact connected boundaryless smooth Riemannian manifold, and let
	$f:M\to M$ be a $C^{1+\alpha}$ diffeomorphism with $\alpha>0$. Suppose that
	$\mu\in \mathcal M_f^e(M)$ is hyperbolic and satisfies $h_\mu(f)>0$. Then for
	every non-trivial weight $\vartheta\in \ell^\infty(\mathbb N)$,
	\[
	h_{\mathrm{top}}\bigl(f,\mathcal N_\vartheta(f,M)\bigr)\ge h_\mu(f)>0.
	\]
	If in addition $\dim(M)=2$ and $h_{\mathrm{top}}(f,M)>0$, then
	\[
	h_{\mathrm{top}}\bigl(f,\mathcal N_\vartheta(f,M)\bigr)
	= h_{\mathrm{top}}(f,M)>0.
	\]
\end{maincorollary}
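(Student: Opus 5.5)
The argument uses Katok's horseshoe theorem to produce, for each small $\varepsilon>0$, a set on which Theorem \ref{Theorem B} applies, and then lets $\varepsilon\to 0$.

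\textbf{Step 1: a Katok horseshoe.} Fix $\varepsilon>0$. Since $f$ is $C^{1+\alpha}$ and $\mu$ is a hyperbolic ergodic measure with $h_\mu(f)>0$, Katok's horseshoe theorem gives a compact $f$-invariant hyperbolic set $\Lambda_\varepsilon\subset M$ with three properties:
\begin{enumerate}[(i)]
\item $h_{\mathrm{top}}(f,\Lambda_\varepsilon)\ge h_\mu(f)-\varepsilon$;
\item there is $L\in\mathbb N^+$ and a compact set $Y\subset\Lambda_\varepsilon$ with $f^L(Y)=Y$ and $\Lambda_\varepsilon=\bigcup_{i=0}^{L-1}f^i(Y)$;
\item $f^L|_Y$ is topologically conjugate to a two-sided full shift on finitely many symbols.
\end{enumerate}
In particular $(Y,f^L)$ has the shadowing property, since full shifts do and shadowing is invariant under conjugacy. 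It also satisfies $h_{\mathrm{top}}(f^L,Y)=L\,h_{\mathrm{top}}(f,\Lambda_\varepsilon)>0$ for $\varepsilon<h_\mu(f)$.

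\textbf{Step 2: apply Theorem \ref{Theorem B} on $\Lambda_\varepsilon$.} Theorem \ref{Theorem B} applied to the system $(\Lambda_\varepsilon,f)$ gives, for every non-trivial weight $\vartheta$,
\[
h_{\mathrm{top}}\bigl(f,\mathcal N_\vartheta(f,\Lambda_\varepsilon)\bigr)=h_{\mathrm{top}}(f,\Lambda_\varepsilon).
\]
Next we need $\mathcal N_\vartheta(f,\Lambda_\varepsilon)\subset\mathcal N_\vartheta(f,M)$. By the Tietze extension theorem, any $\varphi\in C(\Lambda_\varepsilon)$ extends to some $\tilde\varphi\in C(M)$. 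For $x\in\Lambda_\varepsilon$ the orbit stays in $\Lambda_\varepsilon$, so the weighted averages of $\tilde\varphi$ and $\varphi$ along it coincide. Bowen entropy is monotone in the set and is computed with the ambient metric, so
\[
h_{\mathrm{top}}\bigl(f,\mathcal N_\vartheta(f,M)\bigr)\ge h_\mu(f)-\varepsilon.
\]
Letting $\varepsilon\to0$ gives the first claim.

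\textbf{Step 3: the surface case.} When $\dim M=2$ and $h_{\mathrm{top}}(f,M)>0$, the variational principle gives ergodic measures $\mu_k$ with $h_{\mu_k}(f)\to h_{\mathrm{top}}(f,M)$; we discard those with zero entropy. By Ruelle's inequality, an ergodic measure with positive entropy on a surface has one positive and one negative Lyapunov exponent (the negative one coming from applying Ruelle to $f^{-1}$), so it is hyperbolic. The first part then gives
\[
h_{\mathrm{top}}\bigl(f,\mathcal N_\vartheta(f,M)\bigr)\ge\sup_k h_{\mu_k}(f)=h_{\mathrm{top}}(f,M).
\]
The reverse inequality is trivial.

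\textbf{Main obstacle.} The delicate point is quoting Katok's theorem in a form that provides exactly the cyclic decomposition in (ii) and the conjugacy in (iii), since some versions state only the entropy estimate for an $f^L$-invariant piece. If only $h_{\mathrm{top}}(f^L,Y)\ge L(h_\mu(f)-\varepsilon)$ is available, we set $\Lambda_\varepsilon:=\bigcup_{i=0}^{L-1}f^i(Y)$ ourselves. This set is $f$-invariant because $f^L(Y)=Y$, and $h_{\mathrm{top}}(f,\Lambda_\varepsilon)=\tfrac1L h_{\mathrm{top}}(f^L,Y)$, so the argument goes through unchanged.
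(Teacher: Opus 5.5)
Your proposal is correct and follows essentially the paper's route: Katok's horseshoe approximation of $h_\mu(f)$, then Theorem~\ref{Theorem B} on the horseshoe together with $\mathcal N_\vartheta(f,\Lambda_\varepsilon)\subset\mathcal N_\vartheta(f,M)$ via Tietze, then Ruelle's inequality and the variational principle in dimension two. The only cosmetic difference is that the paper applies Theorem~\ref{Theorem B} with $L=1$ to the hyperbolic horseshoe itself, which has shadowing as a locally maximal hyperbolic set, whereas you use the cyclic decomposition $\Lambda_\varepsilon=\bigcup_{i=0}^{L-1}f^i(Y)$ with $(Y,f^L)$ conjugate to a full shift; both are valid.
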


Finally, we turn to $C^0$-generic topological dynamics. In contrast to the $C^1$ setting, where structural stability of Morse-Smale diffeomorphisms prevents Bohr chaotic maps from being always $C^1$-dense, the $C^0$ situation is markedly different.
Let $M$ be a compact smooth Riemannian manifold with dimension $\operatorname{dim}(M)\geq1$. Let $C^0(M)$ denote the space of all continuous self-maps of $M$, endowed with
the metric
\[
d_{C^0}(g_1,g_2):=\max_{x\in M} d(g_1(x),g_2(x)).
\]
Let $\mathcal H(M)$ denote the space of all homeomorphisms of $M$, endowed with
the metric
\[
d_{\mathcal H}(g_1,g_2):=\max\{d_{C^0}(g_1,g_2),
d_{C^0}(g_1^{-1},g_2^{-1})\}.
\]
For notational convenience, let $S(M)$ denote either $C^0(M)$ when
$\dim(M)\ge 1$ or $\mathcal H(M)$ when $\dim(M)\ge 2$. It is known that there exists a residual subset $\mathcal R\subset S(M)$ such
that every $f\in\mathcal R$ has infinite topological entropy \cite{Yano-1980} and satisfies the
shadowing property \cite{Ko\'scielniak-2005,Mazur-Oprocha-2013}. Therefore Theorem \ref{Theorem B} gives the following $C^0$-generic consequence.

\begin{maincorollary}
	Let $M$ be a compact smooth Riemannian manifold with dimension $\operatorname{dim}(M)\geq1$. Then there exists
	a residual subset $\mathcal R\subset S(M)$ such that for every
	$f\in \mathcal R$ and every non-trivial weight
	$\vartheta\in \ell^\infty(\mathbb N)$,
	\[
	h_{\mathrm{top}}\bigl(f,\mathcal N_\vartheta(f,M)\bigr)
	= h_{\mathrm{top}}(f,M)=\infty.
	\]
	In particular, every $f\in \mathcal R$ is Bohr chaotic.
\end{maincorollary}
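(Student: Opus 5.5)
The plan is to derive this corollary directly from Theorem \ref{Theorem B} with $L=1$ and $Y=M$, with the genericity input taken from the literature cited just before the statement. First, I will fix $S(M)$ and take
\[
\mathcal R=\mathcal R_1\cap\mathcal R_2,
\]
where $\mathcal R_1\subset S(M)$ is a residual set of maps with $h_{\mathrm{top}}(f,M)=\infty$ (Yano \cite{Yano-1980}), and $\mathcal R_2\subset S(M)$ is a residual set of maps with the shadowing property (Ko\'scielniak \cite{Ko\'scielniak-2005} for homeomorphisms, Mazur--Oprocha \cite{Mazur-Oprocha-2013} for continuous maps). Both $C^0(M)$ and $\mathcal H(M)$ are Baire spaces: $C^0(M)$ is complete, and $\mathcal H(M)$ with $d_{\mathcal H}$ is complete. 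So $\mathcal R$ is a countable intersection of residual sets and is therefore residual. One bookkeeping point needs checking: each cited result must hold in the specific space and dimension range covered by $S(M)$. For $C^0(M)$ this should be $\dim M\ge1$, and for $\mathcal H(M)$ it should be $\dim M\ge 2$. The latter restriction matters because Yano's infinite entropy fails for circle homeomorphisms, whose entropy is zero.

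Second, for $f\in\mathcal R$ I will apply Theorem \ref{Theorem B} with $L=1$ and $Y=X=M$. Hypothesis (2) is trivial. Hypothesis (1) holds because $f$ has shadowing and $h_{\mathrm{top}}(f,M)=\infty>0$. The theorem then gives $h_{\mathrm{top}}(f,\mathcal N_\vartheta(f,M))=h_{\mathrm{top}}(f,M)=\infty$ for every non-trivial weight $\vartheta$, and Bohr chaoticity follows.

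The main obstacle is that Theorem \ref{Theorem B} is stated as producing a finite positive number $\frac1L h_{\mathrm{top}}(f^L,Y)$. I must check that its proof remains valid when the entropy is infinite. The proof presumably works as follows: for each $\gamma<h_{\mathrm{top}}(f,X)$ it builds a subset of $\mathcal N_\vartheta$ with entropy at least $\gamma$, using shadowing to concatenate orbit segments that realize entropy close to $\gamma$ together with a semi-horseshoe structure carrying the correlation. I expect this argument to use only approximations from below, so letting $\gamma\to\infty$ should give the infinite case. Concretely, I will verify that each step invoked there, such as the variational principle, the existence of ergodic measures or separated sets with entropy above $\gamma$, and Katok-type estimates, is applied only at finite levels $\gamma$. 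If some step genuinely needs finiteness, I will instead use $f^k$: $h_{\mathrm{top}}(f^k,M)=\infty$, and I can extract finite-entropy shadowing subsystems, for example closed invariant sets built from shadowed pseudo-orbits of finitely many separated blocks. These would have arbitrarily large finite entropy and be of the form required in Theorem \ref{Theorem B} relative to a closed subsystem $X'\subset M$. Monotonicity $\mathcal N_\vartheta(f,X')\subset\mathcal N_\vartheta(f,M)$ then gives the lower bound $\ge\gamma$ for every $\gamma$. This monotonicity holds because continuous functions on $X'$ extend to $M$ by Tietze's theorem.
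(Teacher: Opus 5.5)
Your proof is essentially the paper's: intersect the residual set of maps with infinite entropy (Yano) with the residual set of maps with the shadowing property (Ko\'scielniak, Mazur--Oprocha) in the Baire space $S(M)$, then apply Theorem~\ref{Theorem B} with $L=1$ and $Y=M$. The paper does not address the infinite-entropy case, but your primary route handles it: the proof of Theorem~\ref{Theorem B} works at finite levels $0<\eta<h_{\mathrm{top}}(f,X)$ and lets $\eta$ increase, and the steps that mention $h_\mu$ (such as Lemma~\ref{Lemma C-New}) only use a positive lower bound, so they adapt routinely if $h_\mu=\infty$. Your fallback is therefore unnecessary, and it would be shaky anyway, because closed sets built from shadowed pseudo-orbits need not themselves have the shadowing property.
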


\begin{remark}
	When $M$ is the circle $S^1$ or the interval $[0,1]$, for any $f\in\mathcal{H}(M)$, one has $h_{\mathrm{top}}(f,M)=0$, see \cite[Theorem 7.14 and Corollary 7.14.1]{Walters-1982}, hence $(M,f)$ is not Bohr chaotic.
\end{remark}

We conclude this subsection with a brief discussion of the main ideas. The proofs of Theorems \ref{Theorem B} and \ref{Theorem C} follow a common strategy: we construct semi-horseshoes whose iterate length is sufficiently large, whose different iterates are pairwise disjoint, and whose entropy is arbitrarily close to that of the ambient system (Theorems \ref{Theorem 4.1} and \ref{Theorem 5.1}). In the shadowing case, the starting point is a high-entropy separated set as in \cite{DOT}. In the specification setting, Tal's marker construction \cite{Tal2023} provides an important source of inspiration. The need to impose a sufficiently large iterate length already appears in our analysis of the simpler case of full shifts; see Lemma \ref{Lemma A}.

The main difficulty is to achieve three requirements simultaneously: large entropy, pairwise disjoint iterates, and sufficiently long iterate length. To overcome this, we first construct separated sets with better recurrence control (Lemma \ref{Lemma B}), and then use Furstenberg's measure-theoretic multiple recurrence theorem to obtain special marker segments adapted to the shadowing property and the modified almost specification property (Lemma \ref{Lemma C-New}). Once these ingredients are in place, the proofs are completed by a careful control of the constants arising in the construction. Our techniques do not apply to systems satisfying only the vague specification property. Indeed, Can and Trilles \cite{Can-Trilles-2025} constructed a minimal dynamical system with positive topological entropy satisfying the vague specification property for which no semi-horseshoe exists.

\textbf{Organization of this paper.}
Section \ref{Section 2} collects the preliminaries on transitivity, shadowing,
specification-like properties, topological entropy of subsets, separated sets and
symbolic dynamics. Section \ref{Section 3} is devoted to the proof of Theorem \ref{Theorem A}. Section \ref{Section 4} proves
several key lemmas for the proofs of Theorems \ref{Theorem B} and \ref{Theorem C}, including Lemmas \ref{Lemma A}, \ref{Lemma B}, and
\ref{Lemma C-New}. Sections \ref{Section 5} and \ref{Section 6} are devoted to
the proofs of Theorems \ref{Theorem B} and \ref{Theorem C}, respectively.
Section \ref{Section 7} is a detailed explanation for Remarks \ref{Remark 1.1} and
\ref{Remark 1.2}. Finally, Section \ref{Section 8} presents
applications of the main results, including the proof of Corollary \ref{Corollary B}.

\section{Preliminaries}\label{Section 2}
\subsection{Topologically transitive dynamical systems}
We say that a dynamical system $(X,f)$ is \emph{topologically transitive}, if for any nonempty open subsets $U,V\subset X$, there exists $n\in\mathbb{N}$ such that $U\cap f^{-n}(V)\neq\emptyset$. Given $x\in X$ and $\varepsilon>0$, we denote $B(x,\varepsilon):=\{y\in X: d(x,y)<\varepsilon\}$. For a topologically transitive dynamical system, we have the following fact.
\begin{lemma}\label{Lemma 2.1}
	Suppose that $(X,f)$ is a topologically transitive dynamical system. Then for any $\mu\in\mathcal{M}(X)$, any $0<\delta<1$, any $\varepsilon>0$ and any $m$ points $\{x_i\}_{i=1}^{m}$ in $X$, there exist $E\subset X$ with $\mu(E)>\delta$ and $K\in\mathbb{N}$ such that for any $x\in E$ and any $1\leq i\leq m$, there exist $l_{0i},l_{i0}\in\mathbb{N}$ with $$0\leq l_{0i},l_{i0}\leq K$$ such that $$B(x,\varepsilon)\cap f^{-l_{0i}}(B(x_i,\varepsilon))\neq\emptyset\text{ and }B(x_i,\varepsilon)\cap f^{-l_{i0}}(B(x,\varepsilon))\neq\emptyset.$$
\end{lemma}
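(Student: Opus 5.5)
The plan is to first prove a pointwise version of the statement and then pass to a set of large measure by compactness and continuity of measure. Fix $\varepsilon>0$ and the points $x_1,\dots,x_m$.

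For a given point $x\in X$ and index $i$, we need some $n$ with $B(x,\varepsilon)\cap f^{-n}(B(x_i,\varepsilon))\neq\emptyset$. Topological transitivity applied to the nonempty open sets $U=B(x,\varepsilon)$ and $V=B(x_i,\varepsilon)$ gives such an $n$. Swapping the roles, $U=B(x_i,\varepsilon)$ and $V=B(x,\varepsilon)$, gives the reverse time. So for each $x$ the minimal such times $l_{0i}(x)$ and $l_{i0}(x)$ are finite, and
\[
K(x):=\max_{1\le i\le m}\max\{l_{0i}(x),l_{i0}(x)\}<\infty .
\]

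Next, define the sets
\[
E_K:=\{x\in X: K(x)\le K\}.
\]
These increase in $K$ and their union is all of $X$. To be safe about measurability, I would instead work with the condition "there exist $l_{0i},l_{i0}\le K$ witnessing nonempty intersections" and show the corresponding set is open, or at least contains an open set that grows to $X$.

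The route I would take is a slight shrinking of the radius. Suppose $y\in B(x,\varepsilon/2)\cap f^{-n}(B(x_i,\varepsilon))$. Then for every $x'$ with $d(x,x')<\varepsilon/2$ we have $y\in B(x',\varepsilon)$, so $x'$ satisfies the condition for radius $\varepsilon$. Similarly, if $z\in B(x_i,\varepsilon)$ with $f^n z\in B(x,\varepsilon/2)$, then $f^n z\in B(x',\varepsilon)$. Applying transitivity with radius $\varepsilon/2$ around $x$ therefore gives an open neighbourhood $B(x,\varepsilon/2)$ of $x$ on which the bound $K(x)$ computed at radius $\varepsilon/2$ works for radius $\varepsilon$.

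By compactness, finitely many balls $B(z_j,\varepsilon/2)$, $j=1,\dots,r$, cover $X$. Taking
\[
K=\max_j K_{\varepsilon/2}(z_j)
\]
then makes every $x\in X$ satisfy the conclusion. Hence we may take $E=X$, and $\mu(E)=1>\delta$ for any $\mu\in\mathcal M(X)$. This argument makes the measure and $\delta$ irrelevant, and also removes any measurability issue.

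The only delicate point is this uniformity step, namely the $\varepsilon/2$ shrinking that turns the pointwise bounds into a uniform one. The rest is direct from the definition of transitivity.
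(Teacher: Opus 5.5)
Your proof is correct, and it proves slightly more than the statement: you get one $K$ valid for every $x\in X$, so $E=X$ works and $\mu$, $\delta$ play no role.

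The paper argues differently. It fixes, for each $x$, transition times $l_{0i}(x),l_{i0}(x)$ given by transitivity. It then exhausts $X$ by the increasing sets $\{x:\max_i\max\{l_{0i}(x),l_{i0}(x)\}\le k\}$ and uses continuity of $\mu$ from below to choose $K$ so that the corresponding set has measure $>\delta$; that set is $E$. The measurability of these sets is never discussed.

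Your $\varepsilon/2$ shrinking replaces this measure exhaustion by compactness. A witness $y\in B(z_j,\varepsilon/2)$ with $f^ny\in B(x_i,\varepsilon)$, or $z\in B(x_i,\varepsilon)$ with $f^nz\in B(z_j,\varepsilon/2)$, remains a witness for every center $x'\in B(z_j,\varepsilon/2)$ by the triangle inequality. A finite subcover then gives the uniform bound, and no measurability question arises.

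The two routes are closer than they look. If the times are taken minimal, as you do, the paper's sets are open, because $\{x: B(x,\varepsilon)\cap W\neq\emptyset\}$ is open for any $W\subset X$. Apply this with $W=f^{-n}(B(x_i,\varepsilon))$ and $W=f^{n}(B(x_i,\varepsilon))$. The sets then form an increasing open cover of the compact space $X$, so one of them already equals $X$, which gives your conclusion without changing the radius.

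The paper's measure-dependent formulation is just the form in which the lemma is invoked later, in Lemma~\ref{Lemma B} for the transitive system $(S_\mu,f)$. Your stronger version implies it, with $E=S_\mu$ there.
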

\begin{proof}
	We fix $\mu\in\mathcal{M}(X)$, $0<\delta<1$ and $\varepsilon>0$. Since  $(X,f)$ is topologically transitive, for any $x\in X$ and any $1\leq i\leq m$, there exist $l_{0i}(x),l_{i0}(x)\in\mathbb{N}$ such that $$B(x,\varepsilon)\cap f^{-l_{0i}(x)}(B(x_i,\varepsilon))\neq\emptyset\text{ and }B(x_i,\varepsilon)\cap f^{-l_{i0}(x)}(B(x,\varepsilon))\neq\emptyset.$$ Then $$X=\bigcup_{k\geq0}^{\infty}\{x\in X:\max\{l_{0i}(x),l_{i0}(x)\}\leq k\}.$$ Since $\mu (X)=1$, we can find $K\in\mathbb{N}$ large enough such that $$\mu(\{x\in X:\max\{l_{0i}(x),l_{i0}(x)\}\leq K\})>\delta.$$ We define $$E:=\{x\in X:\max\{l_{0i}(x),l_{i0}(x)\}\leq K\}.$$ It can be checked that $E$ satisfies the requirements.
\end{proof}

\subsection{Shadowing property}
Fix $\varepsilon>0$ and $\delta>0$, a sequence $\{x_n\}_{n=0}^{\infty}$ in $X$ is said to be a \emph{$\delta$-pseudo orbit} if $d(f(x_i),x_{i+1})<\delta$ for all $i\in\mathbb{N}$. A point $x\in X$ is \emph{$\varepsilon$-tracing} a pseudo-orbit $\{x_n\}_{n=0}^{\infty}$ if $d(f^i(x),x_i)<\varepsilon$ for all $i\in\mathbb{N}$. 
\begin{definition}\label{Definition 2.2}
	A topological dynamical system $(X,f)$ is said to satisfy \emph{shadowing property} if for any $\varepsilon>0$, there is $\delta>0$ such that every $\delta$-pseudo orbit of $f$ is $\varepsilon$-traced by some point in $X$. 
\end{definition}

\subsection{Specification-like properties}\label{Section 2.3}
	Let us recall the definitions of the well-known (forward/almost) weak specification property, backward weak specification property and almost specification property. 
	\begin{definition}
		Given $\varepsilon_0>0$, a function $$g(n,\varepsilon):\mathbb{N}^+\times(0,\varepsilon_0]\rightarrow\mathbb{N}$$ is called a \emph{mistake function} if for any $m\in\mathbb{N}^+$ and $0<\varepsilon\leq\varepsilon_0$, we have $$g(m,\varepsilon)\leq g(m+1,\varepsilon)\text{ and }\lim_{n\to\infty}\frac{g(n,\varepsilon)}{n}=0.$$ For convenience, we set $g(n,\varepsilon)=g(n,\varepsilon_0)$ for any $\varepsilon>\varepsilon_0$.
	\end{definition}
	\begin{definition}
		We say that a dynamical system $(X,f)$ satisfies the \emph{(forward/almost) weak specification property} with \emph{mistake function} $g$ if for any $k\in\mathbb{N}^+$, any $x_1,\cdots,x_k\in X$ and any non-negative integers $a_1,\cdots, a_k$ and $b_1,\cdots,b_k$ with
		\begin{equation*}
			a_1\leq b_1<\cdots<a_k\leq b_k
		\end{equation*}
		and
		\begin{equation}\label{equ (2.1)}
			a_{i+1}-b_i\geq g(b_{i+1}-a_{i+1}+1,\varepsilon)
		\end{equation}
		for any $1\leq i\leq k-1$, there exists a point $z\in X$ such that
		\begin{equation*}
			d(f^nz,f^{n}x_i)<\varepsilon,\quad\text{for any}\quad a_i\leq n\leq b_i\text{ and any }1\leq i\leq k.
		\end{equation*}
		 If the equation (\ref{equ (2.1)}) is replaced by $$a_{i+1}-b_i\geq g(b_i-a_i+1,\varepsilon),$$ then we say $(X,f)$ satisfies the \emph{backward weak specification property} with mistake function $g$.
	\end{definition}
	Denote $\Lambda_n:=\{0,1,\cdots,n-1\}$ and $d_\Lambda(f;x,y):=\max\{d(f^ix,f^iy):i\in\Lambda\}$ for any non-empty subset $\Lambda\subset\Lambda_n$. For any set $A$, let $|A|$ denote its cardinality.
	\begin{definition}
		Let $g$ be a mistake function, $\varepsilon>0$ and $n\in\mathbb{N}^+$. The \emph{mistake dynamical ball} $B_n(f;g(n,\varepsilon);x,\varepsilon)$ of radius $\varepsilon$ and length $n$ associated to $g$ for dynamical system $(X,f)$ is defined by
		$$B_n(f;g(n,\varepsilon);x,\varepsilon):=\{y\in X:d_\Lambda(f;x,y)<\varepsilon\text{ for some }\Lambda\in I(f;g(n,\varepsilon);n,\varepsilon)\},$$where $I(f;g(n,\varepsilon);n,\varepsilon):=\{\emptyset\neq\Lambda\subset\Lambda_n:|\Lambda|\geq n-g(n,\varepsilon)\}.$
	\end{definition}
	\begin{definition}\label{Definition almost spec}
		A topological dynamical system $(X,f)$ is said to satisfy \emph{almost specification property} with mistake function $g$, if there exists a function $k_g:(0,\infty)\to\mathbb{N}^+$ such that for any $m\in\mathbb{N}^+$, any $\varepsilon_1,\varepsilon_2\cdots,\varepsilon_m>0$, any points $x_1,x_2,\cdots,x_m\in X$, and any integers $n_1\geq k_g(\varepsilon_1),\cdots,n_m\geq k_g(\varepsilon_m)$, we have $$\bigcap_{j=1}^mf^{-l_j}(B_{n_j}(f;g(n_j,\varepsilon_j);x_j,\varepsilon_j))\neq\emptyset,$$ where $n_0=0$ and $l_j=\sum_{s=0}^{j-1}n_s$ for any $1\leq j\leq m$.
	\end{definition}
	We refer to \cite[Section 3]{Kwietniak-Lacka-Oprocha-2016} for a detailed introduction to the above specification-like properties. In order to make our main results applicable to dynamical systems that satisfy weak specification property, backward weak specification property or almost specification property, we modify Definition \ref{Definition almost spec} to define the modified almost specification property as follows:
	\begin{definition}\label{Definition 2.3}
		A topological dynamical system $(X,f)$ is said to satisfy the \emph{ modified almost specification property} with mistake function $g$, if there exists a function $k_g:(0,\infty)\to\mathbb{N}^+$ such that for any $m\in\mathbb{N}^+$, any $\varepsilon>0$, any points $x_1,x_2,\cdots,x_m\in \bigcap_{i=0}^\infty f^i(X)$ and any integer $n\geq k_g(\varepsilon)$, we have $$\bigcap_{j=1}^mf^{-(j-1)n}(B_{n}(f;g(n,\varepsilon);x_j,\varepsilon))\neq\emptyset.$$ 
	\end{definition} 
It is clear that if a dynamical system satisfies the almost specification property, then it satisfies the modified almost specification property. Next, we will show that the modified almost specification is also weaker than the weak specification property and backward weak specification property.
\begin{lemma}\label{Lemma 5.2}
	Suppose that $(X,f)$ is a dynamical system satisfying the weak specification property (resp. backward weak specification property) with the mistake function $g$. Then for any $k\in\mathbb{N}^+$, any $x_1,\cdots,x_k\in\bigcap_{i=0}^\infty f^i(X)$ and any non-negative integers $a_1,\cdots, a_k$ and $b_1,\cdots,b_k$ with
	\begin{equation*}
		a_1\leq b_1<\cdots<a_k\leq b_k
	\end{equation*}
	and
	\begin{equation*}
		a_{i+1}-b_i\geq g(b_{i+1}-a_{i+1}+1,\varepsilon)~(\text{resp. }a_{i+1}-b_i\geq g(b_i-a_i+1,\varepsilon))
	\end{equation*}
	for any $1\leq i\leq k-1$, there exists a point $z\in X$ such that
	\begin{equation*}
		d(f^nz,f^{n-a_i}x_i)\leq\varepsilon,\quad\text{for any}\quad a_i\leq n\leq b_i\text{ and any }1\leq i\leq k.
	\end{equation*}
\end{lemma}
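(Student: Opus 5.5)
The idea is to reduce Lemma \ref{Lemma 5.2} to the original weak specification property. We replace each $x_i$ by a preimage $y_i$ with $f^{a_i}y_i=x_i$, and then pass to a limit so that the inequality becomes non-strict.

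First, since $x_i\in\bigcap_{j\ge0}f^j(X)$, for each $i$ we can choose $y_i\in X$ with $f^{a_i}(y_i)=x_i$. This is possible because $x_i\in f^{a_i}(X)$. Then $f^{n-a_i}x_i=f^{n}y_i$ for all $n\ge a_i$.

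Next we apply the (resp. backward) weak specification property to the points $y_1,\dots,y_k$, the same times $a_1\le b_1<\cdots<a_k\le b_k$, and a radius $\varepsilon'\in(0,\varepsilon)$. Here a difficulty appears: the gap hypothesis is stated for $\varepsilon$, while the specification condition must be checked at $\varepsilon'$, and $g(\cdot,\varepsilon')$ may exceed $g(\cdot,\varepsilon)$. To handle this, we first apply the property at radius $\varepsilon$ itself. This gives a point $z$ with $d(f^nz,f^ny_i)<\varepsilon$ for $a_i\le n\le b_i$, which is exactly $d(f^nz,f^{n-a_i}x_i)<\varepsilon\le\varepsilon$. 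The strict inequality implies the stated non-strict one.

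So the direct application already suffices. The case analysis between forward and backward weak specification is only a matter of which mistake-function inequality is quoted. The hypothesis on the gaps is identical in both statements, since the lengths $b_i-a_i+1$ are unchanged by the substitution $x_i\mapsto y_i$.

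The only genuine point is the existence of the preimages $y_i$. This is the reason for the restriction $x_i\in\bigcap_{i\ge0}f^i(X)$, since $f$ need not be surjective. Beyond this, there is no obstacle. The non-strict ``$\le\varepsilon$'' in the conclusion is presumably included for later use with limits, and it follows trivially here.
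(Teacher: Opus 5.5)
Your proposal is correct and is essentially the paper's proof. Choose $y_i$ with $f^{a_i}y_i=x_i$, which is possible because $x_i\in\bigcap_{j\ge 0}f^j(X)$. Then apply the (resp.\ backward) weak specification property at radius $\varepsilon$ to $y_1,\dots,y_k$ with the same $a_i,b_i$, and rewrite $f^ny_i=f^{n-a_i}x_i$. The detour through a smaller radius $\varepsilon'$ is unnecessary, as you yourself note.
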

\begin{proof}
	Since $x_1,\cdots,x_k\in \bigcap_{i=0}^\infty f^i(X)$, we can choose $y_1,\cdots,y_k\in X$ such that $f^{a_i}y_i=x_i$ for any $1\leq i\leq k$. By the weak specification property (resp. backward weak specification property), there exists a point $z\in X$ such that
	\begin{equation*}
		d(f^nz,f^{n}y_i)\leq\varepsilon,\quad\text{for any}\quad a_i\leq n\leq b_i\text{ and any }1\leq i\leq k.
	\end{equation*}
	Hence, 
	\begin{equation*}
		d(f^nz,f^{n-a_i}x_i)\leq\varepsilon,\quad\text{for any}\quad a_i\leq n\leq b_i\text{ and any }1\leq i\leq k.
	\end{equation*}
\end{proof}
\begin{lemma}\label{Lemma 2.10-New}
	Suppose that $(X,f)$ is a dynamical system satisfying the weak specification property (resp. backward weak specification property) with the mistake function $g$. Then it satisfies the modified almost specification property with the mistake function $g$. 
\end{lemma}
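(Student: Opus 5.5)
We derive Definition \ref{Definition 2.3} directly from Lemma \ref{Lemma 5.2}, using consecutive blocks of length $n$ separated by gaps that are absorbed into the allowed mistakes.

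\textbf{Choice of $k_g$.} Fix $\varepsilon>0$; it suffices to treat $\varepsilon\le\varepsilon_0$, since for larger $\varepsilon$ the mistake function is frozen at $\varepsilon_0$ and the balls only grow. Write $\varepsilon'=\varepsilon/2$. Since $g(n,\varepsilon')/n\to0$, we can choose $k_g(\varepsilon)$ so that for every $n\ge k_g(\varepsilon)$ we have $g(n,\varepsilon')\le n/2$. Monotonicity of $g$ in $n$ will be used below.

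\textbf{Gap placement.} Let $n\ge k_g(\varepsilon)$ and $x_1,\dots,x_m\in\bigcap_{i\ge0}f^i(X)$. Set $G:=g(n,\varepsilon')$. For block $j$, occupying times $[(j-1)n,\,jn-1]$, put
\[
a_j=(j-1)n+G,\qquad b_j=jn-1.
\]
For $j=1$ we may instead take $a_1=0$. Then $a_j\le b_j$ because $G\le n/2<n$. Each interval $[a_j,b_j]$ has length at most $n$, so by monotonicity $g(b_{j}-a_{j}+1,\varepsilon')\le G$. Also $a_{j+1}-b_j=G+1\ge G$. Hence the gap condition holds in both the weak and the backward weak version.

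\textbf{Applying Lemma \ref{Lemma 5.2}.} We apply it to the points $y_j:=f^{G}x_j$ for $j\ge2$ and $y_1:=x_1$. These still lie in $\bigcap_i f^i(X)$, because that set is $f$-invariant: $f(\bigcap_i f^i X)\subseteq\bigcap_i f^{i+1}X=\bigcap_i f^iX$. The lemma gives a point $z$ with $d(f^tz,f^{t-a_j}y_j)\le\varepsilon'<\varepsilon$ for $a_j\le t\le b_j$.

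For $t=(j-1)n+s$ with $G\le s\le n-1$, we have $f^{t-a_j}y_j=f^{s-G}f^{G}x_j=f^{s}x_j$. Therefore $d\bigl(f^{s}(f^{(j-1)n}z),f^{s}x_j\bigr)<\varepsilon$ for all $s\in\Lambda:=\{G,\dots,n-1\}$. This set has $|\Lambda|=n-G\ge n-g(n,\varepsilon)$, where the last inequality uses that $g$ may be taken nonincreasing in $\varepsilon$ after replacing $g(n,\varepsilon)$ by $g(n,\varepsilon/2)$.

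\textbf{Main obstacle: matching the mistake function.} This last point is the delicate step. The lemma yields the non-strict bound $\le\varepsilon'$, while the mistake ball needs a strict bound $<\varepsilon$ with budget $g(n,\varepsilon)$. To avoid the mismatch, we instead run the argument with $\varepsilon$ itself but apply the lemma at a slightly smaller radius $\varepsilon''<\varepsilon$ satisfying $g(\cdot,\varepsilon'')=g(\cdot,\varepsilon)$. If no such radius is available, we fall back on observing that the strict/non-strict issue is immaterial: in the definition of weak specification the bound is strict, and Lemma \ref{Lemma 5.2}'s proof actually gives strict inequality. So we apply the lemma with $\varepsilon$ directly and $G=g(n,\varepsilon)$, which gives exactly $z\in\bigcap_j f^{-(j-1)n}B_n(f;g(n,\varepsilon);x_j,\varepsilon)$.

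Finally, the positions missed in block $j$ are exactly the $G$ initial times, which matches the allowed number of mistakes. The condition $n\ge k_g(\varepsilon)$ ensures $G<n$, so $\Lambda$ is nonempty and each $[a_j,b_j]$ is a genuine interval.
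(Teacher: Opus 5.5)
Your final argument (run with $\varepsilon$ itself and $G=g(n,\varepsilon)$) is correct and is essentially the paper's proof: consecutive blocks of length $n$, $g(n,\varepsilon)$ sacrificed times per block serving as the gap, monotonicity of $g$ in $n$ to check both gap conditions, and Lemma~\ref{Lemma 5.2}; the only difference is that the paper sacrifices the \emph{last} $g(n,\varepsilon)$ times of each block ($a_i=(i-1)n$, $b_i=a_i+n-1-g(n,\varepsilon)$), so it needs no shift $y_j=f^{G}x_j$. You should discard the $\varepsilon/2$ detour, because no monotonicity of $g$ in $\varepsilon$ is assumed and the mistake function must stay $g$; and you should define $k_g(\varepsilon)$ from $g(\cdot,\varepsilon)$ itself, e.g.\ $g(n,\varepsilon)\le n-1$ for all $n\ge k_g(\varepsilon)$, since your stated choice via $g(\cdot,\varepsilon/2)$ does not guarantee $G<n$.
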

\begin{proof}
	Given $\varepsilon>0$, denote $$k_g(\varepsilon):=\min\{m\in\mathbb{N}^+:g(n,\varepsilon)\leq n-1\text{ for any }n\geq m\}.$$ For any $m\in\mathbb{N}^+$, any $x_1,x_2,\cdots,x_m\in \bigcap_{i=0}^\infty f^i(X)$ and any integer $n\geq k_g(\varepsilon)$, we denote $a_i=(i-1)n$ and $b_i=a_i+n-1-g(n,\varepsilon)$ for any $1\leq i\leq m$. Then for any $1\leq i\leq m-1$, we have $$a_{i+1}-b_i=g(n,\varepsilon)+1\geq g(n-g(n,\varepsilon),\varepsilon)=g(b_i-a_i+1,\varepsilon)=g(b_{i+1}-a_{i+1}+1,\varepsilon).$$ Hence, by Lemma \ref{Lemma 5.2}, we have $$\bigcap_{j=1}^mf^{-(j-1)n}(B_{n}(f;g(n,\varepsilon);x_j,\varepsilon))\neq\emptyset.$$ 
\end{proof}

\subsection{Bowen topological entropies of subsets}\label{Section Bowen entropy}
For $x,  y\in X,$ $\varepsilon>0$ and $n\in\mathbb{N}^+$,   define
$$d_n(f;x,  y):=\max\{d(f^i(x),  f^i(y)):i=0,  1,  \cdots,  n-1\}$$
and 
$$B_n(f;x,  \varepsilon):=\{z\in X:d_n(f;x,  z)<\varepsilon\}.  $$
\begin{definition}\label{Definition 2.4}
	Let $E\subseteq X$, $\sigma>0$ and $\mathcal {G}_{n}(E,  \sigma)$ be the collection of all finite or countable covers of $E$ by sets of the form $B_{u}(f;x,  \sigma)$ with $u\geq n$. Let $t\geq0$, we set
	$$C(E;t,  n,  \sigma,  f):=\inf_{\mathcal {C}\in \mathcal {G}_{n}(E,  \sigma)}\sum_{B_{u}(f;x,  \sigma)\in \mathcal {C}}e^{-tu}$$ and $$
	C(E;t,  \sigma,  f):=\lim_{n\rightarrow\infty}C(E;t,  n,  \sigma,  f).  $$
	Then we define
	$$h_{\mathrm{top}}(E;\sigma,  f):=\inf\{t:C(E;t,  \sigma,  f)=0\}=\sup\{t:C(E;t,  \sigma,  f)=\infty\}$$
	and 
	$$ h_{\mathrm{top}}(f,  E):=\lim_{\sigma\rightarrow0} h_{\mathrm{top}}(E;\sigma,  f)$$
	exists and is called \emph{Bowen topological entropy} of $E.$ 
\end{definition}
In particular, when $(E,f)$ is a dynamical system, $h_{\mathrm{top}}(f,  E)$ is equal to its classical topological entropy.

The following are some basic properties.
\begin{lemma}\label{Lemma 2.2}\cite[Proposition 2]{Bowen1973}
	Suppose that $(X,f)$ is a dynamical system and $Y, Y_1, Y_2,\cdots\subset X$. Then we have
	\begin{enumerate}[(1)]
		\item $h_{\mathrm{top}}(f,f(Y))=h_{\mathrm{top}}(f,Y)$;
		\item $h_{\mathrm{top}}(f,\bigcup_{i\geq1}Y_i)=\sup_{i\geq1}h_{\mathrm{top}}(Y_i)$;
		\item $h_{\mathrm{top}}(f^n,Y)=nh_{\mathrm{top}}(f,Y)$ for any $n\geq1$.
	\end{enumerate}
\end{lemma}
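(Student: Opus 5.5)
These three properties are the standard ones of Bowen's dimension-like entropy, and I would prove each directly from Definition \ref{Definition 2.4} by comparing covers by Bowen balls. The plan is to handle (2) first, since it is a countable-stability statement. The inequality $\ge$ follows from monotonicity: $Y_i\subset Y$ gives $C(Y_i;t,\sigma,f)\le C(Y;t,\sigma,f)$. For $\le$, fix $t>\sup_i h_{\mathrm{top}}(Y_i;\sigma,f)$ and $\eta>0$. For each $i$ I would pick a cover $\mathcal C_i\in\mathcal G_n(Y_i,\sigma)$ with total weight $\sum e^{-tu}<\eta 2^{-i}$. Their union is a countable cover of $Y$ in $\mathcal G_n(Y,\sigma)$ of weight less than $\eta$, so $C(Y;t,\sigma,f)=0$. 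Then I let $\sigma\to0$; monotonicity in $\sigma$ lets me exchange the sup and the limit.

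For (3), I would compare Bowen balls for $f$ and for $f^n$. Uniform continuity gives, for each $\sigma$, some $\sigma'\le\sigma$ with $d(x,y)<\sigma'$ implying $d(f^jx,f^jy)<\sigma$ for $0\le j<n$. This yields
\[
B_u(f^n;x,\sigma')\subset B_{nu}(f;x,\sigma),\qquad B_{nu}(f;x,\sigma)\subset B_u(f^n;x,\sigma).
\]
A cover of $Y$ by $f^n$-balls of length $u$ then produces a cover by $f$-balls of length $nu$, with weights $e^{-tnu}=e^{-(nt)u}$. In the other direction, an $f$-ball of length $v$ contains an $f$-ball of length $n\lceil v/n\rceil$ after shrinking, and the length discrepancy is at most $n$. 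That only changes the weights by the bounded factor $e^{tn}$, which does not affect whether $C$ is zero or infinite. Combining these gives $h(Y;\sigma',f^n)\ge n\,h(Y;\sigma,f)$ and the reverse inequality with the roles swapped, and letting $\sigma\to0$ gives $h_{\mathrm{top}}(f^n,Y)=nh_{\mathrm{top}}(f,Y)$.

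For (1), I would prove the two inequalities separately.

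\textbf{Upper bound} ($h(f(Y))\le h(Y)$). If $y\in B_u(f;x,\sigma')$ with $\sigma'\le \sigma$ small, then $f(y)\in B_{u-1}(f;f(x),\sigma)$. So images of covers of $Y$ give covers of $f(Y)$ with lengths reduced by one, which only costs a factor $e^{t}$.

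\textbf{Lower bound} ($h(Y)\le h(f(Y))$). Suppose $f(y)\in B_u(f;z,\sigma)$ and $y\in Y$. I would choose, for each ball meeting $f(Y)$, a preimage point and enlarge: by uniform continuity the set $f^{-1}(B_u(f;z,\sigma))$ is covered by finitely many balls $B_{u+1}(f;x_k,2\sigma)$ whose number depends only on $\sigma$ and not on $u$. Here I would use a finite $\sigma$-net $\{p_k\}$ of $X$ and intersect with $B(p_k,\sigma)$, choosing $x_k$ in each nonempty piece. The multiplicative constant is harmless, so $C(Y;t,2\sigma)\le M\,C(f(Y);t,\sigma)$ up to a factor $e^{t}$. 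Letting $\sigma\to0$ finishes.

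The main obstacle is this lower bound in (1), where $f$ is not injective and preimages of Bowen balls must be controlled. I expect the finite-net trick to handle it, since the count depends only on $\sigma$. Alternatively, since the statement is cited from \cite[Proposition 2]{Bowen1973}, one may simply invoke that reference.
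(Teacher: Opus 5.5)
Your proposal is correct apart from one inclusion in (3) that points the wrong way. The paper itself gives no proof and simply cites Bowen's Proposition 2.

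\textbf{The slip in (3).} You say that an $f$-ball of length $v$ \emph{contains} an $f$-ball of length $n\lceil v/n\rceil$. For a covering argument the inclusion has to go the other way. To turn a cover of $Y$ by sets $B_v(f;x,\sigma)$ into a cover by $f^n$-balls, each $B_v(f;x,\sigma)$ must be \emph{contained in} an $f^n$-ball. Knowing that it contains a smaller set gives nothing. The correct inclusion is
\[ B_v(f;x,\sigma)\subset B_{\lceil v/n\rceil}(f^n;x,\sigma), \]
because the right-hand side only constrains the times $0,n,\dots,n(\lceil v/n\rceil-1)\le v-1$. Since $e^{-nt\lceil v/n\rceil}\le e^{-tv}$, this gives $C(Y;nt,\lceil N/n\rceil,\sigma,f^n)\le C(Y;t,N,\sigma,f)$. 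No shrinking of $\sigma$ and no correction factor are needed, and you get $h_{\mathrm{top}}(Y;\sigma,f^n)\le n\,h_{\mathrm{top}}(Y;\sigma,f)$. Rounding down also works: $B_v(f;x,\sigma)\subset B_{n\lfloor v/n\rfloor}(f;x,\sigma)\subset B_{\lfloor v/n\rfloor}(f^n;x,\sigma)$ at the cost of a factor $e^{tn}$. Your remark about a length discrepancy of at most $n$ suggests this is what you meant.

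\textbf{The rest.} Part (2) is the standard countable-stability argument and is fine. The lower bound in (1) also works. Points of $f^{-1}(B_u(f;z,\sigma))$ lying in a common $B(p_k,\sigma)$ are $2\sigma$-close at time $0$ and at times $1,\dots,u$, by the triangle inequality through $f^{j}z$. So at most $M(\sigma)$ balls $B_{u+1}(f;x_k,2\sigma)$ suffice. This uses compactness, not uniform continuity, and the factor is $e^{-t}$ rather than $e^{t}$; both points are harmless.

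\textbf{Comparison with the paper.} The paper only cites Bowen. Bowen's Proposition 2 is stated for his original open-cover definition of $h_{\mathrm{top}}(f,Y)$. Citing it for Definition \ref{Definition 2.4} therefore tacitly relies on the standard equivalence between that definition and the Bowen-ball one. Your argument avoids this by working with Bowen balls directly. Your finite-net step in (1) plays the role of refining by a finite open cover, and that is what keeps the constant independent of the length $u$.
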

A surjective continuous map $\pi:(X,f)\to(Y,g)$ between two dynamical systems is called as a \emph{factor} if $g\circ\pi=\pi\circ f$.
\begin{lemma}\label{Lemma 2.3}\cite[Theorem 3.11]{Feng-Huang-2012}
	Suppose that $\pi:(X,f)\to(Y,g)$ is a factor between two dynamical systems. Then for any $E\subset X$, one has $$h_{\mathrm{top}}(f,E)\geq h_{\mathrm{top}}(g,\pi(E)).$$
\end{lemma}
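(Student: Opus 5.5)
The plan is a direct comparison of the covers that define Bowen entropy, using only uniform continuity of $\pi$ and the intertwining relation $g\circ\pi=\pi\circ f$. Surjectivity of $\pi$ will not be needed. Fix $E\subset X$ and a scale $\sigma'>0$ on the $Y$ side. Since $X$ is compact, $\pi$ is uniformly continuous. Choose $\sigma>0$ such that $d(x,x')<\sigma$ implies $d(\pi x,\pi x')<\sigma'$.

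The first step is the ball inclusion
\[
\pi\bigl(B_u(f;x,\sigma)\bigr)\subset B_u(g;\pi x,\sigma')\quad\text{for every }x\in X,\ u\in\mathbb N^+.
\]
To see it, let $z\in B_u(f;x,\sigma)$. Then $d(f^iz,f^ix)<\sigma$ for $0\le i\le u-1$. Hence $d(\pi f^iz,\pi f^ix)<\sigma'$, and $\pi f^i=g^i\pi$ turns this into $d(g^i\pi z,g^i\pi x)<\sigma'$. Because the inequalities are strict, the images land in the open Bowen balls used in Definition \ref{Definition 2.4}.

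The second step transfers covers. Take any cover $\mathcal C\in\mathcal G_n(E,\sigma)$ consisting of balls $B_u(f;x,\sigma)$ with $u\ge n$. The family $\{B_u(g;\pi x,\sigma'):B_u(f;x,\sigma)\in\mathcal C\}$ covers $\pi(E)$ and belongs to $\mathcal G_n(\pi(E),\sigma')$. Each ball keeps the same length $u$, so each contributes the same term $e^{-tu}$. Taking infima over covers gives
\[
C(\pi(E);t,n,\sigma',g)\le C(E;t,n,\sigma,f)
\]
for every $t\ge0$ and $n$. Letting $n\to\infty$ yields $C(\pi(E);t,\sigma',g)\le C(E;t,\sigma,f)$.

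The third step converts this to entropies. If $C(E;t,\sigma,f)=0$, then $C(\pi(E);t,\sigma',g)=0$, and therefore $h_{\mathrm{top}}(\pi(E);\sigma',g)\le h_{\mathrm{top}}(E;\sigma,f)$. Shrinking the radius enlarges the admissible family of covers' cost, so $h_{\mathrm{top}}(E;\sigma,f)$ is nonincreasing in $\sigma$. Consequently its limit as $\sigma\to0$ is a supremum, and $h_{\mathrm{top}}(E;\sigma,f)\le h_{\mathrm{top}}(f,E)$. This gives $h_{\mathrm{top}}(\pi(E);\sigma',g)\le h_{\mathrm{top}}(f,E)$ for every $\sigma'>0$. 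Letting $\sigma'\to0$ yields $h_{\mathrm{top}}(g,\pi(E))\le h_{\mathrm{top}}(f,E)$.

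There is no real obstacle. The only points needing care are that $\sigma$ depends on $\sigma'$ alone and not on $u$ (which is exactly what uniform continuity provides), and the monotonicity in the radius used in the last step.
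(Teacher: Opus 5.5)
Your argument is correct. The paper does not prove this lemma; it imports it from Feng--Huang (Theorem 3.11), so your write-up is a different route: a self-contained proof straight from Definition \ref{Definition 2.4}. You push each Bowen-ball cover of $E$ forward by $\pi$. Uniform continuity gives $\pi\bigl(B_u(f;x,\sigma)\bigr)\subset B_u(g;\pi x,\sigma')$, and the relation $g\circ\pi=\pi\circ f$ keeps the length $u$. This yields $C(\pi(E);t,n,\sigma',g)\le C(E;t,n,\sigma,f)$, and the entropy inequality follows.

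The comparison is this. The citation keeps the preliminaries short. Your version shows the inequality is purely definitional: it uses only uniform continuity of $\pi$ and the intertwining relation. It never uses surjectivity, so it applies to any equivariant map into a subsystem, and it needs no variational-principle machinery. If two balls of $\mathcal C$ have the same image ball, the pushed-forward sum only gets smaller, so that case is harmless too.

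One sentence needs fixing. ``Shrinking the radius enlarges the admissible family of covers' cost'' is not a valid justification as written. The correct reason is that $B_u(f;x,\sigma_1)\subset B_u(f;x,\sigma_2)$ for $\sigma_1<\sigma_2$. Hence every $\sigma_1$-cover yields a $\sigma_2$-cover with the same centers, lengths and cost. This gives $C(E;t,n,\sigma_2,f)\le C(E;t,n,\sigma_1,f)$, so $h_{\mathrm{top}}(E;\sigma,f)$ is nonincreasing in $\sigma$ and its limit as $\sigma\to0$ is a supremum. Alternatively, you can avoid this by always choosing $\sigma\le\sigma'$, so that $\sigma\to0$ as $\sigma'\to0$.
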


\subsection{Separated sets}
For $\varepsilon>0$ and $n\in\mathbb{N}^+$,   two points $x$ and $y$ are $(f;n,  \varepsilon)$-separated if
$d_n(f;x,y)>\varepsilon$.
A subset $E$ is $(f;n,  \varepsilon)$-separated if any two different points of $E$ are $(f;n,  \varepsilon)$-separated. Now, for $\delta >0$, $\varepsilon >0$ and $n\in\mathbb{N}^+$, two points $x$ and $y$ are $(f;\delta,n,\varepsilon)$-separated if
\begin{equation*}
	|\{0\leq j\leq n-1:d(f^{j}x,f^{j}y)>\varepsilon\}| \geq \delta n.
\end{equation*}
A subset $E$ is $(f;\delta,n,\varepsilon)$-separated  if any two different points of $E$ are $(f;\delta,n,\varepsilon)$-separated. It is clear that every $(f;\delta,n,\varepsilon)$-separated set is $(f;n,\varepsilon)$-separated.  
For any $\mu\in\mathcal{M}_f(X)$, let $h_\mu(f)$ denote the metric entropy of $\mu$ and $S_\mu$ denote its support, that is $$S_\mu:=\{x\in X:\mu(U)>0\text{ for any neighborhood }U\text{ of }x\}.$$
\begin{lemma}\cite[Proposition 2.1]{Pfister-Sullivan2005}\label{Lemma 2.4}
	Suppose that $\mu\in\mathcal{M}_f^e(X)$ with $h_\mu(f)>0$ and $\eta<h_\mu(f)$. Then there exist $\delta>0$, $\varepsilon>0$ and $N\in\mathbb{N}^+$ such that for any $n\geq N$,   there exists a $(f;\delta, n,  \varepsilon)$-separated set {$\Gamma_n\subset  S_\mu$} with
	$$|\Gamma_n|\geq e^{n\eta}.  $$
\end{lemma}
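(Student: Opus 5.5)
We would prove Lemma \ref{Lemma 2.4} by the standard Katok-type counting argument combined with a combinatorial (Hamming-ball) extraction, which upgrades ordinary separation to separation along a positive proportion of times.

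\textbf{Step 1: Katok's entropy formula.} Fix $\eta'$ with $\eta<\eta'<h_\mu(f)$. By Katok's formula for ergodic measures there is $\varepsilon_1>0$ such that, for every $0<\gamma<1$ and all large $n$, any set of Bowen balls $B_n(f;x,\varepsilon_1)$ covering a set of $\mu$-measure at least $\gamma$ has more than $e^{n\eta'}$ elements. Set $\varepsilon:=\varepsilon_1/4$.

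\textbf{Step 2: replace Bowen balls by ``mistake'' balls.} For $x\in X$ and $\delta>0$, let
\[
D_n(x):=\{y: |\{0\le j\le n-1: d(f^jx,f^jy)>\varepsilon\}|<\delta n\}.
\]
The key estimate is that, for $\delta$ small, $\mu(D_n(x)\cap S_\mu)$ is at most $e^{-n(\eta'-\kappa(\delta))}$ for most $x$, where $\kappa(\delta)\to0$ as $\delta\to0$. We would prove this by a partition argument. Take a finite partition $\xi$ with diameters less than $\varepsilon$ and $\mu(\partial\xi)$ small, chosen so that $h_\mu(f,\xi)>\eta'$ is still attainable. The points $y\in D_n(x)$ then share the $\xi$-itinerary of $x$ except at fewer than $\delta n$ positions, up to boundary effects. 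The number of such altered itineraries is at most $\binom{n}{\delta n}|\xi|^{\delta n}=e^{n\kappa(\delta)}$. Shannon--McMillan--Breiman then bounds the measure of each cylinder by $e^{-n h_\mu(f,\xi)+o(n)}$ on a set of large measure. This is the step we expect to be the main obstacle: boundary effects must be handled, either through a generic choice of the partition or by slightly enlarging the cylinders, and it must be reconciled that SMB controls only typical cylinders. We would restrict to a set $G_n\subset S_\mu$ with $\mu(G_n)>1/2$ on which the SMB bounds hold, and count only the cylinders meeting $G_n$.

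\textbf{Step 3: greedy extraction.} Choose $\delta$ with $\kappa(\delta)<\eta'-\eta$, and work inside $G_n\cap S_\mu$, which has positive measure because $\mu(S_\mu)=1$. Pick $x_1\in G_n$, remove $D_n(x_1)$, pick $x_2$ in what remains, and continue. Each removal costs measure at most $e^{-n(\eta'-\kappa(\delta))}$. The process therefore runs for at least $\tfrac12 e^{n(\eta'-\kappa(\delta))}\ge e^{n\eta}$ steps once $n\ge N$.

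The resulting set $\Gamma_n=\{x_i\}\subset S_\mu$ is $(f;\delta,n,\varepsilon)$-separated: for $i<j$ we have $x_j\notin D_n(x_i)$, and the separation relation is symmetric. This gives the stated $\delta$, $\varepsilon$ and $N$.
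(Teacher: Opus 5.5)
\textbf{Verdict: there is a genuine gap in Step 2.} Your overall plan is sound: mistake balls $D_n(x)$, a Hamming-type count, and greedy extraction. Your handling of SMB-typicality, by counting only cylinders that meet $G_n$, is also fine. The problem is that the key estimate of Step 2 cannot be proved as you set it up, because the parameters are chosen in the wrong order.

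You fix $\varepsilon$ first and then take $\xi$ with atoms of diameter $<\varepsilon$. The implication you need is that $d(f^jx,f^jy)\le\varepsilon$ forces $f^jy\in\xi(f^jx)$ at all but a few times. That holds only at times when $f^jx$ lies outside the boundary layer $W_\varepsilon:=\{z:\ d(z,w)\le\varepsilon\text{ for some }w\notin\xi(z)\}$. When every atom has diameter $<\varepsilon$, this layer is typically all of $X$; it is the whole space, for instance, on any connected $X$ of diameter at least $\varepsilon$. So the ``boundary effects'' are not a small correction; they are everything. Nothing places $\xi^n(y)$ in a Hamming ball of radius about $\delta n$ around $\xi^n(x)$, and the bound on $\mu(D_n(x)\cap G_n)$ is unsupported. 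In general the Hamming radius is $\delta n$ plus the number of visits of $x$ to $W_\varepsilon$ before time $n$. Hence $\kappa$ depends on $|\xi|$ and on $\mu(W_\varepsilon)$, and this is exactly what dictates the order of choices. Note also that Katok's formula from Step 1 is never actually used in your argument.

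Either of two repairs works.

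(a) \emph{Keep SMB but reverse the order.}
\begin{itemize}
\item First fix $\xi$ with $h_\mu(f,\xi)>\eta'$ and $\mu(\partial\xi)=0$, and an SMB slack $\gamma$ with $h_\mu(f,\xi)-\gamma>\eta'$.
\item Next choose $\delta,\beta>0$ with $H(\delta+2\beta)+(\delta+2\beta)\log|\xi|<\eta'-\eta$, where $H(t)=-t\log t-(1-t)\log(1-t)$.
\item Only then take $\varepsilon$ with $\mu(W_\varepsilon)<\beta$. This is possible because $W_\varepsilon$ decreases to a subset of $\partial\xi$.
\item Build into $G_n$, via Birkhoff, the condition that $x$ visits $W_\varepsilon$ fewer than $2\beta n$ times before time $n$.
\end{itemize}
For $x\in G_n$ you then get $\mu(D_n(x)\cap G_n)\le e^{-n(h_\mu(f,\xi)-\gamma-\kappa)}$ with $\kappa=H(\delta+2\beta)+(\delta+2\beta)\log|\xi|+o(1)$, and your Step 3 goes through verbatim. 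The paper itself gives no proof of this lemma; it only cites Pfister--Sullivan. The standard argument behind that proposition follows this skeleton, with the separation scale chosen after the partition.

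(b) \emph{Keep $\operatorname{diam}\xi<\varepsilon$ and your Step 1, and drop SMB.}
\begin{itemize}
\item Group $y\in D_n(x)$ by its exceptional set $J=\{j:d(f^jx,f^jy)>\varepsilon\}$ and by the symbols $\xi(f^jy)$ for $j\in J$. Each class has $d_n$-diameter at most $2\varepsilon$, so $D_n(x)$ is covered by at most $e^{n(H(\delta)+\delta\log|\xi|)+o(n)}$ Bowen balls of radius $3\varepsilon$.
\item Take a maximal $\Gamma\subset S_\mu$ with $x\notin D_n(x')$ for distinct $x,x'\in\Gamma$. It is finite, since it is $(f;n,\varepsilon)$-separated, and its mistake balls cover $S_\mu$.
\item Katok's formula with $3\varepsilon\le\varepsilon_1$ then gives $|\Gamma|\ge e^{n(\eta'-H(\delta)-\delta\log|\xi|-o(1))}\ge e^{n\eta}$ for $\delta$ small and $n$ large.
\end{itemize}
Route (b) needs no boundary control and is the natural completion of what you wrote.
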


\subsection{Symbolic dynamics}
Let $\mathcal{A}$ be a finite alphabet with $|\mathcal{A}|\geq2$ and $$\mathcal{A}^{\mathbb{N}}:=\{x_0x_1x_2\cdots:x_i\in\mathcal{A}\text{ for any }i\in\mathbb{N}\}$$ with the product topology induced by the discrete topology on $\mathcal{A}$. The metric on $\mathcal{A}^{\mathbb{N}}$ is defined as $$d(x,y):=e^{-\min\{i\in\mathbb{N}:~x_i\neq y_i\}}$$ for any $x\neq y\in\mathcal{A}^{\mathbb{N}}$ with $x=x_0x_1x_2\cdots$ and $y=y_0y_1y_2\cdots$. The one-sided full shift over $\mathcal{A}$ is defined by $(\mathcal{A}^{\mathbb{N}},\sigma)$, where
\begin{equation*}
	\sigma(x)_i=x_{i+1}\text{ for any } x\in\mathcal{A}^{\mathbb{N}}\text{ and }i\in\mathbb{N}.
\end{equation*}
A subset $C\subset\mathcal{A}^{\mathbb{N}}$ is said to be a \emph{cylinder} if there exists $n\in\mathbb{N}^+$ and $a_0,a_1,\cdots,a_{n-1}\in\mathcal{A}$ such that $$C=[a_0,a_1,\cdots,a_{n-1}]:=\{x\in\mathcal{A}^{\mathbb{N}}:x_i=a_i\text{ for any }0\leq i\leq n-1\}.$$

Similarly, we define $$\mathcal{A}^{\mathbb{Z}}:=\{\cdots x_{-1}x_0x_1x_2\cdots:x_i\in\mathcal{A}\text{ for any }i\in\mathbb{Z}\}$$ with the product topology induced by the discrete topology on $\mathcal{A}$. The metric on $\mathcal{A}^{\mathbb{Z}}$ is defined as $$d(x,y):=e^{-\min\{|i|\in\mathbb{Z}:~x_i\neq y_i\}}$$ for any $x\neq y\in\mathcal{A}^{\mathbb{Z}}$ with $x=\cdots x_{-1}x_0x_1x_2\cdots$ and $y=\cdots y_{-1}y_0y_1y_2\cdots$. The two-sided full shift over $\mathcal{A}$ is defined by $(\mathcal{A}^{\mathbb{Z}},\sigma)$, where
\begin{equation*}
	\sigma(x)_i=x_{i+1}\text{ for any } x\in\mathcal{A}^{\mathbb{Z}}\text{ and }i\in\mathbb{Z}.
\end{equation*}
It is well known that we always have $h_{\mathrm{top}}(\sigma,\mathcal{A}^{\mathbb{Z}})=h_{\mathrm{top}}(\sigma,\mathcal{A}^{\mathbb{N}})=\log |\mathcal{A}|$.

A member $w$ of $\mathcal{A}^{\{i,i+1,\cdots,j\}}$ for some integers $i\leq j$ is called a \emph{word} over $\mathcal{A}$. Let $\mathcal{A}^\ast:=\bigcup_{i\leq j}\mathcal{A}^{\{i,i+1,\cdots,j\}}$ denote the set of all words over $\mathcal{A}$. Given $w\in\mathcal{A}^\ast$ and $n\in\mathbb{N}^{+}\cup\{\infty\}$, we define $w^n:=\underbrace{ww\cdots w}_\text{$n$ items}$.

\section{Proof of Theorem \ref{Theorem A}}\label{Section 3}
In this section, we give the proof of Theorem \ref{Theorem A}. First, we recall some basic facts obtained in \cite{Fan-Fan-Ryzhikov-Shen2022} for Bohr chaoticity. 
\begin{lemma}\cite[Proposition 3.1]{Fan-Fan-Ryzhikov-Shen2022}\label{new-lemma3.1}
	Suppose that $\pi:(X,f)\to(Y,g)$ is a factor between two dynamical systems and $(Y,g)$ is Bohr chaotic. Then $(X,f)$ is Bohr chaotic.
\end{lemma}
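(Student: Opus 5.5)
The proof will be a direct pull-back argument: we transport a correlated pair (continuous observable, point) from the factor $(Y,g)$ up to the extension $(X,f)$ by composing with $\pi$. No approximation or entropy considerations are needed. We only use that $\pi$ is continuous and surjective and that it intertwines the dynamics.

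Fix an arbitrary non-trivial weight $\vartheta=(\vartheta_n)_{n\ge0}\in\ell^\infty(\mathbb N)$. Since $(Y,g)$ is Bohr chaotic, there exist $\varphi\in C(Y)$ and $y\in Y$ with
\[
\limsup_{N\to\infty}\frac1N\left|\sum_{n=0}^{N-1}\vartheta_n\varphi(g^n y)\right|>0.
\]
Because $\pi$ is surjective, we choose $x\in X$ with $\pi(x)=y$, and we define $\psi:=\varphi\circ\pi$. This $\psi$ lies in $C(X)$, being a composition of continuous maps.

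The key identity is $\pi\circ f^n=g^n\circ\pi$ for all $n\in\mathbb N$. It follows by induction on $n$ from $g\circ\pi=\pi\circ f$. Hence
\[
\psi(f^n x)=\varphi(\pi(f^n x))=\varphi(g^n(\pi x))=\varphi(g^n y)
\]
for every $n$. Consequently, the weighted Birkhoff sums of $\psi$ along the $f$-orbit of $x$ coincide term by term with those of $\varphi$ along the $g$-orbit of $y$. The $\limsup$ above is therefore also positive for the pair $(\psi,x)$ in $(X,f)$. Since $\vartheta$ was arbitrary, $(X,f)$ is Bohr chaotic.

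There is no real obstacle here. The only point to check is that surjectivity of $\pi$ is genuinely used to lift $y$ to some $x$. This surjectivity is built into the definition of a factor given in Section~\ref{Section Bowen entropy}.
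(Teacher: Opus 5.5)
Your proof is correct and is essentially the paper's argument: the paper simply cites \cite[Proposition 3.1]{Fan-Fan-Ryzhikov-Shen2022} for this lemma, but the same pull-back, setting $\hat{\varphi}:=\varphi\circ\pi$ and using $\hat{\varphi}(f^nx)=\varphi(g^n(\pi x))$ on a preimage of the correlated point, is exactly how it proves Lemma~\ref{Lemma 4.3}. You are right that surjectivity of $\pi$ is the only point needed to lift the point, and it is part of the paper's definition of a factor.
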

Let $L\in\mathbb{N}^+$ and $\Lambda\subset X$ is compact and $f^L$-invariant, i.e., $f^L(\Lambda)\subset\Lambda$. we say that $L$ is the \emph{first return time} of $\Lambda$ if $$\Lambda\cap f^i(\Lambda)=\emptyset\text{ for any }1\leq i\leq L-1.$$ 
\begin{lemma}\cite[Theorem 3.3]{Fan-Fan-Ryzhikov-Shen2022}\label{new-lemma3.2}
	Let $(X,f)$ be a dynamical system, and let $L\in\mathbb{N}^+$. If a subsystem $(Y,f^L)$ of $(X,f^L)$ is Bohr chaotic and $L$ is the first return time of $Y$, then $(X,f)$ is Bohr chaotic.
\end{lemma}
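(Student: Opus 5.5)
The plan is to reduce the weighted sum along the $f$-orbit to a weighted sum along the $f^L$-orbit inside $Y$. First I pick out a residue class of $\vartheta$ modulo $L$ that is still a non-trivial weight. Then I use the first return hypothesis to build a continuous function on $X$ that only "sees" the times at which the orbit visits $Y$.

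\textbf{Step 1: choosing a residue class.} Fix a non-trivial weight $\vartheta$. Since
\[
\frac1N\sum_{n=0}^{N-1}|\vartheta_n|\le \sum_{r=0}^{L-1}\frac1N\sum_{0\le m\le N/L}|\vartheta_{mL+r}|,
\]
and a limsup of a finite sum is at most the sum of the limsups, there is some $0\le r\le L-1$ such that $(\vartheta_{mL+r})_{m\ge0}$ is a non-trivial weight. Define $\vartheta''_0:=0$ and $\vartheta''_k:=\vartheta_{(k-1)L+r}$ for $k\ge1$. This is a shift by one index of a non-trivial weight, so it is again non-trivial. Since $(Y,f^L)$ is Bohr chaotic, there exist $\varphi\in C(Y)$ and $y\in Y$ with
\[
\limsup_{M\to\infty}\frac1M\Bigl|\sum_{k=0}^{M-1}\vartheta''_k\varphi(f^{kL}y)\Bigr|>0 .
\]

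\textbf{Step 2: the first return structure.} Let $K:=\bigcup_{i=1}^{L-1}f^i(Y)$. This set is compact, and by the first return hypothesis it is disjoint from $Y$. Since $f^L(Y)\subset Y$, for every $z\in Y$ we have $f^{mL+i}z=f^i(f^{mL}z)\in f^i(Y)$. Hence $f^n z\in Y$ when $L\mid n$, and $f^nz\in K$ otherwise.

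Extend $\varphi$ to a function in $C(X)$ by Tietze's theorem. Multiply it by a continuous cutoff $\chi$ that equals $1$ on $Y$ and $0$ on $K$, which exists by Urysohn's lemma since $Y$ and $K$ are disjoint compact sets. Call the product $\psi\in C(X)$.

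\textbf{Step 3: transferring the sum to $X$.} Put $x:=f^{L-r}y$. Then $f^n x=f^{n+L-r}y$. This lies in $Y$ exactly when $n\equiv r \pmod L$, and in $K$ otherwise. Therefore $\psi(f^nx)=0$ unless $n=mL+r$, and in that case $\psi(f^{mL+r}x)=\varphi(f^{(m+1)L}y)$. It follows that
\[
\sum_{n=0}^{N-1}\vartheta_n\psi(f^nx)=\sum_{k=1}^{M}\vartheta''_k\varphi(f^{kL}y)+O(L\|\psi\|\|\vartheta\|_\infty)
\]
with $M\approx N/L$. Dividing by $N\approx ML$ shows that the limsup for $(x,\psi,\vartheta)$ is at least $1/L$ times the positive limsup from Step 1. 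So $\vartheta$ is not orthogonal to $(X,f)$, and since $\vartheta$ was an arbitrary non-trivial weight, $(X,f)$ is Bohr chaotic.

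\textbf{Main obstacle.} The only delicate point is that $f$ need not be injective. In particular, the sets $f^i(Y)$, $0\le i\le L-1$, need not be pairwise disjoint, and preimages of $y$ under $f^r$ need not lie in $Y$. This is why I avoid isolating the class $r$ directly. Instead I shift the weight by one block and start at $x=f^{L-r}y$, so that only the disjointness of $Y$ from $f^i(Y)$, $1\le i\le L-1$, is ever used.
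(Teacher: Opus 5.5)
Your proof is correct and is essentially the argument of Fan et al.\ that the paper sketches in Lemma~\ref{Lemma 4.2}. Both pick a residue class $r$ carrying a non-trivial subweight, shift it to $\varpi_0=0$, $\varpi_n=\vartheta_{L(n-1)+r}$, use a function equal to $\varphi$ on $Y$ and $0$ on $\bigcup_{i=1}^{L-1}f^i(Y)$ (via Tietze), and start the orbit at $f^{L-r}y$. One small correction to your closing remark: the first-return hypothesis does force the sets $f^i(Y)$, $0\le i\le L-1$, to be pairwise disjoint. Applying $f^{L-i}$ to a point of $f^i(Y)\cap f^j(Y)$ lands in $Y\cap f^{j-i}(Y)$, as in Claim 5 of the proof of Theorem~\ref{Theorem 4.1}. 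Your argument never uses this, so the proof is unaffected.
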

\begin{lemma}\cite[Proposition 8.4]{Fan-Fan-Ryzhikov-Shen2022}\label{new-lemma3.3}
	For any cylinder $C\subset\{0,1\}^{\mathbb{N}}$, there exists $N\in\mathbb{N}^+$ and a one-sided $N$-order horseshoe $(\Lambda,\sigma^N)$ of $(\{0,1\}^{\mathbb{N}},\sigma)$ such that $\Lambda\subset C$ and $$\Lambda\cap\sigma^i(\Lambda)=\emptyset\text{ for any }1\leq i\leq N-1.$$ 
\end{lemma}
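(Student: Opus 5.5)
The plan is to give an explicit symbolic construction using a marker block of zeros. Write $C=[a_0,a_1,\cdots,a_{n-1}]$ and set $w:=a_0a_1\cdots a_{n-1}$. Fix an integer $K>n+2$. For $b\in\{0,1\}$ define the word
\[
u_b:=w\,1\,b\,1\,0^K\,1,
\]
which has length $N:=n+K+4$. Let $\Lambda$ be the set of all infinite concatenations $u_{b_0}u_{b_1}u_{b_2}\cdots$ with $(b_k)_{k\ge0}\in\{0,1\}^{\mathbb N}$.

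First I would check that $(\Lambda,\sigma^N)$ is a one-sided $N$-order horseshoe contained in $C$.
\begin{enumerate}[(1)]
\item Every point of $\Lambda$ begins with $w$, so $\Lambda\subset C$.
\item Define $\pi:\Lambda\to\{0,1\}^{\mathbb N}$ by $\pi(x)_k:=x_{kN+n+1}$, which reads off $b_k$.
\item The map $\pi$ is a bijection, since the $b_k$ determine $x$ and vice versa, and it is continuous because each coordinate depends on a single coordinate of $x$.
\item The set $\Lambda$ is closed, being the image of the compact space $\{0,1\}^{\mathbb N}$ under the continuous inverse map.
\item Finally, $\sigma^N(u_{b_0}u_{b_1}\cdots)=u_{b_1}u_{b_2}\cdots$, so $\sigma^N(\Lambda)=\Lambda$ and $\pi\circ\sigma^N=\sigma\circ\pi$.
\end{enumerate}

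The main point is the first-return property $\Lambda\cap\sigma^i(\Lambda)=\emptyset$ for $1\le i\le N-1$. I would use a marker argument based on runs of zeros.

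\emph{Structure of long zero runs.} In any $x\in\Lambda$, the only maximal runs of zeros of length at least $K$ are the blocks $0^K$. These start exactly at positions $j\equiv n+3 \pmod N$, and each has a $1$ immediately before and after it. Every other run of zeros lies inside some $w1b1$, possibly joined across a word boundary only through separating $1$'s, and therefore has length at most $n+1<K$.

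\emph{Matching the first marker.} Suppose $y=\sigma^i x$ with $x,y\in\Lambda$ and $1\le i\le N-1$. The block $y_{n+2}\cdots y_{n+K+3}=1\,0^K1$ equals $x_{n+2+i}\cdots x_{n+K+3+i}$. Hence $x$ has a maximal zero run of length exactly $K$ starting at position $n+3+i$. Since $n+3+i\ge 1$, that run does not touch the beginning of $x$, so its maximality is genuine. By the structure statement, this forces $n+3+i\equiv n+3\pmod N$, i.e. $N\mid i$, which is a contradiction.

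I expect the only delicate step to be the careful verification of the run-length claim, including runs that straddle two consecutive words $u_{b}u_{b'}$. Since each $u_b$ ends with $1$ and the block $1b1$ separates $w$ from the marker, no run can merge with $0^K$, and runs elsewhere have length at most $|w|+1$. Choosing $K>n+2$ gives enough room for this.
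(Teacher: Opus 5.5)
Your proof is correct. In every $x\in\Lambda$ the block $1\,0^K1$ occupies positions $kN+n+2,\dots,kN+n+K+3$. Every other zero run lies inside a copy of $w$, which is flanked by the final $1$ of the preceding word and by the $1$ that follows $w$, or else is the single symbol $b$ sitting between two $1$'s. Such runs therefore have length at most $\max\{n,1\}<K$. Consequently the block $1\,0^K1$ read at positions $n+2,\dots,n+K+3$ of $y=\sigma^i x$ forces $N\mid i$. Your bound $n+1$ and the requirement $K>n+2$ are more generous than needed. The remark about the run not touching the beginning of $x$ is superfluous, since the run is flanked by $1$'s. Neither point affects validity.

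The paper does not prove this statement itself; it imports it from \cite{Fan-Fan-Ryzhikov-Shen2022}, so your argument supplies a self-contained proof. The closest in-paper analogue is Lemma~\ref{new-lemma3.5}. There $\Xi_M$ is built from the two words $0^{M-1}1$ and $0^{M-2}11$, and disjointness is checked by a three-case analysis on the shift $j$. That construction realizes every prescribed order $M\neq 2$, which the paper needs in order to take $M$ a multiple of $N$. However, it leaves no room for a prescribed prefix. Your synchronizing marker $1\,0^K1$ gives up control of the order: $N=n+K+4$ necessarily exceeds the cylinder length. In exchange it accommodates an arbitrary cylinder word $w$ and gives a uniform disjointness argument valid for every $i\not\equiv 0 \pmod N$. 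Since Lemma~\ref{new-lemma3.8} only uses $\Theta\subset C$ and is indifferent to the order $L$, your version is exactly what the paper's application requires.
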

The following lemma shows that it is enough to consider the one-sided case.
\begin{lemma}\label{new-lemma3.4}
	Suppose that $(X,f)$ has a two-sided $N$-order semi-horseshoe, then it has a one-sided $N$-order semi-horseshoe.
\end{lemma}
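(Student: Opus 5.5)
Given a two-sided $N$-order semi-horseshoe $(\Lambda,f^N)$ with continuous surjection $\pi:\Lambda\to\{0,1\}^{\mathbb{Z}}$ satisfying $\pi\circ f^N=\sigma\circ\pi$, I will compose $\pi$ with the natural one-sided projection. Concretely, define
\[
p:\{0,1\}^{\mathbb{Z}}\to\{0,1\}^{\mathbb{N}},\qquad p(\cdots x_{-1}x_0x_1\cdots):=x_0x_1x_2\cdots ,
\]
which keeps only the non-negative coordinates. I then set $\tilde\pi:=p\circ\pi:\Lambda\to\{0,1\}^{\mathbb{N}}$ and claim that $(\Lambda,f^N)$ together with $\tilde\pi$ is a one-sided $N$-order semi-horseshoe.

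The verification consists of three routine checks.
\begin{enumerate}[(1)]
	\item $p$ is continuous. If $x,y\in\{0,1\}^{\mathbb{Z}}$ agree on all coordinates with $|i|<k$, then $p(x),p(y)$ agree on coordinates $0,\dots,k-1$. Hence $d(p(x),p(y))\le d(x,y)$ with the metrics of the preliminaries, so $p$ is in fact $1$-Lipschitz.
	\item $p$ is surjective. Any one-sided sequence extends to a two-sided one, for instance by padding negative coordinates with $0$.
	\item $p$ intertwines the shifts, $p\circ\sigma=\sigma\circ p$. Indeed, both sides have $i$-th coordinate $x_{i+1}$ for $i\ge0$.
\end{enumerate}

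From these, $\tilde\pi$ is a continuous surjection, being a composition of continuous surjections. Moreover
\[
\tilde\pi\circ f^N=p\circ\sigma\circ\pi=\sigma\circ p\circ\pi=\sigma\circ\tilde\pi.
\]
Since $\Lambda$ is unchanged, it is still compact and $f^N$-invariant. This gives the required one-sided $N$-order semi-horseshoe, with no real obstacle. The only point to watch is that the one-sided definition asks only for a semi-conjugacy and not injectivity, so the loss of injectivity under $p$ does not matter.
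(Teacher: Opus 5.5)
Your proposal is correct and follows the same approach as the paper: it defines the projection $g(x)_i=x_i$ for $i\in\mathbb{N}$, observes that it is a factor map from $(\{0,1\}^{\mathbb{Z}},\sigma)$ onto $(\{0,1\}^{\mathbb{N}},\sigma)$, and composes it with $\pi$. Your explicit checks of continuity, surjectivity and equivariance of $p$ are exactly the details the paper leaves as ``it can be checked.''
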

\begin{proof}
	Suppose that $(X,f)$ has a two-sided $N$-order semi-horseshoe $(\Lambda,f^N)$ and $\pi:(\Lambda,f^N)\to(\{0,1\}^{\mathbb{Z}},\sigma)$ is a factor. We define $g:(\{0,1\}^{\mathbb{Z}},\sigma)\to(\{0,1\}^{\mathbb{N}},\sigma)$ as $$g(x)_i=x_i\text{ for any }x\in\{0,1\}^{\mathbb{Z}}\text{ and }i\in\mathbb{N}.$$Then it can be checked that $g$ is a factor and thus $g\circ\pi:(\Lambda,f^N)\to(\{0,1\}^{\mathbb{N}},\sigma)$ is a factor. As a result, $(X,f)$ has a one-sided $N$-order semi-horseshoe.
\end{proof}
Next, we show how to construct a one-sided $M$-order horseshoe of $(\{0,1\}^{\mathbb{N}},\sigma)$ with disjoint steps for any $M\neq2$.
\begin{lemma}\label{new-lemma3.5}
	Given an integer $M=1$ or $M\geq 3$, $(\{0,1\}^{\mathbb{N}},\sigma)$ has a one-sided $M$-order horseshoe $(\Xi_M,\sigma^M)$ such that $$\Xi_M\cap\sigma^i(\Xi_M)=\emptyset\text{ for any }1\leq i\leq M-1.$$ 
\end{lemma}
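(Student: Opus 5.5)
For $M=1$ we take $\Xi_1=\{0,1\}^{\mathbb N}$ with $\pi=\mathrm{id}$; the disjointness condition is vacuous. So assume $M\ge 3$. The plan is to build $\Xi_M$ as the set of concatenations of two fixed words $u,v$ of length $M$. The words will be chosen so that the position of every block boundary can be read off from any point of $\Xi_M$. This will give injectivity of the coding map and the disjointness $\Xi_M\cap\sigma^i(\Xi_M)=\emptyset$ for $1\le i\le M-1$ at the same time.

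\textbf{Choice of the words.} Let $u=1\,0^{M-1}$ and $v=1\,1\,0^{M-2}$. Both have length $M$, start with $1$ and end with $0$ (here $M\ge3$ is used, since it gives $M-2\ge1$). Define
\[
\Xi_M:=\{w_0w_1w_2\cdots:\ w_k\in\{u,v\}\text{ for all }k\},
\qquad
\pi(w_0w_1\cdots)_k:=\begin{cases}0,& w_k=u,\\ 1,& w_k=v.\end{cases}
\]
The set $\Xi_M$ is closed, because it is the image of $\{0,1\}^{\mathbb N}$ under a continuous block-substitution map. It satisfies $\sigma^M(\Xi_M)=\Xi_M$, and $\pi\circ\sigma^M=\sigma\circ\pi$. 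Since $u\neq v$, the block sequence is determined by $\pi$, so $\pi$ is a bijection. It is continuous, because its $k$-th coordinate depends only on coordinates $kM,\dots,kM+M-1$. Hence $\pi$ is a homeomorphism, and $(\Xi_M,\sigma^M)$ is a one-sided $M$-order horseshoe.

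\textbf{Disjointness, the main step.} Fix $1\le i\le M-1$ and suppose $y\in\Xi_M\cap\sigma^i(\Xi_M)$, say $y=\sigma^i(x)$ with $x\in\Xi_M$. Then $y$ is itself a concatenation of blocks starting at positions $0,M,2M,\dots$. Since $x$ is a concatenation of blocks starting at multiples of $M$, $y$ is also a concatenation of blocks starting at positions $jM-i$ for $j\ge1$. We look at the pattern of $y$ around positions $M-i$ and $2M-i$, comparing the ``$1$ then zeros'' signatures.

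In every element of $\Xi_M$, each block begins with $1$. The only $1$'s occur at a block start or, for a $v$-block, at the position just after it. Consequently the gaps between consecutive block starts are exactly $M$. Moreover, every $1$ at a block start is immediately preceded by $0$ (the last letter of the previous block), except at the very first position.

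Now take the block start $p=2M-i$ in the second tiling. We have $M<p<2M$, so $p$ lies strictly inside the block $y_{[M,2M)}$ of the first tiling. It follows that $y_p=1$ and $y_{p-1}=0$, with $p-1\ge M$. Inside a single block of the first tiling, the only $1$'s are at offsets $0$ and possibly $1$. Offset $0$ would mean $p=M$, which is excluded. Offset $1$ would force $y_{p-1}=1$, contradicting $y_{p-1}=0$. So $y_p=1$ is impossible, and the intersection is empty.

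The only delicate point is checking these offsets carefully. With the words chosen above, each block contains at most two leading $1$'s and ends with $0$, so the argument reduces to this finite check. The case $M=2$ is exactly where it breaks down, because then $v=11$ has no trailing $0$.
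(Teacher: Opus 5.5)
Your proof is correct and follows the paper's approach. The paper takes all concatenations of the mirror-image words $0^{M-1}1$ and $0^{M-2}11$, notes that $(\Xi_M,\sigma^M)$ is conjugate to the full $2$-shift, and checks disjointness combinatorially, splitting into the cases $i=1$, $2\le i\le M-2$ and $i=M-1$; your single observation handles every $1\le i\le M-1$ at once. That observation is that the shifted tiling has a block start at $p=2M-i$ carrying a $1$ preceded by a $0$, and no offset in $\{1,\dots,M-1\}$ of $u$ or $v$ allows this.
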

\begin{proof}
	When $M=1$, it is clear that we can define $\Xi_M:=\{0,1\}^{\mathbb{N}}$. Now, we assume that $M\geq3$. We define $\omega_1:=0^{M-1}1$, $\omega_2:=0^{M-2}11$ and an alphabet $$\mathcal{A}:=\{\omega_1,\omega_2\},$$ then we can define a compact subset $\Xi_M$ of $\{0,1\}^\mathbb{N}$ as $$\Xi_M:=\{a=a_0a_1\cdots:a_i\in\mathcal{A}\text{ for any }i\in\mathbb{N}\}.$$ As a result, we have that $\sigma^M(a)_i=a_{i+1}$ for any $a\in\Xi_M$ and $i\in\mathbb{N}$. It is clear that $$(\Xi_M,\sigma^M)\text{ is topologically conjugate to }  (\{0,1\}^\mathbb{N},\sigma).$$ It remains to prove the disjointness.
	
	Now suppose, towards a contradiction, that there exist $1\le j\le M-1$ and
	\[
	x\in \Xi_M\cap \sigma^j(\Xi_M).
	\]
	Write $x=\sigma^jy$ with $y\in \Xi_M$. We always have that $x_i=y_i=0$ for any $0\leq i\leq M-2$, $x_{M}=y_{M}=1$ and $x_{M+1}=y_{M+1}=0$
	
	If $j=1$, then by $x_{M}=1$ and $y_{M+1}=0$, we have that $x_{M}\neq y_{M+1}$, it is a contradiction. 
	
	If $2\leq j\leq M-2$, then $2\leq M-j\leq M-2$ and by $y_{M}=1$, we have that $x_{M-j}=1$, it is a contradiction to $x_i=0$ for any $0\leq i\leq M-2$. 
	
	If $j=M-1$, then by $y_{M}=1$ we have $x_1=1$. Since $M\geq3$, we have $x_2=1$, it is a contradiction to $y_{M+1}=0$.
	
	Therefore, 
	$$\Xi_M\cap\sigma^i(\Xi_M)=\emptyset\text{ for any }1\leq i\leq M-1.$$ 
\end{proof}
\begin{remark}
	Lemma \ref{new-lemma3.5} is not hold for $M=2$. In fact, if $(\{0,1\}^{\mathbb{N}},\sigma)$ has a one-sided $2$-order horseshoe $(\Xi_2,\sigma^2)$ such that $\Xi_2\cap\sigma(\Xi_2)=\emptyset$, then $(\Xi_2,\sigma^2)$ has two fixed points and $0^\infty,1^\infty\notin\Xi_2$. As a result, $(01)^\infty,(10)^\infty\in\Xi_2$ and thus $\Xi_2\cap\sigma(\Xi_2)\neq\emptyset$, it is a contradiction.
\end{remark}
\subsection{The proof of Theorem \ref{Theorem A}} By Lemma \ref{new-lemma3.4}, we can always assume that $(X,f)$ has a one-sided $N$-order semi-horseshoe $(\Lambda,f^N)$ for some $N\in\mathbb{N}^+$ and $\pi:(\Lambda,f^N)\to(\{0,1\}^{\mathbb{N}},\sigma)$ is the factor. Choose an positive integer $M$ as follows. 
\begin{equation}\label{equ-3.1}
	M:=\begin{cases}
		N & \text{if }N\neq 2,\\
		4 & \text{if }N=2.
	\end{cases}
\end{equation}
Then $N$ divides $M$, i.e., $N\mid M$. We set $\tau:=NM$ and let $\Xi_M$ be the set given by Lemma \ref{new-lemma3.5} and $\theta_M:(\Xi_M,\sigma^M)\to(\{0,1\}^{\mathbb{N}},\sigma)$ be the conjugacy. We define  $$B_0:=\pi^{-1}(\Xi_M)\subset\Lambda\text{ and }\rho:=\theta_M\circ\pi:B_0\to\{0,1\}^{\mathbb{N}}.$$ Then $$\rho(B_0)=\{0,1\}^{\mathbb{N}}\text{ and }\rho\circ f^\tau=\theta_M\circ\pi\circ f^{\tau}=\theta_M\circ\sigma^M\circ\pi=\sigma\circ\theta_M\circ\pi=\sigma\circ\rho.$$ For any $x\in B_0$, we have $\pi(f^\tau x)=\sigma^M(\pi (x))\in \Xi_M$ and thus $f^\tau x\in B_0$. Hence, $f^\tau(B_0)\subset B_0$. We claim that $$B_0\cap f^{Nl}(B_0)=\emptyset\text{ for any }1\leq l\leq M-1.$$ In fact, if there exists $x\in B_0\cap f^{Nl}(B_0)$ for some $1\leq l\leq M-1$, say $x=f^{Nl}y$ with $y\in B_0$, then $$\pi(x)=\pi(f^{Nl}y)=\sigma^l(\pi (y))\in \Xi_M\cap \sigma^l(\Xi_M),$$contrary to Lemma \ref{new-lemma3.5}.
\begin{definition}
	A compact set $B\subset B_0$ is called \emph{admissible} if
	\begin{enumerate}
		\item [(A1)] $f^{\tau}(B)\subset B$;
		\item [(A2)] $\rho(B)=\{0,1\}^{\mathbb{N}}$.
	\end{enumerate}
\end{definition}

By the discussion above, $B_0$ is admissible. Denote $$Q:=\{1,2,\cdots,\tau-1\}\setminus \{Nl:l\in\mathbb{N}^+\}.$$ For an admissible set $B$ and $q\in Q$, define $$Y_q(B):=\rho(B\cap f^q(B))\subset\{0,1\}^{\mathbb{N}}.$$
\begin{lemma}\label{new-lemma3.8}
	Let $B\subset B_0$ be admissible. Assume that $Y_q(B)\subsetneq \{0,1\}^{\mathbb{N}}$ for any $q\in Q$, then there exists $L\in\mathbb{N}^+$ such that $(X,f)$ has a one-sided $\tau L$-order semi-horseshoe $(E,f^{\tau L})$ such that $E\subset B$ and $$E\cap f^i(E)=\emptyset\text{ for any }1\leq i\leq \tau L-1.$$ In particular, $(X,f)$ is Bohr chaotic.
\end{lemma}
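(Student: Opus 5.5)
The plan is to cut $B$ down to the preimage of a small symbolic horseshoe sitting inside a carefully chosen cylinder. The cylinder will be chosen so that its points avoid every overlap set $Y_q(B)$, $q\in Q$, at once.

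\textbf{Step 1: properties of $Y_q(B)$.} First, $Y_q(B)$ is compact, since $B\cap f^q(B)$ is compact and $\rho$ is continuous. Second, $Y_q(B)$ is forward $\sigma$-invariant. Indeed, by (A1),
\[
f^\tau(B\cap f^q(B))\subset f^\tau(B)\cap f^q(f^\tau(B))\subset B\cap f^q(B).
\]
Combined with $\rho\circ f^\tau=\sigma\circ\rho$, this gives $\sigma(Y_q(B))\subset Y_q(B)$.

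\textbf{Step 2: a cylinder avoiding all $Y_q(B)$.} By hypothesis each $Y_q(B)$ is a proper closed subset of $\{0,1\}^{\mathbb N}$. Hence its complement is a nonempty open set and contains a cylinder $[u_q]$ for some finite word $u_q$. Enumerate $Q=\{q_1,\dots,q_s\}$ and let $C:=[u_{q_1}u_{q_2}\cdots u_{q_s}]$. For $z\in C$ and each $q_t$, some shift $\sigma^{j}z$ lies in $[u_{q_t}]$, so $\sigma^j z\notin Y_{q_t}(B)$. By forward invariance, $z\notin Y_{q_t}(B)$. Thus
\[
C\cap \bigcup_{q\in Q}Y_q(B)=\emptyset.
\]
I expect this to be the main obstacle. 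The individual cylinders $[u_q]$ need not intersect, and concatenating the words only works because of the forward invariance established in Step 1.

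\textbf{Step 3: construction of $E$ and disjointness.} Apply Lemma \ref{new-lemma3.3} to $C$. This gives $L\in\mathbb N^+$ and a one-sided $L$-order horseshoe $(\Lambda',\sigma^L)$ with $\Lambda'\subset C$ and $\Lambda'\cap\sigma^k(\Lambda')=\emptyset$ for $1\le k\le L-1$. Set $E:=B\cap\rho^{-1}(\Lambda')$.

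Then $E$ is compact, and $\rho(E)=\Lambda'$ by (A2). Also $f^{\tau L}(E)\subset E$, using (A1) together with $\rho\circ f^{\tau L}=\sigma^L\circ\rho$. So $\rho|_E$ composed with the conjugacy $\Lambda'\cong\{0,1\}^{\mathbb N}$ shows that $(E,f^{\tau L})$ is a one-sided $\tau L$-order semi-horseshoe.

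For disjointness, suppose $x=f^i y$ with $x,y\in E$ and $1\le i\le \tau L-1$. Write $i=\tau k+r$ with $0\le r<\tau$, and note $y':=f^{\tau k}y\in B$. There are three cases.
\begin{enumerate}[(i)]
\item If $r=0$, then $1\le k\le L-1$ and $\rho(x)=\sigma^k\rho(y)\in\Lambda'\cap\sigma^k\Lambda'=\emptyset$, a contradiction.
\item If $r=Nl$ with $1\le l\le M-1$, then $x\in B_0\cap f^{Nl}(B_0)=\emptyset$, contradicting the claim proved before the definition of admissibility.
\item If $r\in Q$, then $x=f^r(y')\in B\cap f^r(B)$, so $\rho(x)\in Y_r(B)\cap\Lambda'\subset Y_r(B)\cap C=\emptyset$, again a contradiction.
\end{enumerate}
Hence $E\cap f^i(E)=\emptyset$ for all $1\le i\le\tau L-1$.

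\textbf{Step 4: Bohr chaoticity.} The system $(\Lambda',\sigma^L)$ is conjugate to the full shift, which has a horseshoe and is therefore Bohr chaotic by \cite{Fan-Fan-Ryzhikov-Shen2022}. By Lemma \ref{new-lemma3.1}, the system $(E,f^{\tau L})$ is Bohr chaotic. Since $\tau L$ is the first return time of $E$, Lemma \ref{new-lemma3.2} gives that $(X,f)$ is Bohr chaotic.
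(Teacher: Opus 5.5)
Your proof is correct and follows the paper's argument essentially step for step. It uses the same forward invariance of $Y_q(B)$, the same set $E=B\cap\rho^{-1}(\Lambda')$ built from Lemma~\ref{new-lemma3.3}, the same three-case analysis of $i=\tau k+r$, and the same chain through Lemmas~\ref{new-lemma3.1} and~\ref{new-lemma3.2}.

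The only variation is in Step~2. The paper uses forward invariance to show that each $Y_q(B)$ has empty interior, and then picks a cylinder in the complement of the closed, nowhere dense finite union. Your concatenation $[u_{q_1}\cdots u_{q_s}]$ produces such a cylinder directly. When $Q=\emptyset$ the concatenation is empty, so just take any cylinder.
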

\begin{proof}
	Given $q\in Q$, since $B\cap f^q(B)$ is compact and $\rho$ is continuous, we have that $Y_q(B)=\rho(B\cap f^q(B))\subset\{0,1\}^{\mathbb{N}}$ is compact. For any $x\in Y_q(B)$, say $x=\rho(y)$ with $y\in B\cap f^q(B)$. Write $y=f^q(z)$ with $z\in B$. Then by $f^\tau(B)\subset B$, $$f^\tau y= f^\tau(f^qz)=f^q(f^\tau z)\in B\cap f^q(B)$$ and $$\sigma(x)=\sigma(\rho(y))=\rho (f^\tau y)\in \rho(B\cap f^q(B))=Y_q(B).$$ Hence, $\sigma(Y_q(B))\subset Y_q(B)$.
	
	We claim that $Y_q(B)$ has empty interior. In fact, if $Y_q(B)$ contains a cylinder $C=[a_0\dots a_{m-1}]$ of $\{0,1\}^{\mathbb{N}}$, then $$\{0,1\}^{\mathbb{N}}=\sigma^m(C)\subset\sigma^m(Y_q(B))\subset Y_q(B),$$ contrary to $Y_q(B)\subsetneq \{0,1\}^{\mathbb{N}}$.
	
	We define $$Y:=\bigcup_{q\in Q}Y_q(B).$$ Since, $Q$ is finite and $\{0,1\}^{\mathbb{N}}$ is a Baire space, we have $Y$ is closed and $\{0,1\}^{\mathbb{N}}\setminus Y\neq\emptyset$. We choose a cylinder $C\subset\{0,1\}^{\mathbb{N}}\setminus Y$. By Lemma \ref{new-lemma3.3}, there exist $L\in\mathbb{N}^+$ and a one-sided $L$-order horseshoe $(\Theta,\sigma^L)$ of $(\{0,1\}^{\mathbb{N}},\sigma)$ such that $\Theta\subset C$ and $$\Theta\cap\sigma^i(\Theta)=\emptyset\text{ for any }1\leq i\leq L-1.$$ 
	Define
	\[
	E:=(\rho|_{B})^{-1}(\Theta)\subset B.
	\]
	Then $E$ is compact and $f^{\tau L}(E)\subset E$. Indeed, if $x\in E$, then $f^{\tau L}x\in B$ and 
	\[
	\rho(f^{\tau L}x)=\sigma^L(\rho(x))\in \sigma^L(\Theta)\subset \Theta,
	\]
	so $f^{\tau L}x\in E$. 
	
	We claim that $\tau L$ is the first return time of $E$. Otherwise, there exists $x\in E\cap f^n(E)$ for some $1\leq n\leq \tau L-1$. Suppose that $x=f^n y$ with $y\in E$ and  
	\[
	n=a\tau+r,\text{ where } 0\leq a\leq L-1\text{ and }0\leq r\leq\tau-1.
	\]
	
	If $r=0$, then $1\leq a\leq L-1$, and
	\[
	\rho(x)=\rho(f^{a\tau}y)=\sigma^a(\rho(y))\in \Theta\cap \sigma^a(\Theta),
	\]
	contrary to the disjointness of the steps of $\Theta$.
	
	If $r=Nl$ for some $1\leq l\leq M-1$, let $z=f^{a\tau}y\in B$. Then
	\[
	x=f^{Nl}z\in B\cap f^{Nl}(B)\subset B_0\cap f^{Nl}(B_0),
	\]
	which contradicts the fact that $B_0\cap f^{Nl}(B_0)=\emptyset$ for any $1\leq l\leq M-1$.
	
	If $r\in Q$, let again $z=f^{a\tau}y\in B$. Then
	\[
	x=f^r z\in B\cap f^r(B),
	\]
	so
	\[
	\rho(x)\in Y_r(B).
	\]
	On the other hand, $x\in E$ implies $\rho(x)\in \Theta\subset C$, contradicting $C\cap Y_r(B)=\emptyset$.
	
	Therefore, $$E\cap f^i(E)=\emptyset\text{ for any }1\leq i\leq \tau L-1.$$ Since $(\Theta,\sigma^L)$ is topologically conjugate to $(\{0,1\}^{\mathbb{N}},\sigma)$, by \cite[Theorem 1.1]{Fan-Fan-Ryzhikov-Shen2022}, it is Bohr chaotic. By Lemma \ref{new-lemma3.1} and the fact that $\rho:(E,f^{\tau L})\to(\Theta,\sigma^L)$ is a factor, we have that $(E,f^{\tau L})$ is Bohr chaotic. Finally, by Lemma \ref{new-lemma3.2}, we conclude that $(X,f)$ is Bohr chaotic.
\end{proof}
To prove Theorem \ref{Theorem A}, it is enough to find an admissible set $B$ such that $Y_q(B)\subsetneq \{0,1\}^{\mathbb{N}}$ for any $q\in Q$. If some $Y_q(B)$ equals the full shift, we can cut $B$ by the corresponding overlap time.

\begin{lemma}\label{new-lemma3.9}
	Let $B\subset B_0$ be admissible and let $q\in Q$. If $Y_q(B)=\{0,1\}^{\mathbb{N}}$, then
	\[
	B^+:=B\cap f^q(B)
	\]
	is again admissible.
\end{lemma}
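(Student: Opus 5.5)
We have to verify three things for $B^+=B\cap f^q(B)$: it is a compact subset of $B_0$, it satisfies (A1), and it satisfies (A2). Each follows directly from the definitions, the admissibility of $B$, and the hypothesis $Y_q(B)=\{0,1\}^{\mathbb{N}}$.

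\emph{Compactness and containment.} Since $B$ is compact and $f$ is continuous, $f^q(B)$ is compact. Hence $B^+$ is a compact subset of $B$, and therefore of $B_0$.

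\emph{Property (A1).} Take $y\in B^+$ and write $y=f^q(z)$ with $z\in B$. Because $B$ is admissible, $f^\tau(B)\subset B$, so $f^\tau y\in B$. Since $f^\tau$ and $f^q$ commute,
\[
f^\tau y=f^q(f^\tau z),\qquad f^\tau z\in B,
\]
so $f^\tau y\in f^q(B)$. Thus $f^\tau y\in B\cap f^q(B)=B^+$, which gives $f^\tau(B^+)\subset B^+$. This is the same computation already used in the proof of Lemma \ref{new-lemma3.8} to show $\sigma(Y_q(B))\subset Y_q(B)$.

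\emph{Property (A2).} By definition, $\rho(B^+)=\rho(B\cap f^q(B))=Y_q(B)$, and by hypothesis this equals $\{0,1\}^{\mathbb{N}}$.

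There is no real obstacle in this lemma. The only point needing care is that $\rho$ is the map already restricted to $B_0$, so that $\rho(B^+)$ makes sense. The substantive difficulty comes later, in showing that the cutting procedure terminates. I expect to handle that with an argument like the following. Suppose $B\cap f^q(B)$ had full image. Repeatedly cutting along the same $q$ produces points lying in $f^{kq}(B)$ for every $k$. Since $q\notin N\mathbb{Z}$, some multiple $kq$ is congruent modulo $\tau$ to a nonzero multiple of $N$, which reduces to the disjointness $B_0\cap f^{Nl}(B_0)=\emptyset$ and gives a contradiction. That argument belongs to the subsequent step, not to this lemma.
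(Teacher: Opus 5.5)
Your proof is correct and is essentially the paper's argument: compactness comes from intersecting compact sets, (A2) is immediate from $\rho(B^+)=Y_q(B)=\{0,1\}^{\mathbb{N}}$, and (A1) follows from $f^\tau\circ f^q=f^q\circ f^\tau$ together with $f^\tau(B)\subset B$. You also state explicitly the membership $f^\tau y\in B$, which the paper leaves implicit. Your closing remarks on termination are not needed for this lemma.
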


\begin{proof}
	The set $B^+$ is compact as the intersection of two compact sets.
	
	Since $\rho(B^+)=Y_q(B)=\{0,1\}^{\mathbb{N}}$, condition (A2) holds for $B^+$. If $x\in B^+$, say $x=f^q y$ with $y\in B$, then
	\[
	f^{\tau}x=f^{\tau}(f^qy)=f^q(f^{\tau}y)\in B\cap f^q(B)=B^+,
	\]
	so $f^{\tau}(B^+)\subset B^+$ and thus condition (A1) holds. 
	
	Therefore, $B^+$ is admissible.
\end{proof}
Starting from $B_0$, we may therefore perform the following pruning algorithm:
\begin{enumerate}[(1)]
	\item if $Y_q(B)\subsetneq \{0,1\}^{\mathbb{N}}$ for any $q\in Q$, stop;
	\item otherwise choose $q\in Q$ with $Y_q(B)=\{0,1\}^{\mathbb{N}}$ and replace $B$ by $B\cap f^q(B)$.
\end{enumerate}
To prove that this algorithm terminates, we need two lemmas.
\begin{lemma}\label{new-lemma3.10}
	Let $q_1,\cdots,q_s\in Q$, and define inductively compact sets
	\[
	B^{(0)}:=B_0,
	B^{(j+1)}:=B^{(j)}\cap f^{q_{j+1}}(B^{(j)})\text{ for any }0\leq j\leq s-1.
	\]
	Then
	\[
	B^{(s)}\subset \bigcap_{\varepsilon\in\{0,1\}^s} f^{\varepsilon_1 q_1+\cdots+\varepsilon_s q_s}(B_0).
	\]
	In particular, if a fixed $q\in Q$ occurs exactly $t$ times among $q_1,\cdots,q_s$, then
	\[
	B^{(s)}\subset B_0\cap f^q(B_0)\cap f^{2q}(B_0)\cap\cdots\cap f^{tq}(B_0).
	\]
\end{lemma}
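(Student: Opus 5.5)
The plan is an induction on $s$, proving a slightly stronger statement that is stable under the recursion. For $s\ge 0$ I will show
\[
B^{(s)}\subset \bigcap_{\varepsilon\in\{0,1\}^s} f^{\varepsilon_1 q_1+\cdots+\varepsilon_s q_s}(B_0),
\]
with the convention that for $s=0$ the intersection is over the single empty word and equals $f^0(B_0)=B_0$. The base case is then trivial.

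For the inductive step, assume the inclusion for $B^{(j)}$. I will use only two facts about images under a single map. First, $f^q(A\cap A')\subset f^q(A)\cap f^q(A')$. Second, images preserve inclusions, and $f^{q}(f^{p}(B_0))=f^{p+q}(B_0)$.

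Applying $f^{q_{j+1}}$ to the inductive hypothesis gives
\[
f^{q_{j+1}}(B^{(j)})\subset \bigcap_{\varepsilon\in\{0,1\}^j} f^{\varepsilon_1q_1+\cdots+\varepsilon_jq_j+q_{j+1}}(B_0).
\]
Intersecting this with the hypothesis for $B^{(j)}$ itself yields
\[
B^{(j+1)}=B^{(j)}\cap f^{q_{j+1}}(B^{(j)})\subset \bigcap_{\varepsilon\in\{0,1\}^{j+1}} f^{\varepsilon_1q_1+\cdots+\varepsilon_{j+1}q_{j+1}}(B_0).
\]
Here the words with $\varepsilon_{j+1}=0$ come from the first factor and those with $\varepsilon_{j+1}=1$ come from the second. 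Only inclusions are claimed, so the failure of $f^q$ to commute with intersections is harmless. This is the one point to state carefully.

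For the "in particular" part, let $i_1<\cdots<i_t$ be the indices with $q_{i_k}=q$. For each $0\le k\le t$, choose $\varepsilon$ with $\varepsilon_{i_1}=\cdots=\varepsilon_{i_k}=1$ and all other coordinates $0$. The corresponding exponent is $kq$, so $B^{(s)}\subset f^{kq}(B_0)$ for every $0\le k\le t$, which is the claimed inclusion.

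I expect no real obstacle here; the argument is bookkeeping over the binary words $\varepsilon$.
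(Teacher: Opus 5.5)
Your proposal is correct and follows essentially the same route as the paper: induction on $s$, applying $f^{q_{s+1}}$ to the inductive inclusion (using only that images preserve inclusions and intersections map into intersections), then intersecting with the hypothesis for $B^{(s)}$, and obtaining the ``in particular'' part by taking subset sums over the occurrences of $q$. The only cosmetic difference is that you start the induction at $s=0$ with the empty-word convention, whereas the paper starts at $s=1$.
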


\begin{proof}
	We argue by induction on $s$. The case $s=1$ is immediate from the definition of $B^{(1)}$.
	Assume the statement proved for $s$. Let
	\[
	S_s:=\left\{\varepsilon_1 q_1+\cdots+\varepsilon_s q_s:\ \varepsilon\in\{0,1\}^s\right\}.
	\]
	Then the induction hypothesis gives
	\[
	B^{(s)}\subset \bigcap_{t\in S_s} f^t(B_0).
	\]
	Hence
	\[
	f^{q_{s+1}}\bigl(B^{(s)}\bigr)\subset \bigcap_{t\in S_s} f^{q_{s+1}+t}(B_0).
	\]
	Since
	\[
	B^{(s+1)}=B^{(s)}\cap f^{q_{s+1}}(B^{(s)}),
	\]
	we obtain
	\[
	B^{(s+1)}\subset (\bigcap_{t\in S_s} f^t(B_0))\cap
	(\bigcap_{t\in S_s} f^{q_{s+1}+t}(B_0)),
	\]
	which is exactly the required inclusion for $s+1$.
	
	The final assertion follows by choosing subset sums that use only the occurrences of the fixed value $q$.
\end{proof}
\begin{lemma}\label{new-lemma3.11}
	Fix $q\in Q$, and write
	\[
	q=Na+r,\text{ where }0\le a\le M-1\text{ and }1\le r\le N-1.
	\]
	Let $\gcd(N,r)$ denote the greatest common divisor of $N$ and $r$. Set
	\[
	d:=\gcd(N,r)\text{ and }m_q:=\frac{N}{d}.
	\]
	Then, during the pruning algorithm, the value $q$ can be chosen at most $m_q-1$ times.
\end{lemma}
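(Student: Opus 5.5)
We argue by contradiction, combining Lemma \ref{new-lemma3.10} with the disjointness $B_0\cap f^{Nl}(B_0)=\emptyset$ for $1\le l\le M-1$. Suppose that at some stage of the pruning algorithm the value $q$ has been chosen $m_q$ times. Let $B^{(s)}$ be the current set. Every stage of the algorithm is admissible by Lemma \ref{new-lemma3.9}, so $B^{(s)}$ is nonempty, since $\rho(B^{(s)})=\{0,1\}^{\mathbb N}$. By the final assertion of Lemma \ref{new-lemma3.10} with $t=m_q$,
\[
B^{(s)}\subset B_0\cap f^{q}(B_0)\cap\cdots\cap f^{m_q q}(B_0).
\]
The task is to find some $1\le k\le m_q$ with $B_0\cap f^{kq}(B_0)=\emptyset$. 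That would force $B^{(s)}=\emptyset$, a contradiction, and so $q$ is chosen at most $m_q-1$ times.

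The natural choice is $k=m_q$. Then $m_q q=m_qNa+m_q r$, and $m_q r=\frac{N}{d}r=N\cdot\frac{r}{d}$ is a multiple of $N$. Hence $m_q q=N\ell$ with $\ell:=m_q a+r/d\ge 1$. If $\ell\le M-1$, the claim $B_0\cap f^{N\ell}(B_0)=\emptyset$ applies directly. In general, though, $\ell$ may be $\ge M$, since $q$ ranges up to $\tau-1=NM-1$. So I must reduce modulo $M$. Write $\ell=Mc+l'$ with $0\le l'\le M-1$. Using $f^{\tau}(B_0)\subset B_0$, we get $f^{N\ell}(B_0)=f^{Nl'}(f^{\tau c}(B_0))\subset f^{Nl'}(B_0)$. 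If $l'\ne0$, this gives $B_0\cap f^{N\ell}(B_0)\subset B_0\cap f^{Nl'}(B_0)=\emptyset$.

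The main obstacle is the case $l'=0$, i.e.\ $M\mid \ell$. Here the direct reduction fails, and I would need to choose a different $k$. Among the multiples $kq$ with $1\le k\le m_q$, the ones divisible by $N$ are exactly those with $m_q\mid k$, because $kq\equiv kr\pmod N$ and $N\mid kr$ iff $m_q\mid k$. So only $k=m_q$ is available within the intersection. To handle $M\mid\ell$, I would argue as follows. Because $B^{(s)}$ is $f^\tau$-invariant and contained in $f^{q}(B_0)$ and $f^{(m_q-1)q}(B_0)$, some point $x\in B^{(s)}$ can be written as $x=f^{q}y$ with $y\in B_0$. Iterating the inclusions along the chain then gives $x\in f^{jq}(B_0)$ for all $j\le m_q$, and I would try to combine the overlap $f^q$ and $f^{(m_q-1)q}$ with $\tau$-translates to produce a time $N l''$ with $1\le l''\le M-1$. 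If that fails, the fallback is to check whether the $M$ chosen in \eqref{equ-3.1} together with $q<\tau$ rules out $M\mid\ell$ directly, since $\ell=m_qa+r/d<m_qM$ with $r/d<m_q$. Concretely, $\ell\equiv m_q a+r/d\pmod M$, and one analyzes when this can vanish. I expect this divisibility bookkeeping to be the delicate point of the proof. The first thing I would try is the reduction above and a direct verification that $M\nmid\ell$ in the relevant cases.
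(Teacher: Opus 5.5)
Your framework matches the paper's argument:
\begin{itemize}
\item apply the final assertion of Lemma \ref{new-lemma3.10} with $t=m_q$;
\item write $m_q q=N\ell$ with $\ell=m_q a+r/d$;
\item use $f^{\tau}(B_0)\subset B_0$ to get $f^{N\ell}(B_0)\subset f^{Nl'}(B_0)$, where $\ell\equiv l'\pmod M$;
\item conclude from $B_0\cap f^{Nl'}(B_0)=\emptyset$ that the current admissible set is empty whenever $l'\neq 0$.
\end{itemize}
However, the proof is not finished. You leave the case $M\mid\ell$ open. For it you only offer speculative routes, such as extracting a point and combining $f^{q}$, $f^{(m_q-1)q}$ with $\tau$-translates, and none of them is carried out. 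As you correctly note, $k=m_q$ is the only $k\le m_q$ with $N\mid kq$, so there is no other time to fall back on. As written, the argument has a genuine gap exactly at the step you flag as delicate.

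The missing idea is a one-line identity:
\[
d\ell=d\,m_q a+r=Na+r=q,\qquad\text{i.e.}\qquad \ell=\frac{q}{d}.
\]
So if $M\mid\ell$, then $M\mid d\ell=q$. Since $N\mid M$ by the choice \eqref{equ-3.1}, this gives $N\mid q$, contradicting $q\in Q$. Hence $M\nmid\ell$ always, and the residue $l'$ lies in $\{1,\dots,M-1\}$. The current set then satisfies $B\subset B_0\cap f^{N\ell}(B_0)\subset B_0\cap f^{Nl'}(B_0)=\emptyset$, contradicting $\rho(B)=\{0,1\}^{\mathbb N}$. This is precisely the paper's proof: the case $M\mid\ell$ you anticipate never occurs, so no alternative construction is needed.
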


\begin{proof}
	Assume that $q$ is chosen $m_q$ times. Let $B$ be the admissible set obtained after these choices (and possibly other intermediate cuts). By Lemma~\ref{new-lemma3.10},
	\[
	B\subset B_0\cap f^{m_q q}(B_0).
	\]
	Now
	\[
	m_q q=\frac{N}{d}(Na+r)=N\left(\frac{N}{d}a+\frac{r}{d}\right)=Nk,
	\]
	where
	\[
	k:=\frac{N}{d}a+\frac{r}{d}\in\mathbb{N}^+.
	\]
	We claim that $M\nmid k$. Indeed, if $k=Mh$ for some integer $h\ge 1$, then
	\[
	q=Na+r=dk=dMh.
	\]
	Since $N\mid M$, this would imply $N\mid q$, contradicting $q\in Q$.
	
	Therefore $k\equiv \bar k\pmod M$ for some $1\leq \bar k\leq M-1$. Write $k=sM+\bar k$ with $s\ge 0$. Since $f^{\tau}(B_0)\subset B_0$ and $\tau=NM$, we have
	\[
	f^{Nk}(B_0)=f^{N\bar k}\bigl(f^{s\tau}(B_0)\bigr)\subset f^{N\bar k}(B_0).
	\]
	Hence
	\[
	B\subset B_0\cap f^{Nk}(B_0)\subset B_0\cap f^{N\bar k}(B_0)=\emptyset,
	\]
	contradicting the fact that $\rho(B)=\{0,1\}^{\mathbb{N}}$. This proves that $q$ can be chosen at most $m_q-1$ times.
\end{proof}
Now, we can prove Theorem \ref{Theorem A}.
\begin{proof}[Proof of Theorem \ref{Theorem A}]
	Start with the admissible set $B_0$. Apply the pruning algorithm described above. By Lemma \ref{new-lemma3.9}, every stage of the algorithm produces an admissible set.
	
	The set $Q$ is finite. By Lemma \ref{new-lemma3.11}, each $q\in Q$ can be chosen at most $m_q-1$ times. Therefore the pruning algorithm must terminate after finitely many steps, in fact after at most $\sum_{q\in Q}(m_q-1)$ cuts, yielding an admissible set $B_\ast\subset B_0$ such that
	\[
	Y_q(B_\ast)\subsetneq \{0,1\}^{\mathbb{N}}\text{ for any }q\in Q.
	\]
	By using Lemma \ref{new-lemma3.8} for $B_\ast$, we conclude that $(X,f)$ is Bohr chaotic.
\end{proof}

\section{Key Lemmas}\label{Section 4}
Before we give the proof of Theorem \ref{Theorem B}, in this section, we prove some key lemmas.
\subsection{Bohr chaoticity in one-sided full shifts} We consider the topological entropy of the set $\mathcal{N}_\vartheta$ in one-sided full shifts.
\begin{mainlemma}\label{Lemma A}
	Let $(\mathcal{A}^{\mathbb{N}},\sigma)$ be the one-sided full shift with $|\mathcal{A}|=2m$ for some $m\geq2$. Then for any non-trivial weight $\vartheta\in \ell^\infty(\mathbb{N})$, we have  $$h_{\mathrm{top}}(\sigma,\mathcal{N}_\vartheta(\sigma,\mathcal{A}^{\mathbb{N}}))\geq\log m.$$
\end{mainlemma}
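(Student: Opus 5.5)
We need to prove that for $|\mathcal A|=2m$, $m\ge2$, the set of $\vartheta$-correlated points has entropy at least $\log m$. The plan is to write $\mathcal A=\mathcal B\times\{0,1\}$ with $|\mathcal B|=m$, so every point is a pair $(b,\epsilon)$ with $b\in\mathcal B^{\mathbb N}$ and $\epsilon\in\{0,1\}^{\mathbb N}$. Take $\varphi(x):=\chi(x_0)$, where $\chi(b,\epsilon)=2\epsilon-1\in\{\pm1\}$; this is continuous since it depends only on the zeroth coordinate. Then
\[
\frac1N\sum_{n<N}\vartheta_n\varphi(\sigma^n x)=\frac1N\sum_{n<N}\vartheta_n(2\epsilon_n-1).
\]
Put $\epsilon^\ast_n:=1$ if $\vartheta_n\ge0$ and $\epsilon^\ast_n:=0$ otherwise. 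The resulting sum is $\frac1N\sum_{n<N}|\vartheta_n|$, whose limsup is positive by non-triviality. Hence every $x$ whose $\{0,1\}$-coordinate equals $\epsilon^\ast$ lies in $\mathcal N_\vartheta$. So $\mathcal N_\vartheta\supset Z:=\{(b,\epsilon^\ast):b\in\mathcal B^{\mathbb N}\}$.

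We then bound $h_{\mathrm{top}}(\sigma,Z)\ge\log m$. The projection $p:\mathcal A^{\mathbb N}\to\mathcal B^{\mathbb N}$ onto the first coordinate is a factor map commuting with $\sigma$, and $p(Z)=\mathcal B^{\mathbb N}$. By Lemma \ref{Lemma 2.3}, $h_{\mathrm{top}}(\sigma,Z)\ge h_{\mathrm{top}}(\sigma,\mathcal B^{\mathbb N})=\log m$, which gives the claim.

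Both steps are routine. The only point to watch is that the entropy estimate is for the noncompact, non-invariant set $Z$; this is exactly what Lemma \ref{Lemma 2.3} handles, since it applies to arbitrary subsets $E$. No step looks like a genuine obstacle, and the hypothesis $m\ge2$ is needed only so that $\log m>0$ is meaningful.
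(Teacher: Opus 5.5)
Your proof is correct and is essentially the paper's argument. The paper labels $\mathcal{A}=\{0,1,\dots,2m-1\}$, uses the parity of $x_0$ as the $\pm1$ observable, forces the parity of each coordinate by the sign of $\vartheta_n$, and projects $2k,2k+1\mapsto k$ onto $\{0,\dots,m-1\}^{\mathbb{N}}$ before applying Lemma \ref{Lemma 2.3}. That is exactly your decomposition $\mathcal{A}=\mathcal{B}\times\{0,1\}$ written in different notation.
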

\begin{proof}
	Without loss of generality, we assume that $\mathcal{A}=\{0,1,\cdots,2m-1\}$, we define $\varphi\in C(X)$ as follows: for any $x\in\mathcal{A}^{\mathbb{N}}$, if $x_0=2k$ for some $0\leq k\leq m-1$, then $\varphi(x)=1$; otherwise, $\varphi(x)=-1$.  We consider the set $Y$ consisted of all the elements $y\in\mathcal{A}^{\mathbb{N}}$ satisfying the following: for any $n\geq 0$, if $\vartheta_n\geq 0$, then $y_n=2k$ for some $0\leq k\leq m-1$; otherwise, $y_n=2k+1$ for some $0\leq k\leq m-1$. It can be checked that for any $y\in Y$ and $N\geq 1$, we always have $$\frac{1}{N}\left\vert\sum_{n=0}^{N-1}\vartheta_n\varphi(\sigma^ny)\right\vert=\frac{1}{N}\sum_{n=0}^{N-1}|\vartheta_n|.$$ Hence, $Y\subset \mathcal{N}_\vartheta(\sigma,\mathcal{A}^{\mathbb{N}})$. We denote $\mathcal{D}=\{0,1,\cdots,m-1\}$ and define a factor $\pi:(\mathcal{A}^{\mathbb{N}},\sigma)\to(\mathcal{D}^{\mathbb{N}},\sigma)$ by $\pi(x)_i=k$ if $x_i=2k\text{ or }2k+1$ for some $0\leq k\leq m-1$. Then we have $\pi(Y)=\mathcal{D}^{\mathbb{N}}$. By Lemma \ref{Lemma 2.3}, we have $$h_{\mathrm{top}}(\sigma,Y)\geq h_{\mathrm{top}}(\sigma,\pi(Y))= h_{\mathrm{top}}(\sigma,\mathcal{D}^{\mathbb{N}})=\log m.$$ As a result, $h_{\mathrm{top}}(\sigma,\mathcal{N}_\vartheta(\sigma,\mathcal{A}^{\mathbb{N}}))\geq\log m.$
\end{proof}
\subsection{Separated sets with recurrence} For dynamical system with the shadowing property, we show how to choose a large $(f;n,\varepsilon)$-separated set such that for any element $x$ in it, we have $d(x,f^nx)$ is small enough. 
\begin{mainlemma}\label{Lemma B}
	Suppose that $(X,f)$ is a dynamical system satisfying shadowing property and $\mu\in\mathcal{M}_f^e(X)$ with $h_\mu(f)>0$. Then for any $\eta<h_{\mu}(f)$, there exists $\varepsilon_*>0$ such that for any $0<\varepsilon\leq\varepsilon_*$ and any $0<\delta_*<1$, there exist $K_*,N_*\in\mathbb{N}$ with $N_*>K_*$ and $E\subset S_\mu$ with $\mu(E)>\delta_*$, such that for any $n\geq N_*$ and any $x\in E$, there exist $n-K_*\leq n_*\leq n$ and a $(f;n_*,\varepsilon_*)$-separated set $\Gamma_{n_*}$ satisfying the following
	\begin{enumerate}[(1)]
		\item $\Gamma_{n_*}\subset B(x,\varepsilon)\cap f^{-n_*}(B(x,\varepsilon))$;
		\item $\frac{\log|\Gamma_{n_*}|}{n_*}>\eta$.
	\end{enumerate}
\end{mainlemma}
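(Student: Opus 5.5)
The plan is to combine the Pfister--Sullivan separated sets (Lemma \ref{Lemma 2.4}) with the shadowing property, using transitivity-type connections inside $S_\mu$ (Lemma \ref{Lemma 2.1}) to close orbits up near $x$.

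\textbf{Step 1: constants.} Pick $\eta<\eta'<h_\mu(f)$ and apply Lemma \ref{Lemma 2.4} with $\eta'$. This gives $\delta_0,\varepsilon_0,N_0$ and $(f;n,\varepsilon_0)$-separated sets $\Gamma'_n\subset S_\mu$ with $|\Gamma'_n|\ge e^{n\eta'}$ for all $n\ge N_0$. Set $\varepsilon_*:=\varepsilon_0/4$. Given $0<\varepsilon\le\varepsilon_*$, let $\gamma:=\varepsilon/2$, so that $\gamma\le\varepsilon_0/8$. Let $\beta>0$ be the shadowing constant for $\gamma$.

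\textbf{Step 2: connections.} The subsystem $(S_\mu,f)$ is topologically transitive, since $\mu$ is ergodic with full support on $S_\mu$. Cover $S_\mu$ by finitely many balls $B(y_1,\beta/2),\dots,B(y_p,\beta/2)$ with centres $y_i\in S_\mu$. Apply Lemma \ref{Lemma 2.1} on $S_\mu$ with radius $\beta/2$, the points $y_1,\dots,y_p$, and the given $\delta_*$. This yields $E\subset S_\mu$ with $\mu(E)>\delta_*$ and a bound $K$ such that every $x\in E$ can be joined to each $y_i$, and each $y_i$ back to $x$, in at most $K$ steps. Each such link is a genuine orbit piece starting in one $\beta/2$-ball and ending in another, so the links can be concatenated into $\beta$-pseudo orbits. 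Put $K_*:=2K+2$ and choose $N_*$ large, as fixed in Step 4.

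\textbf{Step 3: construction.} Fix $n\ge N_*$ and $x\in E$, and let $m:=n-2K-2$, so $m\ge N_0$. For each $z\in\Gamma'_m$, choose $i$ with $z\in B(y_i,\beta/2)$ and $j$ with $f^{m}z\in B(y_j,\beta/2)$. Form the pseudo orbit: start at $x$ (near $x$), follow a link of length $l_{0i}\le K$ into $B(y_i,\beta/2)$, jump to $z$, follow the orbit of $z$ for $m$ steps, jump into $B(y_j,\beta/2)$, follow a link of length $l_{j0}\le K$ back to $B(x,\beta/2)$, and then continue periodically.

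The total length is $n_z=l_{0i}+m+l_{j0}\in[n-K_*,n]$. Pigeonholing over the at most $(K+1)^2$ pairs $(l_{0i},l_{j0})$ gives a single value $n_*$ shared by at least $|\Gamma'_m|/(K+1)^2$ of the points $z$. Shadowing then gives points $w_z$ with $d(w_z,x)<\gamma+\beta/2<\varepsilon$ and $d(f^{n_*}w_z,x)<\varepsilon$, so property (1) holds for the set $\Gamma_{n_*}$ of these $w_z$.

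\textbf{Step 4: separation and counting (main obstacle).} Separation needs care because the offset $l_{0i}$ varies with $z$. I handle this by pigeonholing on the pair $(l_{0i},l_{j0})$ itself rather than only on the sum, losing only a factor $(K+1)^2$. Then two points $z\ne z'$ in the same class are tracked by $w_z,w_{z'}$ along the same time window, starting at the common offset $l_{0i}$. If $d(f^{t}z,f^{t}z')>\varepsilon_0$, then $d(f^{t+l_{0i}}w_z,f^{t+l_{0i}}w_{z'})>\varepsilon_0-2\gamma\ge\varepsilon_0/2>\varepsilon_*$. So distinct $z$ give $(f;n_*,\varepsilon_*)$-separated points, which are in particular distinct.

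Finally,
\[
\frac{\log|\Gamma_{n_*}|}{n_*}\ \ge\ \frac{m\eta'-2\log(K+1)}{n}\ >\ \eta
\]
once $N_*$ is chosen large, since $m/n\to1$ and $\eta'>\eta$. This gives property (2).
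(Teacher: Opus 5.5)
Your proposal is correct and follows essentially the paper's proof: Pfister--Sullivan separated sets in $S_\mu$, a finite cover of $S_\mu$ by small balls, bounded-time connecting orbits from Lemma~\ref{Lemma 2.1}, a periodic pseudo-orbit passing near $x$, shadowing, and a pigeonhole step that costs only a factor independent of $n$. The one cosmetic difference is in that pigeonhole step: the paper pigeonholes on the pair of covering balls containing the start and end of the orbit segment, losing the square of the number of balls, so every point uses the same two connecting segments; you pigeonhole on the pair of connecting times $(l_{0i},l_{j0})$, losing $(K+1)^2$, which equally gives a common $n_*$ and aligned tracking windows.
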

\begin{proof}
	We choose $\tau>0$ such that $\eta+\tau<h_\mu(f)$. By Lemma \ref{Lemma 2.4}, there exist $\varepsilon_*>0$ and $N_1\in\mathbb{N}^+$ such that for any $n\geq N_1$,   there exists a $(f;n,  3\varepsilon_*)$-separated set $\Gamma'_n\subset  S_\mu$ with
	$$|\Gamma'_n|\geq e^{n(\eta+\tau)}.  $$ By the shadowing property, for any $0<\varepsilon\leq\varepsilon_*$, there exists $0<\delta<\varepsilon$ such that every $\delta$-pseudo orbit of $f$ can be $\frac{1}{2}\varepsilon$-traced by some point in $X$. Since $S_\mu$ is compact, there exists $m$ points $\{x_i\}_{i=1}^m$ of $S_\mu$ such that $$S_\mu\subset\bigcup_{i=1}^mB(x_i,\frac{\delta}{2}).$$ For $0<\delta_*<1$, since $(S_\mu,f)$ is topologically transitive, by Lemma \ref{Lemma 2.1}, there exist $E\subset S_\mu$ with $\mu(E)>\delta_*$ and $K\in\mathbb{N}$ such that for any $z\in E$ and any $1\leq i\leq m$, there exist $t_{0i},t_{i0}\in\mathbb{N}$ with $$0\leq t_{0i},t_{i0}\leq K$$ such that $$B(z,\frac{\delta}{2})\cap f^{-t_{0i}}(B(x_i,\frac{\delta}{2}))\neq\emptyset\text{ and }B(x_i,\frac{\delta}{2})\cap f^{-t_{i0}}(B(z,\frac{\delta}{2}))\neq\emptyset.$$ Denote $$K_*=2K\text{ and }N_*=\max\{N_1+K_*+1, \lfloor\frac{\eta K_*+2\log m}{\tau}\rfloor+K_*+1\}>K_*.$$ Then for any $n\geq N_*$, denote $\tilde{n}:=n-K_*$ we have $\tilde{n}\tau>\eta K_*+2\log m$, which means that $$\frac{\tilde{n}(\eta+\tau)-2\log m}{\tilde{n}+K_*}>\eta.$$ Since $\tilde{n}\geq N_1$,  there exists a $(f;\tilde{n},3\varepsilon_*)$-separated set $\Gamma'_{\tilde{n}}\subset  S_\mu$ with
	$$|\Gamma'_{\tilde{n}}|\geq e^{\tilde{n}(\eta+\tau)}.  $$ Now, we fix $x\in E$, then for any $1\leq i\leq m$, there exist $y_{0i}$, $y_{i0}\in S_\mu$ and $l_{0i},l_{i0}\in \mathbb{N}$ with $$0\leq l_{0i},l_{i0}\leq K$$ such that $$y_{0i}\in B(x,\frac{\delta}{2})\cap f^{-l_{0i}}(B(x_i,\frac{\delta}{2}))\text{ and }y_{i0}\in B(x_i,\frac{\delta}{2})\cap f^{-l_{i0}}(B(x,\frac{\delta}{2})).$$ Let $\mathcal{L}_{0i}$ be the orbital segment $$\mathcal{L}_{0i}:=<y_{0i},fy_{0i},\cdots,f^{l_{0i}-1}y_{0i}>$$ and $\mathcal{L}_{i0}$ be the orbital segment $$\mathcal{L}_{i0}:=<y_{i0},fy_{i0},\cdots,f^{l_{i0}-1}y_{i0}>.$$  Then, by the pigeonhole principle, there exist $1\leq i_1,i_2\leq m$ and $\Gamma\subset\Gamma'_{\tilde{n}}$ such that $$\Gamma\subset B(x_{i_1},\frac{\delta}{2})\cap f^{-{\tilde{n}}}(B(x_{i_2},\frac{\delta}{2}))\text{ and }|\Gamma|\geq\frac{|\Gamma'_{\tilde{n}}|}{m^2}.$$ For any $y\in\Gamma$, we define the following $\delta$-pseudo-orbit:$$
	\mathcal{L}_{y}:=\widetilde{\mathcal{L}}_{y}\widetilde{\mathcal{L}}_{y}\cdots\widetilde{\mathcal{L}}_{y}\cdots,\text{ where }\widetilde{\mathcal{L}}_{y}:=\mathcal{L}_{0i_1}<y,fy,\cdots,f^{\tilde{n}-1}y>\mathcal{L}_{i_20}.$$Denote $$n_*:=\tilde{n}+l_{0i_1}+l_{i_20},$$  then the length of $\widetilde{\mathcal{L}}_{y}$ is $n_*$ and $$n-K_*=\tilde{n}\leq n_*\leq \tilde{n}+K_*=n.$$ By the shadowing property, for each $y\in\Gamma$, we can find a $\theta_y\in X$ such that $\theta_y~\frac{1}{2}\varepsilon$-shadows $\mathcal{L}_y$. Denote $$\Gamma_{n_*}:=\{\theta_y: y\in\Gamma\}.$$ Since, $\frac{1}{2}\delta+\frac{1}{2}\varepsilon<\varepsilon$, it is clear that $$\Gamma_{n_*}\subset B(x,\varepsilon)\cap f^{-n_*}(B(x,\varepsilon)).$$ 
	
	Now, we will show that $\Gamma_{n_*}$ is $(f;n_*,\varepsilon_*)$-separated. In fact, for any $\theta_{y_1}, \theta_{y_2}\in\Gamma_{n_*}$ with $y_1\neq y_2\in\Gamma$,  we have that $d_{\tilde{n}}(f;y_1,y_2)>3\varepsilon_*$. This implies that $d_{n_*}(f;\theta_{y_1}, \theta_{y_2})>3\varepsilon_*-\varepsilon>\varepsilon_*$, which means that $\theta_{y_1}$ and $\theta_{y_2}$ are $(f;n_*,\varepsilon_*)$-separated. As a result, $\Gamma_{n_*}$ is $(f;n_*,\varepsilon_*)$-separated.
	
	Finally, we estimate $\frac{\log|\Gamma_{n_*}|}{n_*}$. We have that  $$\frac{\log|\Gamma_{n_*}|}{n_*}\geq\frac{\log|\Gamma|}{n_*}\geq\frac{\log|\Gamma'_{\tilde{n}}|-2\log m}{n_*}\geq\frac{\tilde{n}(\eta+\tau)-2\log m}{\tilde{n}+K_*}>\eta.$$
\end{proof}

\subsection{Points with large return times}
Given $x\in X$, $n\in\mathbb{N}^+$, $\varepsilon>0$ and a mistake function $g$, denote  $$R_n(f;x,\varepsilon):=\inf\{k\geq1:f^kx\in B_n(f;x,\varepsilon)\}$$ and $$R_n(f;g(n,\varepsilon);x,\varepsilon):=\inf\{k\geq1:f^kx\in B_n(f;g(n,\varepsilon);x,\varepsilon)\}.$$ It is clear that we always have $$R_n(f;x,\varepsilon)\geq R_n(f;g(n,\varepsilon);x,\varepsilon)\text{ for any mistake function }g(n,\varepsilon).$$ Given a nonempty subset $A\subset X$, denote $\operatorname{diam}(A):=\sup_{x,y\in A}d(x,y)$.
\begin{mainlemma}\label{Lemma C-New}
	Suppose that $(X,f)$ is a dynamical system and $g$ is a mistake function. Then for any $\mu\in\mathcal{M}_f^e(X)$ with $h_\mu(f)>0$ and any $0<\delta_\star<1$, there exists $\varepsilon_\star>0$ such that for any $0<\tilde{\varepsilon}\leq\varepsilon_\star$, any $A\subset X$ with $\mu(A)>\delta_\star$, any $L\in\mathbb{N}^+$, any $N\in\mathbb{N}^+$ and any $\varepsilon>0$, there exist $l\geq N$, $F\subset A$ with $\mu(F)>0$ and $\operatorname{diam}(F)<\varepsilon$, such that
	$$
		\mu(\{x\in F:f^{il}x\in F\text{ and }R_{l}(f;g(l,\tilde{\varepsilon});f^{il}x,\tilde{\varepsilon})>6Ll\text{ for any }0\leq i\leq 6L\})>0.$$ 
\end{mainlemma}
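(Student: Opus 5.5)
The plan is to reduce the statement to a single pointwise claim: for $\mu$-a.e.\ $x$, the mistake return time $R_l(f;g(l,\tilde\varepsilon);x,\tilde\varepsilon)$ eventually exceeds $6Ll$. Once that holds, Furstenberg's multiple recurrence theorem produces $l$. The order of quantifiers is what makes this work. $\varepsilon_\star$ depends only on $\mu$ and $g$, and the exceptional set is controlled uniformly in $l$. So $F$ can be chosen \emph{before} $l$, and $l$ is then supplied by recurrence applied to $F$.

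\textbf{Step 1 (choice of $\varepsilon_\star$ and exponential return times).} Fix a finite measurable partition $\mathcal P$ whose atoms have small diameter and $\mu$-null boundaries, with $h_\mu(f,\mathcal P)>h_\mu(f)/2>0$. If $f^kx\in B_l(f;g(l,\tilde\varepsilon);x,\tilde\varepsilon)$ and $\tilde\varepsilon$ is small compared with the Lebesgue number of $\mathcal P$ (away from a set of small measure near $\partial\mathcal P$), then the $\mathcal P$-names of $x$ on $[0,l)$ and on $[k,k+l)$ are close in normalized Hamming distance. The error is at most $g(l,\tilde\varepsilon)/l\to0$ plus the frequency of visits near $\partial\mathcal P$, which is small by the ergodic theorem. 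I would then invoke the approximate-matching version of the Ornstein--Weiss return-time theorem (Ornstein--Weiss, Marton--Shields). It says that for ergodic $\mu$ and Hamming tolerance $\beta$, the first return of an approximate copy of the $l$-name is, for a.e.\ $x$ and large $l$, at least $e^{l(h_\mu(f,\mathcal P)-c(\beta))}$, where $c(\beta)\to0$ as $\beta\to0$. I choose $\varepsilon_\star$ so that $c(\beta)<h_\mu(f,\mathcal P)/2$. Since $e^{lh/4}>6Ll$ for large $l$, for every $0<\tilde\varepsilon\le\varepsilon_\star$ and every $L$,
\[
H_{l_0}:=\{x:\ R_l(f;g(l,\tilde\varepsilon);x,\tilde\varepsilon)>6Ll\ \text{for all } l\ge l_0\}
\]
increases to a set of full measure as $l_0\to\infty$.

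\textbf{Step 2 (choosing $F$).} Given $A$ with $\mu(A)>\delta_\star$, and $L,N,\varepsilon$, I pick $l_0\ge N$ with $\mu(H_{l_0})>1-(\mu(A)-\delta_\star)$. Then $\mu(A\cap H_{l_0})>0$. I cover $X$ by finitely many sets of diameter $<\varepsilon$ and let $F$ be the intersection of $A\cap H_{l_0}$ with one of them of positive measure.

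\textbf{Step 3 (recurrence).} Furstenberg's multiple recurrence theorem, applied to $F$ with the $6L+1$ times $0,n,2n,\dots,6Ln$, gives
\[
\mu\Bigl(\bigcap_{i=0}^{6L}f^{-in}F\Bigr)>0
\]
for a set of $n$ of positive lower density, hence for some $l=n\ge l_0\ge N$. Every $x$ in this intersection has $f^{il}x\in F\subset H_{l_0}$, so $R_l(f;g(l,\tilde\varepsilon);f^{il}x,\tilde\varepsilon)>6Ll$ for all $0\le i\le 6L$. This proves the claimed positivity.

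\textbf{Main obstacle.} The main obstacle is Step 1: making the mistake dynamical ball compatible with symbolic approximate matching. This needs care with boundary effects of $\mathcal P$. It also needs a check that the allowed error $g(l,\tilde\varepsilon)$ is sublinear, so the Hamming tolerance stays below the threshold where the exponential lower bound on return times survives. If the approximate Ornstein--Weiss theorem is not convenient to quote, I would prove the needed bound directly, using Shannon--McMillan--Breiman together with a Borel--Cantelli estimate. The approach is to count typical $l$-names and their Hamming $\beta$-neighbourhoods, which have cardinality $e^{l\,O(\beta\log(1/\beta))}$. One then bounds $\mu\{x: \text{some } 1\le k\le 6Ll \text{ gives an approximate repeat}\}$ by roughly $6Ll\cdot e^{-l(h-O(\beta\log(1/\beta)))}$, with separate handling for $k<l$ (overlapping windows, where a near-periodic name has small entropy) and $k\ge l$.
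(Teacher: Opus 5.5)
Your overall strategy matches the paper's, and your Steps 2 and 3 are correct. Both arguments find a large set on which $R_l(f;g(l,\tilde\varepsilon);\cdot,\tilde\varepsilon)>6Ll$ for all large $l$, intersect it with $A$, cut out a piece $F$ of diameter $<\varepsilon$, and let multiple recurrence supply $l$.

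\textbf{Step 1: how the input is obtained.} This is where you genuinely differ. The paper uses no partitions or symbolic matching. It quotes the Rousseau--Varandas--Zhao theorem (Lemma~\ref{Lemma 0.1}), which already gives $\lim_{\varepsilon\to0}\liminf_n\frac1n\log R_n(f;g(n,\varepsilon);x,\varepsilon)=h_\mu(f)$ for $\mu$-a.e.\ $x$, for mistake balls. It then applies Egoroff twice:
\begin{enumerate}[(1)]
\item to $\varphi_m(x)=\inf_{0<\varepsilon\le 1/m}\liminf_n(\cdots)$, to fix $\varepsilon_\star$;
\item in $n$, for the fixed $\tilde\varepsilon$.
\end{enumerate}
The $\varepsilon$-threshold from Lemma~\ref{Lemma 0.1} depends on $x$, so the resulting uniform set has measure only $>1-2\tau$, with $3\tau<\delta_\star$. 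That is why $\varepsilon_\star$ depends on $\delta_\star$ and why the hypothesis $\mu(A)>\delta_\star$ is there.

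Your partition route controls the boundary error monotonically in $\tilde\varepsilon$. Granted a symbolic bound with a deterministic $c(\beta)$, as you state it, this gives a full-measure good set for every $\tilde\varepsilon\le\varepsilon_\star$, so you only need $\mu(A)>0$. The price is that you re-prove the lower-bound half of Lemma~\ref{Lemma 0.1}, which the paper simply cites.

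\textbf{Step 3.} You use the positive-density form of Furstenberg's theorem to get $l\ge l_0$. The paper uses only the existence of one $n$ (Lemma~\ref{Lemma 0.2}), applies it with $6LN_3$ in place of $6L$, and sets $l=N_3n$. Both work. Your order of choices also avoids a small slip in the paper: there $\varepsilon_\star=1/N_1$ is chosen with $N_1\ge N$, although $N$ is quantified after $\varepsilon_\star$.

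\textbf{Warning: your fallback ``direct'' proof of Step 1 would not work.} You propose to bound, for each fixed shift $k$, the measure of an approximate repeat of a typical $l$-name by $e^{-l(h-O(\beta\log(1/\beta)))}$, and then sum over $k\le 6Ll$.
\begin{enumerate}[(1)]
\item Counting names and Hamming balls can only give such a bound when the windows $[0,l)$ and $[k,k+l)$ decouple, for example under $\psi$-mixing. For a general ergodic $\mu$, $\mu([w]\cap\sigma^{-k}[w'])$ can be comparable to $\mu([w])$.
\item Concretely, one can build ergodic positive-entropy processes that, at a sparse sequence of scales $m_j$, duplicate a block of length $m_j$ with probability of order $j^{-2}$. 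At $l=m_j/4$ such a process has an exact repeat at some $k\le m_j\le 6Ll$ with probability $\gtrsim j^{-2}$, far larger than $e^{-lh}$.
\item The almost-sure lower bound is still true. But the known proof (Ornstein--Weiss) is a coding/packing argument: short returns at positive frequency would let one compress typical sample paths below the entropy. It does not come from a union bound.
\item Your remark that near-periodic names have small entropy also covers only $k\ll l$, not $k$ comparable to $l$.
\end{enumerate}
So Step 1 must rest on the Ornstein--Weiss approximate recurrence theorem. More simply, use Lemma~\ref{Lemma 0.1} followed by the paper's Egoroff step.
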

Directly from Lemma \ref{Lemma C-New}, we have
\begin{lemma}\label{Lemma C}
	Suppose that $(X,f)$ is a dynamical system. Then for any $\mu\in\mathcal{M}_f^e(X)$ with $h_\mu(f)>0$ and any $0<\delta_\star<1$, there exists $\varepsilon_\star>0$ such that for any $A\subset X$ with $\mu(A)>\delta_\star$, any $L\in\mathbb{N}^+$, any $N\in\mathbb{N}^+$ and any $\varepsilon>0$, there exist $l\geq N$, $F\subset A$ with $\mu(F)>0$ and $\operatorname{diam}(F)<\varepsilon$, such that $$\mu(\{x\in F:f^{il}x\in F\text{ and }R_{l}(f;f^{il}x,\varepsilon_\star)>6Ll\text{ for any }0\leq i\leq 6L\})>0.$$
\end{lemma}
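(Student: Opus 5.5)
The plan is to derive the statement as a direct specialization of Lemma \ref{Lemma C-New}. I will fix one mistake function once and for all, take $\tilde{\varepsilon}=\varepsilon_\star$, and compare the two return times.

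First I fix a mistake function that does not depend on any of the data. The simplest choice is $g(n,\varepsilon):=0$ for all $n\in\mathbb{N}^+$ and $\varepsilon>0$. It is nondecreasing in $n$ and satisfies $g(n,\varepsilon)/n\to0$, so it is admissible. Given $\mu\in\mathcal{M}_f^e(X)$ with $h_\mu(f)>0$ and $0<\delta_\star<1$, I apply Lemma \ref{Lemma C-New} with this $g$ and obtain $\varepsilon_\star>0$. This $\varepsilon_\star$ depends only on $\mu$, $\delta_\star$ and the fixed $g$, which matches the quantifier order required in the statement.

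Next, let $A$, $L$, $N$ and $\varepsilon$ be given as in the statement, with $\mu(A)>\delta_\star$. I apply Lemma \ref{Lemma C-New} with $\tilde{\varepsilon}:=\varepsilon_\star$ and these data. This yields $l\geq N$ and $F\subset A$ with $\mu(F)>0$ and $\operatorname{diam}(F)<\varepsilon$. Moreover, the set $G$ of points $x\in F$ such that, for every $0\le i\le 6L$, we have $f^{il}x\in F$ and $R_{l}(f;g(l,\varepsilon_\star);f^{il}x,\varepsilon_\star)>6Ll$, has positive $\mu$-measure.

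It then suffices to show that $G$ is contained in the set appearing in the statement. For this I use the general inequality noted before Lemma \ref{Lemma C-New}: $R_l(f;y,\varepsilon_\star)\ge R_l(f;g(l,\varepsilon_\star);y,\varepsilon_\star)$ for every $y$. It holds because $B_l(f;y,\varepsilon_\star)\subset B_l(f;g(l,\varepsilon_\star);y,\varepsilon_\star)$, since $\Lambda_l$ itself belongs to $I(f;g(l,\varepsilon_\star);l,\varepsilon_\star)$. With $g\equiv0$ the two balls in fact coincide. Applying this with $y=f^{il}x$ shows that every $x\in G$ satisfies $R_l(f;f^{il}x,\varepsilon_\star)>6Ll$ for all $0\le i\le 6L$. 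Hence the set in the statement has positive measure.

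There is no real obstacle in this derivation. The only point needing care is checking that $\varepsilon_\star$ is chosen before $A$, $L$, $N$ and $\varepsilon$, which holds because $g$ is fixed independently of them.
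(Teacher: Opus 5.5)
Your proposal is correct and follows the paper's route: the paper obtains this lemma ``directly from Lemma~\ref{Lemma C-New}'', which amounts to fixing a mistake function, taking $\tilde{\varepsilon}=\varepsilon_\star$, and using the inequality $R_l(f;y,\varepsilon_\star)\ge R_l(f;g(l,\varepsilon_\star);y,\varepsilon_\star)$ that the paper notes just before Lemma~\ref{Lemma C-New}. Your choice $g\equiv 0$, under which the two return times coincide, and your check that $\varepsilon_\star$ is chosen before $A$, $L$, $N$ and $\varepsilon$ are both fine.
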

Before we give the proof of Lemma \ref{Lemma C-New}, we introduce some useful results. The first result obtained by Rousseau et al. \cite{Rousseau-Varandas-Zhao-2012} shows the relationship between the metric entropy of an ergodic measure and the exponential growth rate of $R_n(f;g(n,\varepsilon);x,\varepsilon)$. 
\begin{lemma}\cite[Theorem A]{Rousseau-Varandas-Zhao-2012}\label{Lemma 0.1}
	Suppose that $(X,f)$ is a dynamical system and $g$ is a mistake function. Then for any $\mu\in\mathcal{M}_f^e(X)$, we have that the limit
	$$
	\underline{h}(f, x):=\lim _{\varepsilon \to 0} \liminf _{n \to \infty} \frac{1}{n} \log R_n(f;g(n,\varepsilon);x,\varepsilon)
	$$	
	exists for $\mu$-almost every $x$ and coincides with the metric entropy $h_\mu(f)$.
\end{lemma}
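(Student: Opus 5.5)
The plan is to prove the two inequalities
\[
\lim_{\varepsilon\to0}\liminf_{n\to\infty}\frac1n\log R_n(f;g(n,\varepsilon);x,\varepsilon)\le h_\mu(f)
\quad\text{and}\quad
\lim_{\varepsilon\to0}\liminf_{n\to\infty}\frac1n\log R_n(f;g(n,\varepsilon);x,\varepsilon)\ge h_\mu(f)
\]
separately for $\mu$-a.e.\ $x$. Existence of the limit in $\varepsilon$ is then automatic, since the inner quantity is monotone in $\varepsilon$. Throughout, I would replace Bowen balls by dynamical cylinders of a finite partition $\mathcal P$ whose boundary has $\mu$-measure zero and whose atoms have diameter $<\varepsilon$. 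For $\mu$-a.e.\ $x$, the orbit spends only a small density of times near $\partial\mathcal P$. This lets me compare $B_n(f;x,\varepsilon)$ with $\mathcal P^n(x)$, up to $o(n)$ coordinates where the comparison fails, at the cost of an $e^{o(n)}$ combinatorial factor.

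\emph{Upper bound.} Since $R_n(f;g(n,\varepsilon);x,\varepsilon)\le R_n(f;x,\varepsilon)$, it suffices to treat genuine Bowen balls. I would invoke the Ornstein--Weiss return-time theorem for the partition $\mathcal P$: for $\mu$-a.e.\ $x$, the first return of $x$ to its cylinder $\mathcal P^n(x)$ grows like $e^{n h_\mu(f,\mathcal P)}$. Since $\mathcal P^n(x)\subset B_n(f;x,\varepsilon)$, this gives
\[
\liminf_{n\to\infty}\frac1n\log R_n(f;x,\varepsilon)\le h_\mu(f,\mathcal P)\le h_\mu(f).
\]
Letting $\varepsilon\to0$ yields the upper bound.

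\emph{Lower bound.} This is the real content. Fix $\gamma>0$. First I would establish a mistake version of Brin--Katok: for $\mu$-a.e.\ $x$ and small $\varepsilon$,
\[
\mu\bigl(B_n(f;g(n,\varepsilon);x,\varepsilon)\bigr)\le e^{-n(h_\mu(f)-\gamma)}\quad\text{for large } n.
\]
The argument is that a mistake ball is a union over at most $\binom{n}{g(n,\varepsilon)}=e^{o(n)}$ index sets $\Lambda$. For each $\Lambda$, the corresponding set is covered by cylinders of $\mathcal P$ that agree with $\mathcal P^n(x)$ except on $o(n)$ coordinates, and there are again only $e^{o(n)}$ of these. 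Shannon--McMillan--Breiman then bounds each such cylinder by $e^{-n(h_\mu(f,\mathcal P)-\gamma/2)}$ on a set of large measure.

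Next, consider the bad set
\[
G_n:=\{x:\ R_n(f;g(n,\varepsilon);x,\varepsilon)<e^{n(h_\mu(f)-2\gamma)}\}.
\]
I would show that $\sum_n\mu(G_n\cap\text{good set})<\infty$ and conclude by Borel--Cantelli. To do this, I would split the return time $k$ into long returns $k\ge n$ and short returns $k<n$.

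For long returns, I would cover the good set by $e^{n(h_\mu(f)+\gamma)}$ pieces $P$ of small diameter and small mistake-measure. If $f^kx\in B_n(f;g(n,\varepsilon);x,\varepsilon)$ with $x\in P$, then $f^kx$ lies in a fixed mistake ball $\widehat P$ around $P$, of radius $2\varepsilon$. By invariance, summing over pieces, and summing over $k<e^{n(h_\mu(f)-2\gamma)}$, the total measure is bounded by roughly
\[
e^{n(h_\mu(f)-2\gamma)}\cdot e^{-n(h_\mu(f)-\gamma)}\cdot e^{o(n)},
\]
which is summable in $n$.

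\emph{Main obstacle: short returns $k<n$.} Here the orbit segments of $x$ and $f^kx$ overlap, so independence fails. I would first try a quasi-periodicity argument. If $f^kx$ $\varepsilon$-shadows $x$ on all but $g(n,\varepsilon)$ of the times $0,\dots,n-1$, then the $\mathcal P$-name of $x$ of length $n$ is essentially $k$-periodic outside $o(n)$ positions. Hence it is determined by its first $k$ symbols together with $e^{o(n)}$ choices of defects. If $k\le\gamma n$, this forces $\mu(\mathcal P^n(x))$ to be at least $e^{-k\log|\mathcal P|-o(n)}$, which contradicts SMB since $h_\mu(f)>0$. For $\gamma n<k<n$, a word of length $n$ with period about $k$ splits into about $n/k$ nearly equal blocks. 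This again gives an information deficit of order $(n-k)h_\mu(f)$, contradicting SMB.

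Making the bookkeeping of mistakes compatible with the period structure is the delicate part. It is there that the hypothesis $g(n,\varepsilon)/n\to0$ is used essentially.
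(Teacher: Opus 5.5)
The paper gives no proof of this lemma; it is quoted from Rousseau--Varandas--Zhao, so your argument has to stand on its own. Your upper bound is correct. With atoms of diameter $<\varepsilon$ you have $\mathcal P^n(x)\subset B_n(f;x,\varepsilon)\subset B_n(f;g(n,\varepsilon);x,\varepsilon)$, and Ornstein--Weiss applied to countably many partitions gives $\limsup_n\frac1n\log R_n(f;g(n,\varepsilon);x,\varepsilon)\le h_\mu(f)$ for every $\varepsilon$ and a.e.\ $x$.

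\textbf{The gap is in the long-return step of your lower bound, not in the short returns.} Invariance only yields $\mu(P\cap f^{-k}\widehat P)\le\min\{\mu(P),\mu(\widehat P)\}$. Summing $\mu(\widehat P)$ over $k<e^{n(h_\mu(f)-2\gamma)}$ and over the $e^{n(h_\mu(f)+\gamma)}$ pieces gives $e^{nh_\mu(f)+o(n)}$. Summing $\mu(P)$ instead gives only the trivial bound $1$. Your displayed quantity is the bound for a single piece. To obtain it for the union you would need $\mu(P\cap f^{-k}\widehat P)\lesssim\mu(P)\mu(\widehat P)$, and ergodicity gives no such decorrelation, whether or not the windows overlap. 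Invariance plus union bounds cannot close this; it is exactly the difficulty the lower half of the Ornstein--Weiss theorem has to overcome.

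\textbf{The short-return count also falls short.} Quasi-periodicity bounds the \emph{number} of admissible names; it does not force $\mu(\mathcal P^n(x))$ to be large, so that sentence is backwards. Moreover, the deficit $(n-k)h_\mu(f)$ is swamped by the $\gamma n$ slack in Shannon--McMillan--Breiman once $n-k=O(\gamma n/h_\mu(f))$. So lags near $n$, and all lags $\ge n$, remain uncontrolled.

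\textbf{The missing idea is to condition on the future name.} Write $\mathcal P_a^b:=\bigvee_{i=a}^{b}f^{-i}\mathcal P$, and $x_{[a,b)}$ for the $\mathcal P$-name of $x$ over times $a,\dots,b-1$. Choose the parameters in this order:
\begin{enumerate}
\item[(i)] $\mathcal P$, with $\mu(\partial\mathcal P)=0$ and $h_\mu(f,\mathcal P)>h_\mu(f)-\gamma$;
\item[(ii)] then $\delta$;
\item[(iii)] then $\varepsilon$, so small that $\{y:B(y,\varepsilon)\not\subset\mathcal P(y)\}$ has measure $<\delta$.
\end{enumerate}
So the radius is small relative to the partition, not the reverse as in your setup, and you get $\delta n$ defects rather than $o(n)$. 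For a.e.\ $x$ and large $n$, a return at lag $k$ into the mistake ball makes $x_{[k,k+n)}$ and $x_{[0,n)}$ agree off at most $g(n,\varepsilon)+\delta n\le 2\delta n$ positions.

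Given $\mathcal P_n^\infty(x)$, this pins $x_{[0,n)}$ to at most $e^{c(\delta)n}$ words, where $c(\delta)\to0$. For $k\ge n$ these are the words in a Hamming ball around $x_{[k,k+n)}$. For $k<n$ they come from $x_j=x_{j+k}$ by descending recursion from $j=n-1$, up to the defects. Hence, uniformly in the lag, $\{R_n\le K\}$ puts $x_{[0,n)}$ into a $\mathcal P_n^\infty$-measurable set of at most $Ke^{c(\delta)n}$ words.

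Since $I(\mathcal P_0^{n-1}\mid\mathcal P_n^\infty)=\sum_{j<n}I(\mathcal P\mid\mathcal P_1^\infty)\circ f^j$, Birkhoff gives $\mu(\mathcal P_0^{n-1}(x)\mid\mathcal P_n^\infty)(x)\le e^{-n(h_\mu(f,\mathcal P)-\gamma)}$ eventually a.e. The intersection of the two events therefore has measure at most $Ke^{c(\delta)n-n(h_\mu(f,\mathcal P)-\gamma)}$. This is summable for $K=e^{n(h_\mu(f,\mathcal P)-3\gamma)}$ and $c(\delta)<\gamma$, and Borel--Cantelli finishes.

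This route makes the mistake Brin--Katok lemma unnecessary. Your sketch of that lemma would in any case have to handle heavy, atypical cylinders that are Hamming-close to $\mathcal P^n(x)$.

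Finally, the mistake ball need not be monotone in $\varepsilon$, because $g$ is not assumed monotone in $\varepsilon$. Monotonicity is not needed, though: the event above involves $2\delta n$ defects independently of $\varepsilon$. So the lower bound holds for all $\varepsilon<\varepsilon_0(\gamma)$ off a single null set, and the limit exists.
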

The second result is a well-known version of multiple recurrence theorem due to Furstenberg, for example, see \cite[Theorem 7.15]{Furstenberg-1981}.
\begin{lemma}[Furstenberg]\label{Lemma 0.2}
	Suppose that $(X,f)$ is a dynamical system. Then for any $\mu\in\mathcal{M}_f(X)$, any $A\subset X$ with $\mu(A)>0$ and any $k\in\mathbb{N}$, there exists $n\geq 1$ such that $$\mu(A\cap f^{-n}(A)\cap f^{-2n}(A)\cap\cdots\cap f^{-kn}(A))>0.$$
\end{lemma}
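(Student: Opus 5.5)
We prove Lemma \ref{Lemma 0.2} through Furstenberg's correspondence-free ergodic-theoretic route: a structure theorem for measure-preserving systems, together with the fact that the multiple recurrence property (SZ) lifts along the extensions in that structure. \emph{Reductions first.} Since $f$ is only continuous and may fail to be invertible, we pass to the natural extension $(\tilde X,\tilde f,\tilde\mu)$ with projection $p$. Then $\tilde\mu(p^{-1}A)=\mu(A)$, and $p^{-1}(f^{-jn}A)=\tilde f^{-jn}p^{-1}A$, so it suffices to treat an invertible measure-preserving system on a standard Borel space. By ergodic decomposition, $\mu(A)>0$ forces $\mu_\omega(A)>0$ for a positive-measure set of ergodic components, and a positive integral of $\mu_\omega(\bigcap_j f^{-jn}A)$ over that set suffices. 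So we may also assume $\mu$ is ergodic. In fact we prove the stronger statement
\[
\liminf_{N\to\infty}\frac1N\sum_{n=1}^N \mu\Bigl(A\cap f^{-n}A\cap\cdots\cap f^{-kn}A\Bigr)>0,
\]
which is the form that survives the lifting arguments.

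\emph{Structure theorem.} Every ergodic invertible system $\mathbf X$ admits a transfinite tower of factors $\mathbf X_0\leftarrow\mathbf X_1\leftarrow\cdots\leftarrow\mathbf X_\eta$. Here $\mathbf X_0$ is trivial, each successor $\mathbf X_{\xi+1}\to\mathbf X_\xi$ is a compact extension, limit stages are inverse limits, and $\mathbf X\to\mathbf X_\eta$ is relatively weak mixing. We build this by Zorn's lemma on the set of factors with property SZ. A maximal element $\mathbf Y$ must satisfy one of two alternatives: either $\mathbf X\to\mathbf Y$ is relatively weak mixing, or there is an intermediate nontrivial compact extension of $\mathbf Y$. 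The second alternative is produced from the relative Hilbert--Schmidt kernels in $L^2(\mathbf X\times_{\mathbf Y}\mathbf X)$.

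\emph{Lifting SZ.} We check three facts.
\begin{enumerate}[(i)]
	\item SZ holds for the trivial system.
	\item SZ passes to inverse limits. Approximate $A$ by a set measurable with respect to an earlier factor and use a conditional-measure density argument: restrict to fibres where $\mathbb E(1_A\mid\mathbf X_\xi)>1-\frac{1}{2(k+1)}$.
	\item SZ lifts through relatively weak mixing extensions. Here the key claim is that
	\[
	\frac1N\sum_{n}\Bigl\|\int\prod_{j} f^{jn}g_j - \prod_j f^{jn}\mathbb E(g_j\mid\mathbf Y)\Bigr\|_{L^2(\mathbf Y)}\to0 .
	\]
	We prove it by induction on $k$ via the van der Corput lemma, using that $\mathbf X\times_{\mathbf Y}\mathbf X$ is ergodic relative to $\mathbf Y$.
\end{enumerate}

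The remaining and hardest step is that SZ lifts through a compact extension $\mathbf X\to\mathbf Y$. Here I would follow Furstenberg's argument as follows.
\begin{enumerate}[(a)]
	\item Shrink $A$ to a subset $A'$ with $\mu(A')>0$ whose indicator is almost periodic relative to $\mathbf Y$. This means that on a positive-measure set of fibres, the orbit $\{f^n1_{A'}\}$ is covered by finitely many relative $L^2$-balls of radius $\epsilon$.
	\item Apply the van der Waerden theorem to colour $n$ by the nearest ball.
	\item Apply SZ in $\mathbf Y$ to the base set where this finite covering holds. Since the fibre measure of $A'$ is near $1$ on that set, we obtain a positive-density set of $n$ for which $A'\cap\cdots\cap f^{-kn}A'$ has positive measure.
\end{enumerate}
The delicate point is making the colouring uniform over fibres with measurable selections, so that the positive lower density in $\mathbf Y$ transfers to $\mathbf X$. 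I expect this step to be the main obstacle. Given these facts, transfinite induction shows $\mathbf X_\eta$ has SZ and then $\mathbf X$ does, which gives some $n\ge1$ with $\mu(A\cap f^{-n}A\cap\cdots\cap f^{-kn}A)>0$.
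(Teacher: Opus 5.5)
The paper gives no argument for this lemma; it quotes it from Furstenberg's book. There it is proved by essentially the structure-theoretic scheme you outline: SZ for the trivial factor, stability under inverse limits, lifting through relatively weakly mixing and compact extensions, and Zorn/transfinite induction. So your route is right. However, the step you yourself flag as the main obstacle, lifting SZ through a compact extension $\mathbf X\to\mathbf Y$, does not work as you describe it. Below, $S$ is the base map and $\|\cdot\|_y$ the fibre $L^2$-norm.

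\emph{First error.} You cannot arrange that the fibre measures $\mu_y(A')$ are near $1$. If that were possible on a base set of positive measure, the density argument of your inverse-limit step would already finish, with no need for almost periodicity or van der Waerden. For $\mathbf Y$ trivial and $\mathbf X$ an irrational rotation, $\mu_y(A')=\mu(A')$, which can be small. All you have is $\mu_y(A')>a>0$ for $y$ in some $B\subset Y$ with $\nu(B)>0$.

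\emph{Second error.} Colouring the recurrence parameter $n$ itself, fibre by fibre, gives a fibre-dependent progression $a_0,a_0+d,\dots,a_0+kd$. Nothing guarantees that the base point $S^{a_0}y$ lies in $B$, which is where the lower bound on the fibre measure lives.

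\emph{The missing idea is to dilate.} Choose $\varepsilon$ with $4k\varepsilon^2<a/2$, and let $g_1,\dots,g_M$ be the centres furnished by almost periodicity of $1_{A'}$ (valid for a.e.\ $y$ and every $n$). Let $L=W(M,k+1)$ be the van der Waerden number. Apply SZ in $\mathbf Y$ to $B$ with $L$ in place of $k$. This gives a set of $n$ of positive lower density for which $B_n:=B\cap S^{-n}B\cap\dots\cap S^{-Ln}B$ has $\nu(B_n)\ge c>0$.

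For $y\in B_n$, colour $m\in\{1,\dots,L\}$ by the least $s$ with $\|f^{mn}1_{A'}-g_s\|_y<\varepsilon$. Van der Waerden gives $m_0,r\ge1$ with $m_0+kr\le L$ and a monochromatic progression. Then $y'=S^{m_0n}y$ lies in $B$ and satisfies $\|f^{jrn}1_{A'}-1_{A'}\|_{y'}<2\varepsilon$ for $0\le j\le k$. Hence $\mu_{y'}(A'\cap f^{-rn}A'\cap\dots\cap f^{-krn}A')>a-4k\varepsilon^2>a/2$.

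The ``measurable selection'' is only a pigeonhole over the at most $L^2$ pairs $(m_0,r)$. It yields $\mu(A'\cap f^{-rn}A'\cap\dots\cap f^{-krn}A')\ge ac/(2L^2)$. Note that the recurrence time is $rn$ with $1\le r\le L$, not $n$. Since $n\mapsto rn$ is at most $L$-to-one and $rn\le Ln$, the good times still have positive lower density, which is SZ for $\mathbf X$.

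A smaller point concerns your ergodic-decomposition reduction: the good $n$ depends on the component. You must therefore apply Fatou to the Ces\`aro form, $\liminf_N\frac1N\sum_{n\le N}\mu(\cdot)\ge\int\liminf_N\frac1N\sum_{n\le N}\mu_\omega(\cdot)\,d\omega>0$. Alternatively, drop that reduction, since the structure theory does not need ergodicity.
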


Now, we give the proof of Lemma \ref{Lemma C-New}.
\begin{proof}[Proof of Lemma \ref{Lemma C-New}]
	Since $\mu\in\mathcal{M}_f^e(X)$ and $g$ is a mistake function, by Lemma \ref{Lemma 0.1}, we have that $$\underline{h}(f, x)=\lim _{\varepsilon \to 0} \liminf _{n \to \infty} \frac{1}{n} \log R_n(f;g(n,\varepsilon);x,\varepsilon)=h_\mu(f)\text{ for } \mu\text{-almost every } x.$$ For any $m\in\mathbb{N}^+$, denote $$\varphi_m(x):=\inf_{0<\varepsilon\leq\frac{1}{m}}\liminf _{n \to \infty} \frac{1}{n} \log R_n(f;g(n,\varepsilon);x,\varepsilon),$$ then $$\lim_{m\to\infty}\varphi_m(x)=h_\mu(f)\text{ for } \mu\text{-almost every } x.$$ Choose $0<\tau<\frac{1}{3}\delta_\star$, then by Egoroff's theorem, there exists $A_1\subset X$ with $\mu(A_1)>1-\tau$ such that the sequence $\{\varphi_m\}_{m\geq1}$ converge uniformly to $h_\mu(f)$ on $A_1$. Choose $0<\delta<\frac{1}{2}h_\mu(f)$, then there exists $N_1\geq N$, such that for any $m\geq N_1$ and any $x\in A_1$, one has $|\varphi_m(x)-h_\mu(f)|<\delta$, hence, $$\varphi_m(x)=\inf_{0<\varepsilon\leq\frac{1}{m}}\lim_{n\to\infty}\inf_{q\geq n}\frac{1}{q}\log R_q(f;g(q,\varepsilon);x,\varepsilon)>h_\mu(f)-\delta.$$ Denote $\varepsilon_\star=\frac{1}{N_1}>0$, given $0<\tilde{\varepsilon}\leq\varepsilon_\star$, again, by Egoroff's theorem, there exists $A_2\subset A_1$ with $\mu(A_2)>1-2\tau$, such that the sequence $$\{\inf_{q\geq n}\frac{1}{q}\log R_q(f;g(q,\tilde{\varepsilon});x,\tilde{\varepsilon})\}_{n\geq1}$$ converge uniformly on $A_2$. Hence, there exists $N_2\geq N_1$, such that for any $x\in A_2$, one has $$\inf_{q\geq N_2}\frac{1}{q}\log R_q(f;g(q,\tilde{\varepsilon});x,\tilde{\varepsilon})>h_\mu(f)-2\delta>0.$$ Hence, there exists $N_3\geq N_2$, such that for any $x\in A_2$, we have $$\inf_{r\geq N_3}R_{r}(f;g(r,\tilde{\varepsilon});x,\tilde{\varepsilon})>6Lr.$$ Given $A\subset X$ with $\mu(A)>\delta_\star$, denote $A_3=A\cap A_2$, then $$\mu(A_3)=\mu(A)+\mu(A_2)-\mu(A\cup A_2)>\mu(A)-2\tau>0.$$
	By the regularity of $\mu$, there exists a compact subset $A_4\subset A_3$ such that $$\mu(A_4)>\mu(A_3)-\tau>\mu(A)-3\tau>0.$$
	Since $A_4$ is compact, for any $\varepsilon>0$, there exist $m$ points $\{x_i\}_{i=1}^m$ of $A_4$ such that $$A_4\subset\bigcup_{i=1}^mB(x_i,\frac{\varepsilon}{2}).$$ Hence, there exists $1\leq j\leq m$ with $\mu(A_4\cap B(x_j,\frac{\varepsilon}{2}))>0$. Denote $$F:=A_4\cap B(x_j,\frac{\varepsilon}{2}),$$ then $$F\subset A_4\subset A\cap A_2,~\mu(F)>0\text{ and }\operatorname{diam}(F)<\varepsilon.$$
	By using Lemma \ref{Lemma 0.2} for $F$ and $k=6LN_3$, there exists $n\geq 1$ such that $$\mu(F\cap f^{-n}(F)\cap f^{-2n}(F)\cap\cdots\cap f^{-6LN_3n}(F))>0.$$ Denote $l=N_3n$, then $$\mu(F\cap f^{-l}(F)\cap f^{-2l}(F)\cap\cdots\cap f^{-6Ll}(F))>0.$$
	As a result, $$\mu(\{x\in F:f^{il}x\in F\text{ and }R_{l}(f;g(l,\tilde{\varepsilon});f^{il}x,\tilde{\varepsilon})>6Ll\text{ for any }0\leq i\leq 6L\})>0.$$
	
\end{proof}

\section{Proof of Theorem \ref{Theorem B}}\label{Section 5}
In this section, we prove Theorem \ref{Theorem B}. First of all, we prove the following theorem.
\begin{theorem}\label{Theorem 4.1}
	Suppose that $(X,f)$ is a dynamical system and $L\in\mathbb{N}^+$, if there exists a subsystem $(Y,f^L)$ of $(X,f^L)$ such that
	\begin{enumerate}[(1)]
		\item $(Y,f^L)$ satisfies the shadowing property with $h_{\mathrm{top}}(f^L,Y)>0$;
		\item $X=\bigcup_{i=0}^{L-1}f^i(Y)$.
	\end{enumerate}
	Then $h_{\mathrm{top}}(f,X)=\frac{1}{L}h_{\mathrm{top}}(f^L,Y)>0$ and for any $0<\eta<h_{\mathrm{top}}(f,X)$ and any $N\in\mathbb{N}^+$, there exist $m,k\in\mathbb{N}^+$ with $$k\in\{Ln:n\in\mathbb{N}^+\}, k\geq N \text{ and } \frac{1}{k}\log m>\eta,$$ an alphabet $\mathcal{A}$ with $|\mathcal{A}|=m$ and a subsystem $(\Lambda,f^k)$ of $(Y,f^k)$ such that 
	\begin{enumerate}[(1)]
		\item $f^i(\Lambda)\cap f^j(\Lambda)=\emptyset$ for any $0\leq i<j\leq k-1$;
		\item there is a factor $\pi: (\Lambda,   f^k)\to (\mathcal{A}^{\mathbb{N}},  \sigma)$.
	\end{enumerate}
\end{theorem}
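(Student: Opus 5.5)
First, the entropy identity. Since $X=\bigcup_{i=0}^{L-1}f^i(Y)$, Lemma~\ref{Lemma 2.2} gives $h_{\mathrm{top}}(f^L,X)=\max_i h_{\mathrm{top}}(f^L,f^i(Y))=h_{\mathrm{top}}(f^L,Y)$, using that $f^i$ is a factor map from $(Y,f^L)$ onto $(f^i(Y),f^L)$ (so entropy cannot increase) together with the inclusion $f^{L-i}(f^i(Y))\subset Y$ and Lemma~\ref{Lemma 2.2}(1) (so it cannot decrease). Hence $h_{\mathrm{top}}(f,X)=\frac1L h_{\mathrm{top}}(f^L,Y)>0$. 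Write $g:=f^L$ on $Y$ and fix $\eta<h_{\mathrm{top}}(f,X)$. Choose $\mu\in\mathcal M^e_g(Y)$ with $h_\mu(g)>L\eta'$ for some $\eta<\eta'<h_{\mathrm{top}}(f,X)$, by the variational principle.

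Next, the marker. Apply Lemma~\ref{Lemma B} to $(Y,g)$ and $\mu$ with entropy level $L\eta'$. This gives $\varepsilon_*$ and, for small $\varepsilon$, a set $E$ with $\mu(E)>\delta_*$ and constant $K_*$. Then apply Lemma~\ref{Lemma C} to $(Y,g)$ with $A=E$, so that $\delta_\star=\delta_*$. This yields a marker region $F\subset E$ of diameter $<\varepsilon$, a length $l\ge N$ (large), and a point $x_0\in F$ whose $g^{il}$-iterates ($0\le i\le 6L$) lie in $F$ and have $R_l(g;g^{il}x_0,\varepsilon_\star)>6Ll$. Thus the orbit segment $w:=\langle x_0,\dots,g^{l}x_0\rangle$ is a marker block that does not $\varepsilon_\star$-recur to itself within $6Ll$ steps. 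Around a base point $x\in F\subset E$, Lemma~\ref{Lemma B} provides, for $n$ huge, a separated family $\Gamma_{n_*}\subset B(x,\varepsilon)\cap g^{-n_*}B(x,\varepsilon)$ of cardinality $>e^{n_*L\eta'}$. Form pseudo-orbits for $g$ by concatenating the marker $w$ (closed up near $x$) with one arbitrary block from $\Gamma_{n_*}$, repeated infinitely along any sequence in $\mathcal A^{\mathbb N}$, where $\mathcal A\cong\Gamma_{n_*}$. Let $p=l+n_*$ (adjusted so that the period is consistent), and set $k=Lp$. By shadowing, each sequence is traced. Let $\Lambda$ be the closure of the set of tracing points, intersected appropriately so that it is $g^{p}$-invariant; this is compact. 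Define $\pi$ by reading off which element of $\Gamma_{n_*}$ the orbit is $\varepsilon_*/3$-close to in each block. Separation of $\Gamma_{n_*}$ makes $\pi$ well defined and continuous, and it intertwines $f^k=g^p$ with $\sigma$. Surjectivity follows by compactness. The entropy condition $\frac1k\log m=\frac{\log|\Gamma_{n_*}|}{L(l+n_*)}>\eta$ holds once $n_*\gg l$.

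The main obstacle is disjointness, i.e.\ $f^i(\Lambda)\cap f^j(\Lambda)=\emptyset$ for $0<j-i<k$. For $j-i\not\equiv0\pmod L$, I would not rely on $Y$, since the $f^i(Y)$ may overlap. Instead I would work with shifts by multiples of $L$ only, together with the $g$-structure, and handle residues by the non-recurrence of the marker: a coincidence $f^i y=f^j y'$ forces a time shift $s=j-i$ with $f^s$ carrying a marker-tracing orbit onto one whose $g$-orbit traces the marker at a misaligned position. The condition $R_l>6Ll$, which is applied to all $6L$ iterates, and the placement of the $L$ possible residues within a window of length $\le 6Ll$ should rule this out for shifts that are multiples of $L$. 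For the other residues, I would repeat the marker $6L$ times, so that any residue class $r$ can be handled by comparing $f^r$ of marker points with the marker points themselves: the same return-time bound on the $f$-orbit of $\mu$-typical points (pulled back through $f^r$) gives the contradiction. I expect that controlling this residue case is where careful constant choices (with $\varepsilon\ll\varepsilon_\star$ and the shadowing $\delta$) are needed. If this step fails, the fallback is to mimic the pruning of Section~\ref{Section 3}, cutting $\Lambda$ by overlaps while retaining a full-shift factor on a large alphabet.
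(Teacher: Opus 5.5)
Your entropy identity and the overall architecture match the paper: a marker from Lemma~\ref{Lemma C}, separated blocks from Lemma~\ref{Lemma B}, shadowing, and coding by separation. But there is a genuine gap exactly at the step you flag. Applying Lemma~\ref{Lemma C} to $(Y,f^L,\mu)$ gives non-recurrence only of the $f^L$-orbit of the marker, so it says nothing about shifts $r\not\equiv 0\pmod L$. Your fix appeals to ``the same return-time bound on the $f$-orbit of $\mu$-typical points'', but no such bound exists in your setup: $\mu$ is only $f^L$-invariant, and Lemma~\ref{Lemma C} is never applied to $f$. The missing idea is to pass to the $f$-ergodic measure $\nu:=\frac1L\sum_{i=0}^{L-1}\mu\circ f^{-i}$, which satisfies $h_\nu(f)=\frac1L h_\mu(f^L)>0$ and $\nu(E)\ge\frac1L\mu(E)$. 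Its components $\mu\circ f^{-r}$ are exactly what you would be ``pulling back through $f^r$''. The paper fixes $\delta_\star=\frac1{2L}$ in advance, so that $\varepsilon_\star$ and $\varepsilon_*$ are known before it chooses the tracing scale $\varepsilon\le\frac16\min\{\varepsilon_\star,\varepsilon_*\}$, the shadowing constant $\delta$, and then $E$ (Lemma~\ref{Lemma B} with radius $\frac12\delta$ and $\delta_*=\frac12$). Your order, Lemma~\ref{Lemma B} before Lemma~\ref{Lemma C}, does not let you enforce $\varepsilon\ll\varepsilon_\star$. The paper then applies Lemma~\ref{Lemma C} to $(X,f,\nu)$ with $A=E$ and obtains $x_0\in F\subset E\subset S_\mu\subset Y$ with $R_l(f;x_0,\varepsilon_\star)>6Ll$. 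This $f$-level non-recurrence rules out every marker-against-marker overlap, whatever the residue of the shift mod $L$. Your fallback does not work either. The pruning of Section~\ref{Section 3} ends with a semi-horseshoe onto $\{0,1\}^{\mathbb N}$ of uncontrolled order (via Lemma~\ref{new-lemma3.3}), so the bound $\frac1k\log m>\eta$ is lost.

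The second gap is in your block geometry, and it affects shifts that are multiples of $L$ as well. You use a short marker and a single separated block of length $n_*\gg l$ per period. Then a shift can put each orbit's marker entirely inside the other orbit's separated block. Neither the marker's non-recurrence nor the separation of $\Gamma_{n_*}$ excludes this. Nor can you cheaply discard the elements of $\Gamma_{n_*}$ that pass near the marker somewhere in their long window, since separation over $n_*$ steps gives no bound on how many elements share a given window of length $l$. The paper reverses the proportions. It applies Lemma~\ref{Lemma B} with $n=l$, so each separated block has $f$-length $Ln_*\le Ll$, while the marker $x_0,fx_0,\dots,f^{6Ll-1}x_0$ has length $6Ll$. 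For a shift $(6L-1)l<r<k-(6L-1)l$, a whole block of one orbit then lies inside the other orbit's marker. Each window $f^jx_0,\dots,f^{j+Ln_*-1}x_0$ is $\frac12\varepsilon_*$-shadowed by at most one element of the $(f^L;n_*,\varepsilon_*)$-separated set. So deleting at most $6Ll$ elements (the set $G$) removes this case at negligible entropy cost. The remaining shifts overlap marker with marker for $l$ steps and contradict $R_l(f;x_0,\varepsilon_\star)>6Ll$. Entropy is recovered not by lengthening one block but by concatenating $p\to\infty$ short blocks after each marker, with $\mathcal A=\Gamma'_{n_*}(f^L)^p$ and $k=L(6l+pn_*)$.
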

\begin{proof}
	By Lemma \ref{Lemma 2.2}, we have that $h_{\mathrm{top}}(f,X)=\frac{1}{L}h_{\mathrm{top}}(f^L,Y)>0$. Given $0<\eta<h_{\mathrm{top}}(f,X)$, we choose $\tau>0$ such that $$\eta L+4\tau<h_{\mathrm{top}}(f^L,Y).$$ By the variational principle, there exists $\mu\in\mathcal{M}_{f^L}^e(Y)$ such that $$h_\mu(f^L)>\eta L+3\tau.$$ 
	
	Since $(Y,f^L)$ satisfying the shadowing property, by Lemma \ref{Lemma B}, there exists $\varepsilon_*>0$ such that for any $0<\varepsilon_1\leq\varepsilon_*$ and any $0<\delta_*<1$, there exist $K_*,N_*\in\mathbb{N}$ with $N_*>K_*$ and $E\subset S_\mu$ with $\mu(E)>\delta_*$, such that for any $n\geq N_*$ and any $x\in E$, there exist $n-K_*\leq n_*\leq n$ and a $(f^L;n_*,\varepsilon_*)$-separated set $\Gamma_{n_*}(f^L)$ satisfying 
	\begin{equation*}
		\Gamma_{n_*}(f^L)\subset B(x,\varepsilon_1)\cap f^{-Ln_*}(B(x,\varepsilon_1))\cap Y
	\end{equation*}
	and
	\begin{equation*}
		\frac{\log|\Gamma_{n_*}(f^L)|}{n_*}>\eta L+3\tau.
	\end{equation*}

	We define $$\nu:=\frac{1}{L}\sum_{i=0}^{L-1}\mu\circ f^{-i},$$ then for any measurable set $A\subset X$ with $f^{-1}A=A$, we have $$f^{-L}(A)=A\text{ and }\nu(A)=\frac{1}{L}\sum_{i=0}^{L-1}\mu\circ f^{-i}(A)=\mu(A).$$ Hence, $\mu(A)=0$ or $1$ implies $\nu(A)=0$ or $1$. Since $\mu\in\mathcal{M}_{f^L}^e(X)$, we have that $\nu\in\mathcal{M}_f^e(X)$. By \cite[Lemma 18.5]{DGS}, we have $h_\nu(f^L)=h_\mu(f^L)>0$, hence, $$h_\nu(f)=\frac{1}{L}h_\nu(f^L)>0.$$
	
	We set $\delta_\star=\frac{1}{2L}$. By Lemma \ref{Lemma C}, there exists $\varepsilon_\star>0$ such that for any $A\subset X$ with $\nu(A)>\delta_\star$, any $N_1\in\mathbb{N}^+$ and any $\varepsilon_2>0$, there exist $l\geq N_1$, $F\subset A$ with $\nu(F)>0$ and $\operatorname{diam}(F)<\varepsilon_2$, such that
	\begin{equation}\label{equ (4.1)}
	\nu(\{x\in F:f^{il}x\in F\text{ and }R_{l}(f;f^{il}x,\varepsilon_\star)>6Ll\text{ for any }0\leq i\leq 6L\})>0.
	\end{equation}
	
	We fix $0<\varepsilon\leq\min\{\frac{1}{6}\varepsilon_\star,\frac{1}{6}\varepsilon_*\}$ such that for any $x,y\in X$ with $d(x,y)<\varepsilon$, we always have 
	$$d_L(f;x,y)<\min\{\frac{1}{6}\varepsilon_\star,\frac{1}{6}\varepsilon_*\}.$$ By the shadowing property for $(Y,f^L)$, there exists $0<\delta<\varepsilon$ such that every $\delta$-pseudo orbit of $f^L$ in $Y$ is $\varepsilon$-traced by some point in $Y$ for $f^L$. 
	
	We set $\varepsilon_1=\frac{1}{2}\delta$ and $\delta_*=\frac{1}{2}$, get $K_*,N_*\in\mathbb{N}^+$ with $N_*>K_*$ and $E\subset S_\mu$ with $\mu(E)>\delta_*$. 
	Now, we choose $N_1\in\mathbb{N}^+$ large enough such that for any $n\geq N_1-K_*$, we have   $$n\geq\max\{N,N_*\}$$ and 
	\begin{equation}\label{equ (4.2)}
		e^{n(\eta L+3\tau)}>e^{n(\eta L+2\tau)}+6L(n+K_*).
	\end{equation}
	From the definition of $\nu$, we have $$\nu(E)\geq\frac{1}{L}\mu(E)>\frac{1}{L}\delta_*=\frac{1}{2L}=\delta_\star.$$ 
	We set $\varepsilon_2=\frac{1}{2}\delta$, get $l\geq N_1$, $F\subset E$ with $\nu(F)>0$ and $\operatorname{diam}(F)<\frac{1}{2}\delta$ satisfying equation (\ref{equ (4.1)}). In particular, we can choose $x_0\in F$ such that 
	\begin{equation}\label{equ (4.3)}
		d(x_0,f^{il}x_0)<\frac{1}{2}\delta\text{ and }R_{l}(f;f^{il}x_0,\varepsilon_\star)>6Ll\text{ for any }0\leq i\leq 6L.
	\end{equation}
	
	\textbf{Claim 1.} For any $1\leq j\leq (6L-1)l$, we always have $$d(f^ix_0,f^{j+i}x_0)\geq\varepsilon_\star\text{ for some }0\leq i\leq l-1.$$
	\begin{proof}[Proof of Claim 1]
		On the contrary, we assume that $d(f^ix_0,f^{j+i}x_0)<\varepsilon_\star$ for any $0\leq i\leq l-1$. As a result, we have that $f^jx_0\in B_{l}(f;x_0,\varepsilon_\star)$. Hence, $R_l(f;x_0,\varepsilon_\star)\leq (6L-1)l$. However, by equation (\ref{equ (4.3)}), we have $R_l(f;x_0,\varepsilon_\star)> 6Ll$, it is a contradiction. As a result, we have $$d(f^ix_0,f^{j+i}x_0)\geq\varepsilon_\star\text{ for some }0\leq i\leq l-1.$$
	\end{proof}
	
	Now, for $n=l$, $x=x_0\in F\subset E$, we get $$l-K_*\leq n_*\leq l$$ and a $(f^L;n_*,\varepsilon_*)$-separated set $\Gamma_{n_*}(f^L)$ satisfying the following:
	\begin{equation}\label{equ (4.4)}
		\Gamma_{n_*}(f^L)\subset B(x_0,\frac{1}{2}\delta)\cap f^{-Ln_*}(B(x_0,\frac{1}{2}\delta))\cap Y;
	\end{equation}
	
	\begin{equation}\label{equ (4.5)}
		\frac{\log|\Gamma_{n_*}(f^L)|}{n_*}>\eta L+3\tau.
	\end{equation}
	Since $n_*\geq l-K_*\geq N_1-K_*$ and $l\leq n_*+K_*$, by equation (\ref{equ (4.2)}), we have 
	\begin{equation*}
		e^{n_*(\eta L+3\tau)}>e^{n_*(\eta L+2\tau)}+6L(n_*+K_*)\geq e^{n_*(\eta L+2\tau)}+6Ll.
	\end{equation*}
	Combining with equation (\ref{equ (4.5)}), we have that
	\begin{equation}\label{equ (4.6)}
		\frac{1}{n_*}\log(|\Gamma_{n_*}(f^L)|-6Ll)>\eta L+2\tau.
	\end{equation}
	
	Denote
	\begin{multline*}
		G:=\{x\in\Gamma_{n_*}(f^L): d(f^ix,f^{j+i}x_0)\leq\frac{1}{2}\varepsilon_*\text{ for some }\\ 0\leq j\leq 6Ll-Ln_*\text{ and any }0\leq i\leq Ln_*-1\}.
	\end{multline*} 
	Since $\Gamma_{n_*}(f^L)$ is $(f^L;n_*,\varepsilon_*)$-separated, we have $$|G|\leq 6Ll.$$
	Denote $$\Gamma'_{n_*}(f^L):=\Gamma_{n_*}(f^L)\setminus G,$$ then combining with equation (\ref{equ (4.6)}), we have 
	\begin{equation}\label{equ (4.7)}
		\frac{1}{n_*}\log(|\Gamma'_{n_*}(f^L)|)>\eta L+2\tau.
	\end{equation}
	And for any $x\in \Gamma'_{n_*}(f^L)$ and any $0\leq j\leq 6Ll-Ln_*$, we always have that
	\begin{equation}\label{equ (4.8)}
		d(f^ix,f^{j+i}x_0)>\frac{1}{2}\varepsilon_*\text{ for some }0\leq i\leq Ln_*-1.
	\end{equation}
	
	We choose $p\in\mathbb{N}^+$ large enough such that 
	\begin{equation}\label{equ (4.9)}
		\frac{pn_*(\eta L+2\tau)}{6l+pn_*}>\eta L+\tau
	\end{equation} and denote $$k=6Ll+pLn_*=L(6l+pn_*)\geq N_1\geq N\text{ and }m=|\Gamma'_{n_*}(f^L)|^p.$$ Then combining equation (\ref{equ (4.7)}) and equation (\ref{equ (4.9)}), we have that $$\frac{1}{k}\log m>\eta+\frac{1}{L}\tau>\eta.$$
	
	Now we define an alphabet $$\mathcal{A}:=\Gamma'_{n_*}(f^L)^p=\Gamma'_{n_*}(f^L)\times\Gamma'_{n_*}(f^L)\times\cdots\times\Gamma'_{n_*}(f^L)\text{ with }|\mathcal{A}|=m.$$ For a element $\theta=(\theta^1,\theta^2,\cdots,\theta^p)\in\mathcal{A}$, we define $$\widetilde{\mathcal{L}}_{\theta}(f^L):=\mathcal{L}_{x_0}(f^L)\mathcal{L}'_{\theta^1}(f^L)\mathcal{L}'_{\theta^2}(f^L)\cdots\mathcal{L}'_{\theta^p}(f^L),$$where $$\mathcal{L}_{x_0}(f^L):=<x_0,f^Lx_0,\cdots,f^{L(6l-1)}x_0>$$ and $$\mathcal{L}'_{\theta^i}(f^L):=<\theta^i,f^L\theta^i,\cdots,f^{L(n_*-1)}\theta^i>\text{ for any }1\leq i\leq p.$$
	
	$$\widetilde{\mathcal{L}}_{\theta}(f):=\mathcal{L}_{x_0}(f)\mathcal{L}'_{\theta^1}(f)\mathcal{L}'_{\theta^2}(f)\cdots\mathcal{L}'_{\theta^p}(f),$$where $$\mathcal{L}_{x_0}(f):=<x_0,fx_0,\cdots,f^{6Ll-1}x_0>$$ and $$\mathcal{L}'_{\theta^i}(f):=<\theta^i,f\theta^i,\cdots,f^{Ln_*-1}\theta^i>\text{ for any }1\leq i\leq p.$$
	Then the length of $\widetilde{\mathcal{L}}_{\theta}(f)$ is $k$.
	
	Given $\underline{\theta}=\theta_0\theta_1\cdots\in\mathcal{A}^{\mathbb{N}}$, we define $$\mathcal{L}_{\underline{\theta}}(f^L):=\widetilde{\mathcal{L}}_{\theta_0}(f^L)\widetilde{\mathcal{L}}_{\theta_1}(f^L)\cdots\text{ and }\mathcal{L}_{\underline{\theta}}(f):=\widetilde{\mathcal{L}}_{\theta_0}(f)\widetilde{\mathcal{L}}_{\theta_1}(f)\cdots.$$
	By equation (\ref{equ (4.3)}) and equation (\ref{equ (4.4)}), it can be checked that $\mathcal{L}_{\underline{\theta}}(f^L)$ is a $\delta$-pseudo-orbit of $f^L$ in $Y$. Hence, it can be $\varepsilon$-traced by some point in $Y$ for $f^L$. Recall that for any $x,y\in X$ with $d(x,y)<\varepsilon$, we always have 
	$$d_L(f;x,y)<\hat{\varepsilon}:=\min\{\frac{1}{6}\varepsilon_\star,\frac{1}{6}\varepsilon_*\}.$$ Hence, if we rewrite $\mathcal{L}_{\underline{\theta}}(f)$ as $\mathcal{L}_{\underline{\theta}}(f)=\omega_0\omega_1\cdots$, then the set $$Z_{\underline{\theta}}:=\{z\in Y:d(f^iz,\omega_i)\leq\hat{\varepsilon}\text{ for any }i\in\mathbb{N}\}$$ is nonempty and closed. We define $$\Lambda:=\bigcup_{\underline{\theta}\in\mathcal{A}^\mathbb{N}}Z_{\underline{\theta}},$$ then we have $f^k(\Lambda)\subset \Lambda\subset Y$.
	
	\textbf{Claim 2.} We claim that $Z_{\underline{\theta}}\cap Z_{\underline{\hat{\theta}}}=\emptyset$ for any $\underline{\theta}\neq\underline{\hat{\theta}}\in\mathcal{A}^\mathbb{N}$.
	\begin{proof}[Proof of Claim 2]
		Since $\underline{\theta}\neq\underline{\hat{\theta}}$, there exist $i\in\mathbb{N}$ and $1\leq j\leq p$ such that $\theta_i^{j}\neq\hat{\theta}_i^j\in\Gamma'_{n_*}(f^L).$ Suppose that $\omega_r=\theta_i^{j}$ and $\hat{\omega}_r=\hat{\theta}_i^j$ for some $0\leq r\leq (i+1)k-Ln_*$. Since $\omega_r$ and $\hat{\omega}_r$ are $(f^L;n_*,\varepsilon_*)$-separated, for any $z\in Z_{\underline{\theta}}$ and $\hat{z}\in Z_{\underline{\hat{\theta}}}$, we have $$d(f^{r+t}z,f^{r+t}\hat{z})>\varepsilon_*-2\hat{\varepsilon}\geq\frac{2}{3}\varepsilon_*>0\text{ for some }0\leq t\leq Ln_*-1.$$ Hence, $z\neq\hat{z}$, as a result, $Z_{\underline{\theta}}\cap Z_{\underline{\hat{\theta}}}=\emptyset$.
	\end{proof}

	\textbf{Claim 3.} We claim that $\Lambda$ is closed and hence $(\Lambda,f^k)$ is a subsystem of $(X,f^k)$.
	\begin{proof}[Proof of Claim 3]
		Since $f^k(\Lambda)\subset \Lambda$, it is enough to show that $\Lambda$ is closed. 	Given $\underline{\theta}=\theta_0\theta_1\cdots\in\mathcal{A}^{\mathbb{N}}$ and $r\in\mathbb{N}$, we define $$\mathcal{L}_{\underline{\theta}}(f,r):=\widetilde{\mathcal{L}}_{\theta_0}(f)\widetilde{\mathcal{L}}_{\theta_1}(f)\cdots\widetilde{\mathcal{L}}_{\theta_r}(f)=\omega_0\omega_1\cdots\omega_{(r+1)k-1}$$ and $$Z_{\underline{\theta},r}:=\{z\in Y:d(f^iz,\omega_i)\leq\hat{\varepsilon}\text{ for any }0\leq i\leq (r+1)k-1\}.$$ Then $Z_{\underline{\theta},r}$ is nonempty, closed and for any $\underline{\theta},\underline{\hat{\theta}}\in\mathcal{A}^\mathbb{N}$ with $\theta_0\theta_1\cdots\theta_r=\hat{\theta}_0\hat{\theta}_1\cdots\hat{\theta}_r$, we always have that $Z_{\underline{\theta},r}=Z_{\underline{\hat{\theta}},r}$. As a result, $$\Lambda=\bigcup_{\underline{\theta}\in\mathcal{A}^\mathbb{N}}Z_{\underline{\theta}}=\bigcap_{r\geq 0}\bigcup_{\underline{\theta}\in\mathcal{A}^\mathbb{N}}Z_{\underline{\theta},r}$$ is closed.
	\end{proof}
	
	Now, we define a map $\pi: (\Lambda,   f^k)\to (\mathcal{A}^{\mathbb{N}},  \sigma)$ as $\pi(z):=\underline{\theta}$ for any $z\in Z_{\underline{\theta}}$. Then $\pi\circ f^k=\sigma\circ\pi$. By Claim 2, $\pi$ is well-defined. Since $Z_{\underline{\theta}}\neq\emptyset$ for any $\underline{\theta}\in\mathcal{A}^\mathbb{N}$, we conclude that $\pi$ is surjective. 
	
	\textbf{Claim 4.} We claim that $\pi$ is continuous.
	\begin{proof}[Proof of Claim 4]
		It is enough to show that for any $r\in\mathbb{N}$, any $z\in Z_{\underline{\theta}}$ and $\hat{z}\in Z_{\underline{\hat{\theta}}}$ with $d(f^iz,f^i\hat{z})<\frac{2}{3}\varepsilon_*$ for any $0\leq i\leq (r+1)k-1$, we always have that $\theta_0\theta_1\cdots\theta_r=\hat{\theta}_0\hat{\theta}_1\cdots\hat{\theta}_r$. We assume that there exist $0\leq s\leq r$ and $1\leq j\leq p$ such that $\theta_s^j\neq \hat{\theta}_s^j\in\Gamma'_{n_*}(f^L)$, then follow the same line of the proof of Claim 2, we conclude that $$d(f^tz,f^t\hat{z})>\frac{2}{3}\varepsilon_*\text{ for some }0\leq t\leq (r+1)k-1.$$ It is a contradiction to $d(f^iz,f^i\hat{z})<\frac{2}{3}\varepsilon_*$ for any $0\leq i\leq (r+1)k-1$.
	\end{proof}
	As a result, $\pi$ is factor. Finally, we show that $f^i(\Lambda)\cap f^j(\Lambda)=\emptyset$ for any $0\leq i<j\leq k-1$. When $k=1$, it is clear, hence, in the following, we assume that $k\geq2$. 
	
	\textbf{Claim 5.} We claim that it is enough to show that $\Lambda\cap f^r(\Lambda)=\emptyset$ for any $1\leq r\leq k-1$.
	\begin{proof}[Proof of Claim 5.]
		If $f^i(\Lambda)\cap f^j(\Lambda)\neq\emptyset$ for some $0\leq i<j\leq k-1$, denote $r=j-i$, then $$\emptyset\neq f^{k-i}(f^{i}(\Lambda)\cap f^{j}(\Lambda))\subset f^k(\Lambda)\cap f^{j+k-i}(\Lambda)\subset\Lambda\cap f^{j-i}(\Lambda)=\Lambda\cap f^r(\Lambda).$$ 
	\end{proof}
	Suppose that $\Lambda\cap f^{r}(\Lambda)\neq\emptyset$ for some $1\leq r\leq k-1$. Then for any $z\in\Lambda\cap f^{r}(\Lambda)$, there exist $\underline{\theta},\underline{\hat{\theta}}\in\mathcal{A}^\mathbb{N}$ such that $z\in Z_{\underline{\theta}}\cap f^{r}(Z_{\underline{\hat{\theta}}})$. Hence, we have that $$d(f^iz,\omega_i)\leq\hat{\varepsilon}\text{ and }d(f^iz,\hat{\omega}_{r+i})\leq\hat{\varepsilon},\text{ for any }i\in\mathbb{N}.$$ 
	As a result,
	\begin{equation}\label{equ (4.10)}
		d(\omega_i,\hat{\omega}_{r+i})\leq2\hat{\varepsilon}=\min\{\frac{1}{3}\varepsilon_\star,\frac{1}{3}\varepsilon_*\},\text{ for any }i\in\mathbb{N}.
	\end{equation} 
	Recall that $n_*\leq l$,
	\begin{equation}\label{equ (4.11)}
		\omega_{kj+i}=\hat{\omega}_{kj+i}=f^ix_0\text{ for any }j\in\mathbb{N}\text{ and any }0\leq i\leq 6Ll-1,
	\end{equation}
	\begin{equation}\label{equ (4.12)}
		\omega_{6Ll+tLn_*},\hat{\omega}_{6Ll+tLn_*}\in\Gamma'_{n_*}(f^L)\text{ for any }0\leq t\leq p-1.
	\end{equation}
	
	If $1\leq r\leq (6L-1)l$, then $\hat{\omega}_{r+i}=f^{r+i}x_0$ for any $0\leq i\leq l-1$. By equation (\ref{equ (4.10)}), we have $$d(f^ix_0,f^{r+i}x_0)\leq\frac{1}{3}\varepsilon_\star\text{ for any }0\leq i\leq l-1.$$ It is a contradiction to Claim 1.
	
	If $(6L-1)l< r< k-(6L-1)l$, then by the fact that $Ln_*\leq Ll$, equation (\ref{equ (4.10)}) and equation (\ref{equ (4.12)}), there always exist $0\leq t\leq p-1$ such that $\hat{\omega}_{6Ll+tLn_*}\in\Gamma'_{n_*}(f^L)$ and $0\leq j\leq 4Ll$ such that $$d(\omega_{j+i},\hat{\omega}_{6Ll+tLn_*+i})\leq\frac{1}{3}\varepsilon_*\text{ for any }0\leq i\leq Ln_*-1,$$ which means that $$d(f^{j+i}x_0,f^{i}\hat{\omega}_{6Ll+tLn_*})\leq\frac{1}{3}\varepsilon_*\text{ for any }0\leq i\leq Ln_*-1.$$ It is a contradiction to equation (\ref{equ (4.8)}).
	
	If $k-(6L-1)l\leq r\leq k-1$, then $1\leq k-r\leq(6L-1)l$. By equation (\ref{equ (4.10)}), we have $$d(\omega_{k-r+i},\hat{\omega}_{k+i})\leq\frac{1}{3}\varepsilon_\star\text{ for any }0\leq i\leq l-1.$$ By equation (\ref{equ (4.11)}), we have that $$d(f^{k-r+i}x_0,f^ix_0)\leq\frac{1}{3}\varepsilon_\star\text{ for any }0\leq i\leq l-1.$$ It is a contradiction to Claim 1.
	
	In conclusion, we have that $\Lambda\cap f^r(\Lambda)=\emptyset$ for any $1\leq r\leq k-1$.
	
	Now, we have finished the proof of Theorem \ref{Theorem 4.1}. 
\end{proof}
    Let us recall a result obtained by Fan et al. in the proof of \cite[Theorem 3.3]{Fan-Fan-Ryzhikov-Shen2022}. 
    \begin{lemma}\cite{Fan-Fan-Ryzhikov-Shen2022}\label{Lemma 4.2}
    	Suppose that $(X,f)$ is a dynamical system, $k\in\mathbb{N}^+$ and $(\Lambda,f^k)$ is a subsystem of $(X,f^k)$ such that $f^i(\Lambda)\cap f^j(\Lambda)=\emptyset$ for any $0\leq i<j\leq k-1$. Then for any non-trivial weight $\vartheta\in \ell^\infty(\mathbb{N})$, there exists $0\leq\tau\leq k-1$, such that $$\varpi:=(\varpi_n)_{n\geq 0},\text{ where }\varpi_0=0\text{ and }\varpi_{n}=\vartheta_{k(n-1)+\tau}\text{ for any }n\geq 1,$$ is a non-trivial weight in $\ell^\infty(\mathbb{N})$ and  $$f^{k-\tau}(\mathcal{N}_\varpi(f^k,\Lambda))\subset\mathcal{N}_\vartheta(f,X).$$
    \end{lemma}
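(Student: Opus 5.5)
The plan is to redo the argument from \cite[Theorem 3.3]{Fan-Fan-Ryzhikov-Shen2022}, using only that the sets $\Lambda_s:=f^s(\Lambda)$, $0\le s\le k-1$, are compact and pairwise disjoint. Note that $f^k(\Lambda_s)\subset\Lambda_s$.

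\textbf{Choice of the residue $\tau$.} Since
\[
\frac1N\sum_{n<N}|\vartheta_n|\le\sum_{\tau=0}^{k-1}\frac1N\sum_{\substack{n<N\\ n\equiv\tau \ (\mathrm{mod}\ k)}}|\vartheta_n|,
\]
the $\limsup$ of the left side is positive, so at least one residue class carries positive upper density. Choose $\tau\in\{0,\dots,k-1\}$ with
\[
\limsup_{N\to\infty}\frac1N\sum_{m<N}|\vartheta_{km+\tau}|>0.
\]
Passing from block length $kN$ to $N$ only rescales by $k$, so this choice is legitimate. Then $\varpi$, which is the sequence $(\vartheta_{km+\tau})_m$ shifted by one index with a $0$ prepended, is a bounded non-trivial weight.

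\textbf{Construction of the test function.} Take $x\in\mathcal N_\varpi(f^k,\Lambda)$ and $\psi\in C(\Lambda)$ with
\[
\limsup_{M}\frac1M\left|\sum_{n<M}\varpi_n\psi(f^{kn}x)\right|>0.
\]
Set $y:=f^{k-\tau}x$, so that $f^{km+\tau}y=f^{k(m+1)}x\in\Lambda$. Define $\Phi$ on the compact set $K:=\bigcup_s\Lambda_s$ by putting $\Phi=\psi$ on $\Lambda=\Lambda_0$ and $\Phi=0$ on $\Lambda_s$ for $s\ne0$. Because the pieces are disjoint and compact, $\Phi$ is continuous on $K$, and by Tietze it extends to some $\Phi\in C(X)$.

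For $n=km+\tau$ we have $f^ny\in\Lambda$, so $\Phi(f^ny)=\psi(f^{k(m+1)}x)$. For every other $n\ge0$ we have $n\equiv\tau+s$ with $s\neq0$ mod $k$. The key point is that $f^ny=f^{n+k-\tau}x$, and the exponent $n+k-\tau$ is not a multiple of $k$. Hence $f^ny\in\Lambda_{s}$ for some $s\ne0$ and $\Phi(f^ny)=0$. Here we use $f^{qk+s}(\Lambda)\subset f^s(\Lambda)$ for $0\le s<k$.

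\textbf{Comparing the sums.} The two sums satisfy
\[
\sum_{n<kM+\tau}\vartheta_n\Phi(f^ny)=\sum_{m<M}\vartheta_{km+\tau}\psi(f^{k(m+1)}x)=\sum_{n=1}^{M}\varpi_n\psi(f^{kn}x),
\]
and the last expression equals $\sum_{n<M+1}\varpi_n\psi(f^{kn}x)$ because $\varpi_0=0$. Dividing by $kM+\tau\approx kM$ gives a positive $\limsup$ along the subsequence $N=kM+\tau$. This already shows $y\in\mathcal N_\vartheta(f,X)$, since the $\limsup$ over all $N$ dominates the one along the subsequence.

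The main subtlety is the disjointness bookkeeping: one must verify that the off-residue iterates of $y$ really avoid $\Lambda_0$, which needs $f^k\Lambda\subset\Lambda$ together with the pairwise disjointness. The shift-by-one convention $\varpi_0=0$ is exactly what matches $y=f^{k-\tau}x$.
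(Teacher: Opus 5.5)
Your proposal is correct and follows essentially the same route as the paper's sketch: choose a residue class $\tau$ of positive upper density, extend $\psi$ by $0$ on the pairwise disjoint compact pieces $f^s(\Lambda)$, $s\neq 0$, via the Tietze extension theorem, and compare the weighted sums along $y=f^{k-\tau}x$. You also write out the details the paper leaves as ``it can be checked'': that every off-residue iterate $f^ny=f^{n+k-\tau}x$ lies in some $f^s(\Lambda)$ with $s\neq 0$, and the normalization along $N=kM+\tau$, which is equivalent to the paper's $kN$.
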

\begin{proof}[Sketch of proof]
	For any non-trivial weight $\vartheta\in \ell^\infty(\mathbb{N})$, we can always find $0\leq \tau\leq k-1$ such that $$\limsup_{N\to\infty}\frac{1}{N}\sum_{n=0}^{N-1}|\vartheta_{kn+\tau}|>0.$$  For $x\in\mathcal{N}_\varpi(f^k,\Lambda)$, there exists $\varphi\in C(\Lambda)$ with $$\limsup_{N\to\infty}\frac{1}{N}\left\vert\sum_{n=1}^{N-1}\vartheta_{k(n-1)+\tau}\varphi(f^{k(n-1)+\tau}(f^{k-\tau}x))\right\vert>0.$$ Denote $Y:=\bigcup_{i=0}^{k-1}f^i(\Lambda)$, since $f^i(\Lambda)\cap f^j(\Lambda)=\emptyset$ for any $0\leq i<j\leq k-1$, we can define $\tilde{\varphi}\in C(Y)$ as $\tilde{\varphi}|_\Lambda=\varphi$ and $\tilde{\varphi}(\bigcup_{i=1}^{k-1}f^i(\Lambda))=\{0\}.$ By Tietze extension theorem, we can find $\varphi^*\in C(X)$ with $\varphi^*|_{Y}=\tilde{\varphi}$. For $y=f^{k-\tau}x$, it can be checked that $$\limsup_{N\to\infty}\frac{1}{kN}\left\vert\sum_{n=0}^{kN-1}\vartheta_n\varphi^*(f^ny)\right\vert>0.$$
\end{proof}
\begin{lemma}\label{Lemma 4.3}
		Suppose that $\pi:(X,f)\to(Y,g)$ is a factor between two dynamical systems. Then for any non-trivial weight $\vartheta\in \ell^\infty(\mathbb{N})$, we have $$\pi^{-1}(\mathcal{N}_\vartheta(g,Y))\subset\mathcal{N}_\vartheta(f,X).$$ As a result, $$h_{\mathrm{top}}(f,\mathcal{N}_\vartheta(f,X))\geq h_{\mathrm{top}}(g,\mathcal{N}_\vartheta(g,Y)).$$
\end{lemma}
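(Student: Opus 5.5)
The first inclusion is a pullback of test functions along $\pi$. Fix a non-trivial weight $\vartheta$ and a point $x\in X$ with $\pi(x)\in\mathcal{N}_\vartheta(g,Y)$. By definition there is $\psi\in C(Y)$ with
\[
\limsup_{N\to\infty}\frac1N\Bigl|\sum_{n=0}^{N-1}\vartheta_n\psi(g^n\pi(x))\Bigr|>0.
\]
Set $\varphi:=\psi\circ\pi\in C(X)$; it is continuous because $\pi$ is. Iterating the factor relation $\pi\circ f=g\circ\pi$ gives $\pi\circ f^n=g^n\circ\pi$ for every $n\in\mathbb{N}$. Hence $\varphi(f^nx)=\psi(g^n\pi(x))$ for all $n$, so the weighted Birkhoff sums of $\varphi$ along the $f$-orbit of $x$ coincide term by term with those of $\psi$ along the $g$-orbit of $\pi(x)$. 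The $\limsup$ for $\varphi$ at $x$ is therefore positive, i.e.\ $x\in\mathcal{N}_\vartheta(f,X)$. This proves $\pi^{-1}(\mathcal{N}_\vartheta(g,Y))\subset\mathcal{N}_\vartheta(f,X)$.

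For the entropy inequality, let $E:=\pi^{-1}(\mathcal{N}_\vartheta(g,Y))$. Since $\pi$ is surjective, $\pi(E)=\mathcal{N}_\vartheta(g,Y)$. Lemma \ref{Lemma 2.3} applied to $E$ then gives
\[
h_{\mathrm{top}}(f,E)\ge h_{\mathrm{top}}(g,\pi(E))=h_{\mathrm{top}}(g,\mathcal{N}_\vartheta(g,Y)).
\]
Bowen entropy is monotone under inclusion of subsets, which follows directly from Definition \ref{Definition 2.4} (or from Lemma \ref{Lemma 2.2}(2) applied to $\mathcal{N}_\vartheta(f,X)=E\cup(\mathcal{N}_\vartheta(f,X)\setminus E)$). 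Combined with the first part, this yields $h_{\mathrm{top}}(f,\mathcal{N}_\vartheta(f,X))\ge h_{\mathrm{top}}(f,E)$, and the claimed inequality follows.

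Nothing here is a real obstacle. The only points to state explicitly are the use of surjectivity of $\pi$ to identify $\pi(E)$ with $\mathcal{N}_\vartheta(g,Y)$, and monotonicity of Bowen entropy.
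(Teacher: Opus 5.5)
Your proposal is correct and follows essentially the same route as the paper. You pull back the test function via $\varphi=\psi\circ\pi$ to obtain $\pi^{-1}(\mathcal{N}_\vartheta(g,Y))\subset\mathcal{N}_\vartheta(f,X)$, then apply Lemma \ref{Lemma 2.3} to $\pi^{-1}(\mathcal{N}_\vartheta(g,Y))$ using surjectivity of $\pi$ and monotonicity of Bowen entropy, and you state these last two points more explicitly than the paper does.
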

\begin{proof}
	For any $\varphi\in C(Y)$, we have $\hat{\varphi}:=\varphi\circ\pi\in C(X)$ and $$\hat{\varphi}(f^nx)=\varphi(g^n(\pi x))\text{ for any }x\in X\text{ and }n\in\mathbb{N}.$$ Hence, for any $y\in \mathcal{N}_\vartheta(g,Y)$ and $\varphi\in C(Y)$ with  $$\limsup_{N\to\infty}\frac{1}{N}\left\vert\sum_{n=0}^{N-1}\vartheta_n\varphi(g^ny)\right\vert>0,$$ we always have $$\limsup_{N\to\infty}\frac{1}{N}\left\vert\sum_{n=0}^{N-1}\vartheta_n\hat{\varphi}(f^nx)\right\vert>0\text{ for any }x\in \pi^{-1}(\{y\}).$$ As a result, $$\pi^{-1}(\mathcal{N}_\vartheta(g,Y))\subset\mathcal{N}_\vartheta(f,X).$$ Finally, by Lemma \ref{Lemma 2.3}, we have 
	\begin{multline*}
		h_{\mathrm{top}}(f,\mathcal{N}_\vartheta(f,X))\geq h_{\mathrm{top}}(f,\pi^{-1}(\mathcal{N}_\vartheta(g,Y)))\\\geq h_{\mathrm{top}}(g,\pi(\pi^{-1}(\mathcal{N}_\vartheta(g,Y))))= h_{\mathrm{top}}(g,\mathcal{N}_\vartheta(g,Y)).
	\end{multline*}
\end{proof}

Based on Lemma \ref{Lemma A}, Theorem \ref{Theorem 4.1}, Lemma \ref{Lemma 4.2} and Lemma \ref{Lemma 4.3}, we give the proof of Theorem \ref{Theorem B}.
\begin{proof}[Proof of Theorem \ref{Theorem B}]
	By Theorem \ref{Theorem 4.1}, we have $h_{\mathrm{top}}(f,X)=\frac{1}{L}h_{\mathrm{top}}(f^L,Y)>0$. Given $0<\eta<h_{\mathrm{top}}(f,X)$, we choose $\tau>0$ with $\eta+\tau<h_{\mathrm{top}}(f,X)$ and choose an integer $N_1\geq4$ large enough such that for any $n\geq N_1$, we have $$\frac{\log\lfloor\frac{1}{2}n\rfloor}{\log n}>\frac{\eta}{\eta+\tau}.$$ Denote $N=\lfloor\frac{\log N_1}{\eta+\tau}\rfloor+1$. Then by Theorem \ref{Theorem 4.1}, there exist $m,k\in\mathbb{N}^+$ with 
	\begin{equation}\label{equ (4.13)}
		k\in\{Ln:n\in\mathbb{N}^+\}, k\geq N \text{ and } \frac{1}{k}\log m>\eta+\tau,
	\end{equation} an alphabet $\mathcal{A}$ with $|\mathcal{A}|=m$ and a subsystem $(\Lambda,f^k)$ of $(Y,f^k)$ such that 
	\begin{enumerate}[(1)]
		\item $f^i(\Lambda)\cap f^j(\Lambda)=\emptyset$ for any $0\leq i<j\leq k-1$;
		\item there is a factor $\pi: (\Lambda,   f^k)\to (\mathcal{A}^{\mathbb{N}},  \sigma)$.
	\end{enumerate}
	As a result, 
	\begin{equation}\label{equ (4.14)}
		m>e^{k(\eta+\tau)}\geq e^{\log N_1}=N_1\text{ and thus }\frac{\log\lfloor\frac{1}{2}m\rfloor}{\log m}>\frac{\eta}{\eta+\tau}.
	\end{equation}
	Now, given a non-trivial weight $\vartheta\in \ell^\infty(\mathbb{N})$, since $f^i(\Lambda)\cap f^j(\Lambda)=\emptyset$ for any $0\leq i<j\leq k-1$, by Lemma \ref{Lemma 4.2}, there exist $0\leq\tau\leq k-1$ and a non-trivial weight $\varpi\in \ell^\infty(\mathbb{N})$ such that $$f^{k-\tau}(\mathcal{N}_\varpi(f^k,\Lambda))\subset\mathcal{N}_\vartheta(f,X).$$ Hence, by Lemma \ref{Lemma 2.2}, we have
	\begin{multline}\label{equ (4.15)}
		h_{\mathrm{top}}(f,\mathcal{N}_\vartheta(f,X))\geq h_{\mathrm{top}}(f,f^{k-\tau}(\mathcal{N}_\varpi(f^k,\Lambda)))\\=h_{\mathrm{top}}(f,\mathcal{N}_\varpi(f^k,\Lambda))=\frac{1}{k}h_{\mathrm{top}}(f^k,\mathcal{N}_\varpi(f^k,\Lambda)).
	\end{multline}
	By Lemma \ref{Lemma 4.3}, we have
	\begin{equation}\label{equ (4.16)}
		h_{\mathrm{top}}(f^k,\mathcal{N}_{\varpi}(f^k,\Lambda))\geq h_{\mathrm{top}}(\sigma,\mathcal{N}_{\varpi}(\sigma,\mathcal{A}^{\mathbb{N}})).
	\end{equation}
	By Lemma \ref{Lemma A}, we have that
	\begin{equation}\label{equ (4.17)}
		h_{\mathrm{top}}(\sigma,\mathcal{N}_{\varpi}(\sigma,\mathcal{A}^{\mathbb{N}}))\geq\log \lfloor\frac{1}{2}m\rfloor.
	\end{equation}
	Combining equations (\ref{equ (4.13)})-(\ref{equ (4.17)}), we have that $$h_{\mathrm{top}}(f,\mathcal{N}_\vartheta(f,X))\geq\frac{1}{k}\log \lfloor\frac{1}{2}m\rfloor=\frac{1}{k}\log m\frac{\log\lfloor\frac{1}{2}m\rfloor}{\log m}>(\eta+\tau)\frac{\eta}{\eta+\tau}=\eta.$$
	Finally, by the arbitrariness of $0<\eta<h_{\mathrm{top}}(f,X)$, we have that $$h_{\mathrm{top}}(f,\mathcal{N}_\vartheta(f,X))=h_{\mathrm{top}}(f,X)=\frac{1}{L}h_{\mathrm{top}}(f^L,Y)>0.$$
\end{proof}

\section{Proof of Theorem \ref{Theorem C}}\label{Section 6}
In this section, we prove Theorem \ref{Theorem C} following the same line of the proof of Theorem \ref{Theorem B}.  It is enough to prove the following theorem.
\begin{theorem}\label{Theorem 5.1}
	Suppose that $(X,f)$ is a dynamical system and $L\in\mathbb{N}^+$, if there exists a subsystem $(Y,f^L)$ of $(X,f^L)$ such that
	\begin{enumerate}[(1)]
		\item $(Y,f^L)$ satisfies the modified almost specification property with $h_{\mathrm{top}}(f^L,Y)>0$;
		\item $X=\bigcup_{i=0}^{L-1}f^i(Y)$.
	\end{enumerate}
	Then $h_{\mathrm{top}}(f,X)=\frac{1}{L}h_{\mathrm{top}}(f^L,Y)>0$ and for any $0<\eta<h_{\mathrm{top}}(f,X)$ and any $N\in\mathbb{N}^+$, there exist $m,k\in\mathbb{N}^+$ with $$k\in\{Ln:n\in\mathbb{N}^+\}, k\geq N \text{ and } \frac{1}{k}\log m>\eta,$$ an alphabet $\mathcal{A}$ with $|\mathcal{A}|=m$ and a subsystem $(\Lambda,f^k)$ of $(Y,f^k)$ such that 
	\begin{enumerate}[(1)]
		\item $f^i(\Lambda)\cap f^j(\Lambda)=\emptyset$ for any $0\leq i<j\leq k-1$;
		\item there is a factor $\pi: (\Lambda,   f^k)\to (\mathcal{A}^{\mathbb{N}},  \sigma)$.
	\end{enumerate}
\end{theorem}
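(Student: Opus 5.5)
The plan follows the proof of Theorem \ref{Theorem 4.1}, with the shadowing property replaced by the modified almost specification property and Lemma \ref{Lemma C} replaced by its mistake version, Lemma \ref{Lemma C-New}. The entropy identity $h_{\mathrm{top}}(f,X)=\frac1L h_{\mathrm{top}}(f^L,Y)$ follows from Lemma \ref{Lemma 2.2}, exactly as before. Given $\eta$, choose $\tau>0$ with $\eta L+4\tau<h_{\mathrm{top}}(f^L,Y)$ and an ergodic $\mu\in\mathcal M^e_{f^L}(Y)$ with $h_\mu(f^L)>\eta L+3\tau$. All marker and block points are taken in $S_\mu$. Since $S_\mu$ is $f^L$-invariant with $f^L(S_\mu)=S_\mu$, we have $S_\mu\subset\bigcap_i f^{Li}(Y)$, so the modified almost specification property applies to these points.

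Next, build the markers. Form $\nu=\frac1L\sum_{i<L}\mu\circ f^{-i}$, which is $f$-ergodic with positive entropy. Apply Lemma \ref{Lemma C-New} to $\nu$, to the mistake function $g$ of $(Y,f^L)$ transferred to $f$ (or apply it directly to $(Y,f^L)$ and $\mu$, which is cleaner), and to $A=S_\mu$. This yields $\varepsilon_\star$, a large $l$, and a point $x_0\in S_\mu$ such that $f^{il}x_0$ has mistake-return time $R_l(\,\cdot\,;g(l,\tilde\varepsilon);\cdot,\tilde\varepsilon)>6Ll$ for $0\le i\le 6L$. The mistake analogue of Claim 1 then says the following: for every shift $1\le j\le(6L-1)l$, the orbit segment of $x_0$ of length $l$ and its $j$-shift disagree by at least $\tilde\varepsilon$ at more than $g(l,\tilde\varepsilon)$ times.

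For the blocks, use Lemma \ref{Lemma 2.4} in place of Lemma \ref{Lemma B}. It gives a $(f^L;\delta_0,n,\varepsilon_0)$-separated set $\Gamma_n\subset S_\mu$ with $|\Gamma_n|\ge e^{n(\eta L+3\tau)}$. The density separation is what survives mistakes: if $\tilde\varepsilon$ is small and $n$ is large enough that $2g(n,\tilde\varepsilon)<\delta_0 n$, then two points that $\tilde\varepsilon$-trace distinct block elements, each up to $g(n,\tilde\varepsilon)$ errors, still differ somewhere. No recurrence of the blocks is needed, because specification concatenates arbitrary points. As in the construction of $G$, remove from $\Gamma_n$ the at most $6Ll$ points whose orbit mistake-matches a shift of the marker orbit, and take $\mathcal A=(\Gamma_n')^p$ with $k=L(6l+pn)$. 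To fit Definition \ref{Definition 2.3}, which requires all segments to have a common length $n$, choose $l$ to be a multiple of $n$. Then the marker of length $6Ll$ splits into $6l/n$ segments of equal $f^L$-length $n$, each with centre $f^{Lsn}x_0\in S_\mu$. Using finite-word specification and compactness, as in Claim 3, define $Z_{\underline\theta}$ as the closed set of points whose $f^L$-orbit lies in the mistake balls of the successive segments. Define $\Lambda=\bigcup Z_{\underline\theta}$. Closedness holds because the admissible mistake sets form a finite union over each finite window, continuity of $\pi$ follows as in Claim 4, and disjointness of the $Z_{\underline\theta}$ comes from the density separation.

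The main obstacle is the disjointness $\Lambda\cap f^r(\Lambda)=\emptyset$ for $1\le r<k$, treated in the same three ranges of $r$ as in Theorem \ref{Theorem 4.1}. With mistakes, two tracing orbits agree only off a set of density $O(g(n,\tilde\varepsilon)/n)$. The plan is therefore to quantify Claim 1 and estimate \eqref{equ (4.8)} in terms of counts of disagreements. For the marker, I will use that the mistake-return time exceeds $6Ll$, so the mismatches at every shift number more than $g(l,\tilde\varepsilon)$, which exceeds the combined mistakes of two tracing orbits over a window. For $r$ not a multiple of $L$, work with $f$-orbits through the uniform continuity constant, as in the choice of $\varepsilon$ in Theorem \ref{Theorem 4.1}. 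Reconciling the mistake budgets, namely $2g$ from the tracing against the $g$ in the definition of $R_l$, needs care. I would first try to absorb this by applying Lemma \ref{Lemma C-New} with mistake function $3g$, which is still a mistake function, and with radius $3\tilde\varepsilon$. Finally, $\frac1k\log m>\eta$ is obtained by taking $p$ large, exactly as in \eqref{equ (4.9)}.
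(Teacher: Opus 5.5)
Your outline is the paper's own route: density-separated blocks from Lemma \ref{Lemma 2.4} inside $S_\mu$, Lemma \ref{Lemma C-New} applied to $\nu$ with $A=S_\mu$, deletion of blocks that mistake-match a shift of the marker, specification plus compactness for $Z_{\underline\theta}$, and the three ranges of $r$. As written, however, the quantitative set-up has gaps exactly where the mistake version differs from Theorem \ref{Theorem 4.1}.

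\textbf{Block length.} You leave the block length $n$ free, subject only to $n\mid l$. But $l$ comes out of Furstenberg recurrence inside Lemma \ref{Lemma C-New}, so it is uncontrolled relative to a previously fixed $n$. Two steps then fail.
\begin{enumerate}[(1)]
\item Deleting up to $6Ll$ points can exhaust $\Gamma_n$ unless $e^{n(\eta L+3\tau)}\gg 6Ll$.
\item In the ranges $1\le r\le (6L-1)l$ and $k-(6L-1)l\le r<k$, an $f$-window of length $l$ meets about $l/(Ln)$ tracing segments on each side. The two tracing orbits can therefore accumulate about $(l/n)\,g(n,\varepsilon)$ mistakes. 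For fixed $n$ this is linear in $l$, so it cannot be beaten by the sublinear number of disagreements the marker guarantees.
\end{enumerate}
The paper avoids both problems by taking the blocks from $\Gamma_l(f^L)$ with the same $l$, i.e.\ $n=l$. Then the marker consists of $6$ segments, $k=L(6l+pl)$, a window of length $l\le Ll$ meets at most two segments, and $l\ge N_4$ gives $e^{l(\eta L+3\tau)}>e^{l(\eta L+2\tau)}+6Ll$.

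\textbf{Mistake budgets.} The reconciliation you leave open is the one genuinely new ingredient, and your suggestion of using $3g$ with radius $3\tilde\varepsilon$ does not achieve it. Specification for $f^L$ at radius $\varepsilon$ gives, via uniform continuity of $f,\dots,f^{L-1}$, only $f$-closeness $\hat\varepsilon$ at intermediate times, so the specification radius must be smaller than the $f$-radius. Moreover, each $f^L$-mistake spoils up to $L$ consecutive $f$-times. So one segment carries up to $Lg(l,\varepsilon)$ $f$-mistakes, with $\varepsilon$ smaller than the return radius, and nothing forces $g(l,\tilde\varepsilon)\ge g(l,\varepsilon)$. Because $\varepsilon_\star$ in Lemma \ref{Lemma C-New} depends on the mistake function, this coupling must be built in before the lemma is applied. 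The paper uses $g_\star(n,\varepsilon)=3Lg(n,\varepsilon/k_\varepsilon)$, where $k_\varepsilon$ is chosen so that $d(x,y)\le\varepsilon/k_\varepsilon$ implies $d_L(f;x,y)\le\varepsilon/3$. It then sets $\tilde\varepsilon=3\hat\varepsilon$ and $\varepsilon=\tilde\varepsilon/k_{\tilde\varepsilon}$. This gives $g_\star(l,\tilde\varepsilon)=3Lg(l,\varepsilon)$, which absorbs exactly $Lg$ from one orbit plus $2Lg$ from the shifted orbit straddling two segments. Similarly, the deletion step needs $6Lg(l,\varepsilon)<\delta_1 l$, not $2g<\delta_0 n$.

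\textbf{The $f^L$ alternative.} Your parenthetical option of applying Lemma \ref{Lemma C-New} directly to $(Y,f^L)$ and $\mu$ would not work. $f^L$-return times give no information about $f^rx_0$ for $L\nmid r$. The first and third ranges need exactly this when the sets $f^i(Y)$ overlap, and uniform continuity cannot supply it. You need $f$-return times for the $f$-ergodic measure $\nu$, as in your first option.
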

Before we give the proof, we prove some basic facts for the modified almost specification property. Recall that the \emph{measure center} of a dynamical system $(X,f)$ is  $$C_f(X):=\overline{\bigcup_{\mu\in\mathcal{M}_f(X)}S_{\mu}}.$$
Note that $f(S_{\mu})=S_{\mu}$ for any invariant measure $\mu$. Indeed, since $S_{\mu}$ is a compact set, $f(S_\mu)$ is compact and thus closed, and one has $f (S_\mu) \subseteq S_\mu \subseteq f^{-1}\left(f\left(S_\mu\right)\right)$, then 
$1=\mu(S_{\mu})\leq \mu(f^{-1}(f (S_{\mu})))=\mu(f (S_{\mu}))$,
hence we get $S_{\mu} \subseteq f (S_{\mu})$. As a result, $ \bigcup_{\mu \in \mathcal{M}_f(X)} S_\mu \subseteq f\left(C_f(X)\right)$, then we have  $ C_f(X) \subseteq f\left(C_f(X)\right)$. Therefore, $$ f\left(C_f(X)\right)=C_f(X) \text{ and thus }C_f(X)\subset \bigcap_{i=0}^{\infty}f^i(X).$$

Combining Definition \ref{Definition 2.3} with the compactness of $X$, we have the following. 
\begin{lemma}\label{Lemma 5.4}
	Suppose that $(X,f)$ is a dynamical system satisfying the modified almost specification property with the gap function $g$ and a function $k_g$. Then for any $\varepsilon>0$, any points $\{x_i\}_{i=1}^\infty\in C_f(X)$ and any integer $n\geq k_g(\varepsilon)$, we have $$\bigcap_{j=1}^\infty f^{-(j-1)n}(\overline{B_{n}(f;g(n,\varepsilon);x_j,\varepsilon)})\neq\emptyset.$$
\end{lemma}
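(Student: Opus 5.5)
The plan is to reduce to finitely many constraints, using Definition \ref{Definition 2.3}, and then pass to the limit with a compactness (nested closed sets) argument. Fix $\varepsilon>0$, $n\ge k_g(\varepsilon)$ and points $\{x_i\}_{i=1}^\infty\subset C_f(X)$. Since $C_f(X)\subset\bigcap_{i=0}^\infty f^i(X)$, as shown just before the lemma, every finite subfamily $x_1,\dots,x_m$ is admissible in Definition \ref{Definition 2.3}. Hence for each $m\in\mathbb{N}^+$ the set
\[
K_m:=\bigcap_{j=1}^m f^{-(j-1)n}\bigl(\overline{B_{n}(f;g(n,\varepsilon);x_j,\varepsilon)}\bigr)
\]
contains the nonempty set $\bigcap_{j=1}^m f^{-(j-1)n}(B_{n}(f;g(n,\varepsilon);x_j,\varepsilon))$, so $K_m\neq\emptyset$.

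Next I check that each $K_m$ is closed. Each closure $\overline{B_n(\cdots)}$ is closed, and preimages of closed sets under the continuous map $f^{(j-1)n}$ are closed. A finite intersection of closed sets is closed. So $K_m$ is a closed, hence compact, subset of the compact space $X$.

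The sets also satisfy $K_{m+1}\subset K_m$, because $K_{m+1}$ is $K_m$ intersected with one more set. By the finite intersection property in the compact space $X$, the intersection $\bigcap_{m\ge1}K_m$ is nonempty. This intersection is exactly the set $\bigcap_{j=1}^\infty f^{-(j-1)n}(\overline{B_{n}(f;g(n,\varepsilon);x_j,\varepsilon)})$, which gives the claim.

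There is no real obstacle. The only point needing care is that the mistake balls are not closed, which is why we take closures before applying compactness. The other point to remember is invoking the inclusion $C_f(X)\subset\bigcap_i f^i(X)$, so that the hypothesis of Definition \ref{Definition 2.3} applies to the given points.
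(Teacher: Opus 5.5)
Your proof is correct and is exactly the argument the paper has in mind: the paper states the lemma without a written proof, saying only that it follows by combining Definition \ref{Definition 2.3} with the compactness of $X$. You spell this out by using $C_f(X)\subset\bigcap_{i\ge0}f^i(X)$ to get nonempty finite intersections, passing to closures, and applying the finite intersection property to the nested compact sets $K_m$.
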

Now, we give the proof of Theorem \ref{Theorem 5.1}.
\begin{proof}[Proof of Theorem \ref{Theorem 5.1}]
	Suppose that $(Y,f^L)$ satisfies the modified almost specification property with mistake function $g$ and a function $k_g$. We define a mistake function $g_\star$ by  $$g_\star(n,\varepsilon):=3Lg(n,\frac{1}{k_\varepsilon}\varepsilon)\text{ for any }n\in\mathbb{N}^+\text{ and any }\varepsilon>0,$$ where $$k_\varepsilon:=\min\{k\in\mathbb{N}^+:k\geq3,~d_L(f;x,y)\leq\frac{1}{3}\varepsilon\text{ for any }x,y\in X\text{ with }d(x,y)\leq\frac{1}{k}\varepsilon\}.$$
	
	By Lemma \ref{Lemma 2.2}, we have that $h_{\mathrm{top}}(f,X)=\frac{1}{L}h_{\mathrm{top}}(f^L,Y)>0$. Given $0<\eta<h_{\mathrm{top}}(f,X)$, we choose $\tau>0$ such that $$\eta L+4\tau<h_{\mathrm{top}}(f^L,Y).$$ By the variational principle, there exists $\mu\in\mathcal{M}_{f^L}^e(Y)$ such that $$h_\mu(f^L)>\eta L+3\tau.$$ By Lemma \ref{Lemma 2.4}, there exist $\delta_1>0$, $\varepsilon_1>0$ and $N_1\in\mathbb{N}^+$ such that for any $n\geq N_1$,   there exists a $(f^L;\delta_1, n,  \varepsilon_1)$-separated set $\Gamma_n(f^L)\subset S_\mu$ with $$\frac{\log|\Gamma_n(f^L)|}{n}\geq \eta L+3\tau.$$
	
	We define $$\nu:=\frac{1}{L}\sum_{i=0}^{L-1}\mu\circ f^{-i},$$ then  $\nu\in\mathcal{M}_f^e(X)$ and $h_\nu(f)>0$.
	
	We set $\delta_\star=\frac{1}{2L}$. Since $\nu(S_\mu)\geq\frac{1}{L}>\delta_\star$, by using Lemma \ref{Lemma C-New} for $g_\star$, there exists $\varepsilon_\star>0$ such that for any $0<\tilde{\varepsilon}\leq\varepsilon_\star$, any $N_2\in\mathbb{N}^+$ and any $\varepsilon_2>0$, there exist $l\geq N_2$, $F\subset S_\mu$ with $\nu(F)>0$ and $\operatorname{diam}(F)<\varepsilon_2$, such that
	\begin{equation}\label{equ (5.1)}
		\nu(\{x\in F:f^{il}x\in F\text{ and }R_{l}(f;g_\star(l,\tilde{\varepsilon});f^{il}x,\tilde{\varepsilon})>6Ll\text{ for any }0\leq i\leq 6L\})>0.
	\end{equation}
	
	We set $$\hat{\varepsilon}:=\min\{\frac{1}{6}\varepsilon_\star,\frac{1}{6}\varepsilon_1\},~\tilde{\varepsilon}:=3\hat{\varepsilon}\text{ and }\varepsilon:=\frac{1}{k_{\tilde{\varepsilon}}}\tilde{\varepsilon}\leq\frac{1}{3}\tilde{\varepsilon}=\hat{\varepsilon}.$$
	
	Then $g_\star(n,\tilde{\varepsilon})=3Lg(n,\varepsilon)$ and for any $x,y\in X$ with $d(x,y)\leq\varepsilon$, we always have 
	$$d_L(f;x,y)\leq\frac{1}{3}\tilde{\varepsilon}=\hat{\varepsilon}.$$
	
	Since $$\lim_{n\to\infty}\frac{g(n,\varepsilon)}{n}=0,$$ there exists $N_3\in\mathbb{N}$ such that for any $n\geq N_3$, we have
	
	\begin{equation}\label{equ (5.2)}
		6Lg(n,\varepsilon)<\delta_1 n.
	\end{equation}
	We choose $N_4\in\mathbb{N}^+$ large enough such that for any $n\geq N_4$, we have  
	\begin{equation}\label{equ (5.4)}
		e^{n(\eta L+3\tau)}>e^{n(\eta L+2\tau)}+6Ln.
	\end{equation}
	Now, we set $$N_2:=\max\{N,N_1,N_3,N_4,k_g(\varepsilon)\}\text{ and }\varepsilon_2:=\varepsilon,$$ get $l\geq N_2$, $F\subset S_\mu$ with $\nu(F)>0$ and $\operatorname{diam}(F)<\varepsilon$ satisfying equation (\ref{equ (5.1)}). In particular, we can choose $x_0\in F$ such that
	\begin{equation}\label{equ (5.5)}
		R_{l}(f;3Lg(l,\varepsilon);f^{il}x_0,3\hat{\varepsilon})>6Ll\text{ for any }0\leq i\leq 6L.
	\end{equation}
	
	\textbf{Claim 1.} For any $1\leq j\leq (6L-1)l$, we always have
	$$f^jx_0\notin \overline{B_l(f;3Lg(l,\varepsilon);x_0,2\hat{\varepsilon})}.$$
    \begin{proof}[Proof of Claim 1]
    	On the contrary, we assume that $$f^jx_0\in \overline{B_l(f;3Lg(l,\varepsilon);x_0,2\hat{\varepsilon})}.$$
    Then we have that $R_{l}(f;3Lg(l,\varepsilon);x_0,3\hat{\varepsilon})\leq (6L-1)l$, which is a contradiction to equation (\ref{equ (5.5)}).
    \end{proof}

	Since $l\geq N_1$, we can choose a $(f^L;\delta_1,l,\varepsilon_1)$-separated set $\Gamma_l(f^L)\subset S_\mu$ with $$\frac{\log|\Gamma_l(f^L)|}{l}\geq \eta L+3\tau.$$ Since $l\geq N_4$, by equation (\ref{equ (5.4)}), we have that 
	\begin{equation}\label{equ (5.6)}
		\frac{1}{l}\log(|\Gamma_{l}(f^L)|-6Ll)>\eta L+2\tau.
	\end{equation}
	Denote
	$$G:=\{x\in\Gamma_{l}(f^L): x\in\overline{B_{Ll}(f;3Lg(l,\varepsilon);f^jx_0,2\hat{\varepsilon})}\text{ for some } 0\leq j\leq 5Ll\}.$$
	
	\textbf{Claim 2.} We have $$|G|\leq 5Ll+1\leq6Ll.$$
	\begin{proof}[Proof of Claim 2]
		On the contrary, there exist $0\leq j\leq 5Ll$ and $$x\neq y\in\Gamma_{l}(f^L)\cap\overline{B_{Ll}(f;3Lg(l,\varepsilon);f^jx_0,2\hat{\varepsilon})}.$$
		Hence, $x\in\overline{B_{Ll}(f;6Lg(l,\varepsilon);y,4\hat{\varepsilon})}$. As a result, by equation (\ref{equ (5.2)}), we have $$|\{0\leq i\leq Ll-1:d(f^ix,f^iy)\leq 4\hat{\varepsilon}<\varepsilon_1\}|\geq Ll-6Lg(l,\varepsilon)>Ll-\delta_1l.$$
		Hence, $x$ and $y$ are not $(f^L;\delta_1,l,\varepsilon_1)$-separated, which is a contradiction.
	\end{proof}
	Denote $$\Gamma'_{l}(f^L):=\Gamma_{l}(f^L)\setminus G,$$ then combining Claim 2 with equation (\ref{equ (5.6)}), we have 
	\begin{equation}\label{equ (5.7)}
		\frac{1}{l}\log(|\Gamma'_{l}(f^L)|)>\eta L+2\tau.
	\end{equation}
	And for any $x\in \Gamma'_{l}(f^L)$ and any $0\leq j\leq 5Ll$, we always have that
	\begin{equation}\label{equ (5.8)}
		x\notin\overline{B_{Ll}(f;3Lg(l,\varepsilon);f^jx_0,2\hat{\varepsilon})}.
	\end{equation}
	
	We choose $p\in\mathbb{N}^+$ large enough such that 
	\begin{equation}\label{equ (5.9)}
		\frac{pl(\eta L+2\tau)}{6l+pl}>\eta L+\tau
	\end{equation} and denote $$k=L(6l+pl)\geq N\text{ and }m=|\Gamma'_{l}(f^L)|^p.$$ Then combining equation (\ref{equ (5.7)}) and equation (\ref{equ (5.9)}), we have that $$\frac{1}{k}\log m>\eta+\frac{1}{L}\tau>\eta.$$
	Now we denote an alphabet $$\mathcal{A}:=\Gamma'_{l}(f^L)^p=\Gamma'_{l}(f^L)\times\Gamma'_{l}(f^L)\times\cdots\times\Gamma'_{l}(f^L)\text{ with }|\mathcal{A}|=m.$$ 
	For a element $\theta=(\theta^1,\theta^2,\cdots,\theta^p)\in\mathcal{A}$, we define $$O(\theta):=\bigcap_{i=0}^{5}f^{-iL}(\overline{B_{Ll}(f;Lg(l,\varepsilon);f^{iL}x_0,\hat{\varepsilon})})\cap \bigcap_{i=1}^{p}f^{-6Ll-(i-1)Ll}(\overline{B_{Ll}(f;Lg(l,\varepsilon);\theta^i,\hat{\varepsilon})}).$$
	Recall that for any $x,y\in X$ with $d(x,y)\leq\varepsilon$, we always have 
	$d_L(f;x,y)\leq\hat{\varepsilon}$ and thus $$\overline{B_l(f^L;g(l,\varepsilon);x,\varepsilon)}\subset\overline{B_{Ll}(f;Lg(l,\varepsilon);x,\hat{\varepsilon})}$$
	Combining with the facts that $(Y,f^L)$ satisfies the modified almost specification property with mistake function $g$ and a function $k_g$, and $l\geq k_g(\varepsilon)$, we conclude that $Y\cap O(\theta)$ is non-empty and compact. 
	
	Given $\underline{\theta}=\theta_0\theta_1\cdots\in\mathcal{A}^{\mathbb{N}}$, we define $$Z_{\underline{\theta}}:=Y\cap\bigcap_{i\geq0}f^{-ki}(O(\theta_i)).$$
	Similarly, by Lemma \ref{Lemma 5.4}, we conclude that  $Z_{\underline{\theta}}$ is nonempty and closed.  We define $$\Lambda:=\bigcup_{\underline{\theta}\in\mathcal{A}^\mathbb{N}}Z_{\underline{\theta}},$$ then we have $f^k(\Lambda)\subset \Lambda\subset Y$.
	
	\textbf{Claim 3.} We claim that $Z_{\underline{\theta}}\cap Z_{\underline{\hat{\theta}}}=\emptyset$ for any $\underline{\theta}\neq\underline{\hat{\theta}}\in\mathcal{A}^\mathbb{N}$.
	\begin{proof}[Proof of Claim 3]
		Since $\underline{\theta}\neq\underline{\hat{\theta}}$, there exist $i\in\mathbb{N}$ and $1\leq j\leq p$ such that $\theta_i^{j}\neq\hat{\theta}_i^j\in\Gamma'_{l}(f^L)$. Denote $$r=ik+6Ll+(j-1)Ll,$$ then for any $z\in Z_{\underline{\theta}}$ and $\hat{z}\in Z_{\underline{\hat{\theta}}}$, we have $$f^rz\in \overline{B_{Ll}(f;Lg(l,\varepsilon);\theta_i^{j},\hat{\varepsilon})}\text{ and }f^r\hat{z}\in \overline{B_{Ll}(f;Lg(l,\varepsilon);\hat{\theta}_i^{j},\hat{\varepsilon})}.$$ Since $\hat{\varepsilon}\leq\frac{1}{6}\varepsilon_1$ and $\theta_i^{j}\neq\hat{\theta}_i^j\in\Gamma'_{l}(f^L)$, we have  $$d(f^{r+t}z,f^{r+t}\hat{z})>\varepsilon_1-2\hat{\varepsilon}\geq\frac{2}{3}\varepsilon_1>0\text{ for some }0\leq t\leq Ll-1.$$ Hence, $z\neq\hat{z}$, as a result, $Z_{\underline{\theta}}\cap Z_{\underline{\hat{\theta}}}=\emptyset$.
	\end{proof}

	\textbf{Claim 4.} We claim that $\Lambda$ is closed and hence $(\Lambda,f^k)$ is a subsystem of $(X,f^k)$.
	\begin{proof}[Proof of Claim 4]
		Since $f^k(\Lambda)\subset \Lambda$, it suffices to show that $\Lambda$ is closed. 	Given $\underline{\theta}=\theta_0\theta_1\cdots\in\mathcal{A}^{\mathbb{N}}$ and $r\in\mathbb{N}$, we define $$Z_{\underline{\theta},r}:=Y\cap\bigcap_{i=0}^{r}f^{-ki}(O(\theta_i)).$$ Then $Z_{\underline{\theta},r}$ is nonempty, closed and for any $\underline{\theta},\underline{\hat{\theta}}\in\mathcal{A}^\mathbb{N}$ with $\theta_0\theta_1\cdots\theta_r=\hat{\theta}_0\hat{\theta}_1\cdots\hat{\theta}_r$, we always have that $Z_{\underline{\theta},r}=Z_{\underline{\hat{\theta}},r}$. Therefore, $$\Lambda=\bigcup_{\underline{\theta}\in\mathcal{A}^\mathbb{N}}Z_{\underline{\theta}}=\bigcap_{r\geq 0}\bigcup_{\underline{\theta}\in\mathcal{A}^\mathbb{N}}Z_{\underline{\theta},r}$$ is closed.
	\end{proof}
	
	Now, we define a map $\pi: (\Lambda,   f^k)\to (\mathcal{A}^{\mathbb{N}},  \sigma)$ as $\pi(z):=\underline{\theta}$ for any $z\in Z_{\underline{\theta}}$. Then $\pi\circ f^k=\sigma\circ\pi$. By Claim 3, $\pi$ is well-defined. Since $Z_{\underline{\theta}}\neq\emptyset$ for any $\underline{\theta}\in\mathcal{A}^\mathbb{N}$, we conclude that $\pi$ is surjective. 
	
	\textbf{Claim 5.} We claim that $\pi$ is continuous.
	\begin{proof}[Proof of Claim 5]
		It is enough to show that for any $r\in\mathbb{N}$, any $z\in Z_{\underline{\theta}}$ and $\hat{z}\in Z_{\underline{\hat{\theta}}}$ with $d(f^iz,f^i\hat{z})<\frac{2}{3}\varepsilon_1$ for any $0\leq i\leq (r+1)k-1$, we always have that $\theta_0\theta_1\cdots\theta_r=\hat{\theta}_0\hat{\theta}_1\cdots\hat{\theta}_r$. We assume that there exist $0\leq s\leq r$ and $1\leq j\leq p$ such that $\theta_s^j\neq \hat{\theta}_s^j\in\Gamma'_{l}(f^L)$, then follow the same line of the proof of Claim 3, we conclude that $$d(f^tz,f^t\hat{z})>\frac{2}{3}\varepsilon_1\text{ for some }0\leq t\leq (r+1)k-1.$$ It is a contradiction to $d(f^iz,f^i\hat{z})<\frac{2}{3}\varepsilon_1$ for any $0\leq i\leq (r+1)k-1$.
	\end{proof}
	As a result, $\pi$ is factor. Finally, we show that $f^i(\Lambda)\cap f^j(\Lambda)=\emptyset$ for any $0\leq i<j\leq k-1$. When $k=1$, the conclusion is clear. Hence, we may assume in the following that $k\geq2$. It is enough to show that $\Lambda\cap f^r(\Lambda)=\emptyset$ for any $1\leq r\leq k-1$.
	
	Suppose that $\Lambda\cap f^{r}(\Lambda)\neq\emptyset$ for some $1\leq r\leq k-1$. Then for any $z\in\Lambda\cap f^{r}(\Lambda)$, there exist $\underline{\theta},\underline{\hat{\theta}}\in\mathcal{A}^\mathbb{N}$ such that $z\in Z_{\underline{\theta}}\cap f^{r}(Z_{\underline{\hat{\theta}}})$. 
	
	If $1\leq r\leq (6L-1)l$, then we have $$z\in\overline{B_{Ll}(f;Lg(l,\varepsilon);x_0,\hat{\varepsilon})}\cap\overline{B_{l}(f;2Lg(l,\varepsilon);f^rx_0,\hat{\varepsilon})}.$$ Hence, $$f^rx_0\in\overline{B_{l}(f;3Lg(l,\varepsilon);x_0,2\hat{\varepsilon})}.$$ It is a contradiction to Claim 1.
	
	If $(6L-1)l< r< k-(6L-1)l$, choose $y\in\Lambda$ with $f^ry=z$, then there always exist $l-1\leq i\leq l-1+Ll-1$ and $1\leq t\leq p$ such that $$f^iz=f^{r+i}y\in\overline{B_{Ll}(f;Lg(l,\varepsilon);\theta_0^t,\hat{\varepsilon})}
	\text{ and }f^{i}z\in\overline{B_{Ll}(f;2Lg(l,\varepsilon);f^ix_0,\hat{\varepsilon})}$$ As a result, $$\theta_0^t\in\overline{B_{Ll}(f;3Lg(l,\varepsilon);f^ix_0,2\hat{\varepsilon})}.$$ It is a contradiction to equation (\ref{equ (5.8)}).
	
	If $k-(6L-1)l\leq r\leq k-1$, then $1\leq k-r\leq(6L-1)l$. Since there exists $y\in\Lambda$ with $f^ry=z$, we have $$f^{k-r}z\in\overline{B_{Ll}(f;Lg(l,\varepsilon);x_0,\hat{\varepsilon})}\cap\overline{B_{l}(f;2Lg(l,\varepsilon);f^{k-r}x_0,\hat{\varepsilon})}.$$ Hence, $$f^{k-r}x_0\in\overline{B_{l}(f;3Lg(l,\varepsilon);x_0,2\hat{\varepsilon})}.$$ It is a contradiction to Claim 1.
	
	In conclusion, we have that $\Lambda\cap f^r(\Lambda)=\emptyset$ for any $1\leq r\leq k-1$.
	
	Now, we have finished the proof of Theorem \ref{Theorem 5.1}.
	 
\end{proof}

\section{A detailed explanation for Remark \ref{Remark 1.1} and Remark \ref{Remark 1.2}}\label{Section 7} 
In this section, we give a detailed explanation for Remark \ref{Remark 1.1} and Remark \ref{Remark 1.2}. First, we prove the following theorem.
\begin{theorem}\label{Theorem 6.1}
	Suppose that $(X,f)$ is a dynamical system and $L\in\mathbb{N}^+$, if there exists a subsystem $(Y,f^L)$ of $(X,f^L)$ such that
	\begin{enumerate}[(1)]
		\item $(Y,f^L)$ satisfies the shadowing property;
		\item $X=\bigcup_{i=0}^{L-1}f^i(Y)$;
		\item $f^{i}(Y)\cap f^{j}(Y)=\emptyset$ for any $0\leq i<j\leq L-1$.
	\end{enumerate}
	Then $(X,f)$ satisfies the shadowing property.
\end{theorem}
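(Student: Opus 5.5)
The plan is to transfer pseudo-orbits of $f$ to pseudo-orbits of $f^L$ on $Y$, shadow them there, and read off an $f$-shadowing point.

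\textbf{Step 1: the pieces form a clopen cyclic partition.} Set $Y_i:=f^i(Y)$ for $0\le i\le L-1$. These are compact, pairwise disjoint by (3), and cover $X$ by (2), so each $Y_i$ is clopen. Put
\[
\gamma:=\min_{i\ne j}\operatorname{dist}(Y_i,Y_j)>0 .
\]
Since $f^L(Y)\subset Y$, we get $f(Y_{L-1})=f^L(Y)\subset Y_0$, and $f(Y_i)=Y_{i+1}$ for $i<L-1$. Hence every point of $X$ has a well-defined phase $i$, and $f$ advances the phase by $1$ mod $L$.

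\textbf{Step 2: choose constants.} Fix $\varepsilon>0$ with $\varepsilon<\gamma$. By uniform continuity choose $\varepsilon'\le\varepsilon$ such that $d(a,b)<\varepsilon'$ implies $d_L(f;a,b)<\varepsilon$. Shadowing of $(Y,f^L)$ gives $\delta'>0$ such that every $\delta'$-pseudo orbit of $f^L$ in $Y$ is $\varepsilon'$-traced by a point of $Y$. Then choose $\delta<\gamma$ so small that any $\delta$-pseudo orbit $\{x_n\}$ of $f$ satisfies
\[
d\bigl(f^L(x_{n}),x_{n+L}\bigr)<\delta' \quad\text{for all } n .
\]
This is a standard telescoping estimate for $L$ steps using uniform continuity of $f,\dots,f^{L-1}$.

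\textbf{Step 3: phase control and transfer to $Y$.} Since $\delta<\gamma$ and $f(x_n)$ lies in phase $i+1$ whenever $x_n$ lies in phase $i$, the inequality $d(f(x_n),x_{n+1})<\delta$ forces $x_{n+1}$ into phase $i+1$. So a pseudo orbit with $x_0\in Y_{i_0}$ satisfies $x_n\in Y_{i_0+n \bmod L}$.

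The main obstacle is that $Y_{i_0}$ need not equal $Y$. I would handle this by prepending: pick $y\in Y$ with $f^{s}(y)$ close to $x_0$, where $s\equiv i_0$. A cleaner route is to start the sampling at the first index $n_0\in\{0,\dots,L-1\}$ with $x_{n_0}\in Y$. The sequence $(x_{n_0+jL})_{j\ge0}$ lies in $Y$ and is a $\delta'$-pseudo orbit of $f^L$, so it is $\varepsilon'$-traced by some $z\in Y$. Then for $0\le t<L$,
\[
d\bigl(f^{jL+t}z,\,f^t(x_{n_0+jL})\bigr)<\varepsilon ,
\]
and also $d\bigl(f^t x_{n_0+jL},\,x_{n_0+jL+t}\bigr)$ is small by Step 2. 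Together these give $d(f^m z, x_{n_0+m})<2\varepsilon$ for all $m\ge0$.

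\textbf{Step 4: fix the initial segment.} It remains to find $w$ with $f^{n_0}w=z$ that tracks $x_0,\dots,x_{n_0-1}$. Here $x_0\in Y_{L-n_0}$, so $x_0=f^{L-n_0}(u)$ for some $u\in Y$. Rather than pulling back $z$ directly, I would redo Step 3 with the $f^L$-pseudo orbit
\[
u,\; x_{n_0},\; x_{n_0+L},\; \dots
\]
in $Y$. Its first jump is small because $f^L(u)=f^{n_0}(x_0)$ is close to $x_{n_0}$, by the same telescoping estimate. This yields $z'\in Y$ tracing it, and $w:=f^{L-n_0}(z')$ then $\varepsilon$-traces $\{x_n\}$ from time $0$.

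Rescaling $\varepsilon$ by a constant completes the proof. Thus the only real subtlety is the phase bookkeeping, and lifting via preimages in $Y$ (which exist by (2)) resolves it.
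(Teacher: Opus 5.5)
Your proposal is correct and is essentially the paper's proof. Your Step 4 is exactly the paper's device: pull $x_0\in f^k(Y)$ back to some $y_0\in Y$ and prepend $y_0,fy_0,\dots,f^{k-1}y_0$. Then use the positive gap between the disjoint pieces $f^i(Y)$, together with the telescoping estimate, to see that every $L$-th term lies in $Y$ and forms a pseudo-orbit of $f^L$. Shadow that pseudo-orbit in $(Y,f^L)$ and push the tracing point forward by $f^k$; your Step 3 detour is not needed.

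The only bookkeeping to make explicit is that $\delta$ must control the telescoped errors $d(f^t x_n,x_{n+t})$ for all $1\le t\le L$, not just $t=L$. The paper arranges this by requiring $d(a,b)<\delta\Rightarrow d_L(f;a,b)<\frac{1}{L}\min\{\delta_1,\delta_2,\frac{1}{2}\varepsilon\}$.
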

\begin{proof}
	Given $\varepsilon>0$, we choose $0<\varepsilon_1\leq\varepsilon$ such that for any $x,y\in X$ with $d(x,y)<\varepsilon_1$, we always have that $d_{L}(f;x,y)<\frac{1}{2}\varepsilon$. Since $(Y,f^L)$ satisfies the shadowing property, there exists $\delta_1>0$ such that every $\delta_1$-pseudo orbit of $f^L$ is $\varepsilon_1$-traced by some point in $Y$ for $f^L$. Since $f^{i}(Y)\cap f^{j}(Y)=\emptyset$ for any $0\leq i<j\leq L-1$, we can choose $\delta_2>0$ such that for any $x,y\in X$ with $d(x,y)<\delta_2$, we always have that $x,y\in f^i(Y)\text{ for some }0\leq i\leq L-1$. Now, we  choose $$0<\delta\leq\frac{1}{L}\min\{\delta_1,\delta_2,\frac{1}{2}\varepsilon\}$$ such that for any $x,y\in X$ with $d(x,y)<\delta$, we always have that $$d_{L}(f;x,y)<\frac{1}{L}\min\{\delta_1,\delta_2,\frac{1}{2}\varepsilon\}.$$ We suppose that $\mathcal{L}:=x_0x_1\cdots$ is a $\delta$-pseudo orbit of $f$ with $x_0\in f^k(Y)$ for some $0\leq k\leq L-1$. We choose $y_0\in Y$ with $f^ky=x_0$ and define 
	\[y_i=\begin{cases}
		f^iy_0 & \text{if }0\leq i\leq k-1,\\
		x_{i-k} & \text{if }i\geq k.
	\end{cases}
	\]
	Then $\widetilde{\mathcal{L}}:=y_0y_1\cdots$ is a $\delta$-pseudo orbit of $f$ with $y_0\in Y$. For any $r\in\mathbb{N}$ and any $1\leq s\leq L$, we have 
	\begin{equation*}
		\begin{split}
			d(f^{s}y_{rL},y_{rL+s})&\leq d(f^{s}y_{rL},fy_{rL+s-1})+d(fy_{rL+s-1},y_{rL+s})\\
			&\leq d(f^{s}y_{rL},f^2y_{rL+s-2})+d(f^2y_{rL+s-2},fy_{rL+s-1})\\&\quad+d(fy_{rL+s-1},y_{rL+s})\\
			&\leq\sum_{i=0}^{s-1}d(f^{i+1}y_{rL+s-(i+1)},f^iy_{rL+s-i})\\&< s\frac{1}{L}\min\{\delta_1,\delta_2,\frac{1}{2}\varepsilon\}\\&\leq\min\{\delta_1,\delta_2,\frac{1}{2}\varepsilon\}.
		\end{split}
	\end{equation*}
	Hence, we always have that $y_{rL}\in Y$ for any $r\in\mathbb{N}$ and $\widetilde{\mathcal{L}}_L:=y_0y_Ly_{2L}\cdots$ is a $\delta_1$-pseudo orbit of $f^L$. Hence, we can find $x\in Y$ such that
	\begin{equation}\label{equ (6.1)}
		d(f^{iL}x,y_{iL})<\varepsilon_1\leq\varepsilon\text{ for any }i\in\mathbb{N}.
	\end{equation} As a result, for any $r\in\mathbb{N}$ and any $1\leq s\leq L-1$, we have 
	\begin{equation}\label{equ (6.2)}
		d(f^{rL+s}x,y_{rL+s})\leq d(f^{rL+s}x,f^sy_{rL})+d(f^sy_{rL},y_{rL+s})<\frac{1}{2}\varepsilon+\frac{1}{2}\varepsilon=\varepsilon.
	\end{equation} 
	Combining equation (\ref{equ (6.1)}) and equation (\ref{equ (6.2)}), we have that $$d(f^ix,y_i)<\varepsilon\text{ for any }i\in\mathbb{N}.$$ From the definition of $\{y_i\}_{i\in\mathbb{N}}$, we know that $$d(f^i(f^kx),x_i)<\varepsilon\text{ for any }i\in\mathbb{N}.$$ Therefore, $(X,f)$ satisfies the shadowing property.
\end{proof}
If we remove the assumption $(3)$ in Theorem \ref{Theorem 6.1}, then we have a counterexample. More precisely, we have  
\begin{theorem}\label{Theorem 6.2}
	There exist a dynamical system $(X,f)$, $L\in\mathbb{N}^+$, and a subsystem $(Y,f^L)$ of $(X,f^L)$ such that
	\begin{enumerate}[(1)]
		\item $(Y,f^L)$ satisfies the shadowing property and the specification property;
		\item $X=\bigcup_{i=0}^{L-1}f^i(Y)$.
	\end{enumerate}
	However, $(X,f)$ does not satisfy the shadowing property or the modified almost specification property.
\end{theorem}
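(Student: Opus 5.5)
We construct an explicit example with $L=2$ in which $Y$ is a copy of a full shift and $f$ acts by swapping two overlapping copies so that the union fails both properties. The plan is to take $Y_0:=\{0,1\}^{\mathbb{N}}$ (or $\{0,1\}^{\mathbb{Z}}$), which has shadowing and specification for $\sigma$. Inside a bigger space we glue two copies $Y$ and $f(Y)$ that share exactly one point, and let $f$ alternate between them. Concretely, let $X:=(Y_0\times\{0\})\cup(Y_0\times\{1\})/\sim$, where we identify $(0^\infty,0)\sim(0^\infty,1)$. Define $f(y,0):=(y,1)$ and $f(y,1):=(\sigma y,0)$. This is well defined at the glued point, since $f(0^\infty,0)=(0^\infty,1)=(0^\infty,0)$ and $f(0^\infty,1)=(0^\infty,0)$. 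Then $f^2(y,0)=(\sigma y,0)$, so $Y:=Y_0\times\{0\}$ is $f^2$-invariant, $(Y,f^2)\cong(Y_0,\sigma)$ has shadowing and specification, and $X=Y\cup f(Y)$. Condition (3) of Theorem \ref{Theorem 6.1} fails exactly at the glued fixed point.

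\textbf{Failure of shadowing.} We exploit the fact that a pseudo-orbit may switch copies at the glued point without any error. For small $\delta$, take a pseudo-orbit that starts at a point of $Y$ near the fixed point $p=[0^\infty]$ and sits at $p$ for a while. It then jumps through $p$ so that its parity flips: it stays at $p$ and then leaves along a copy with the wrong parity, say it is at $(y,0)$ at an odd time. A true orbit starting in $Y_0\times\{0\}$ is in copy $0$ at even times and in copy $1$ at odd times, except when it is near $p$. Choose the pseudo-orbit to later follow $(1^\infty,0)$, which stays far from $p$, on the wrong parity. Any $\varepsilon$-tracing orbit with $\varepsilon$ small would then have to be close to copy $0$ at odd times while being far from $p$. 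This is impossible, because the two copies are at distance bounded below away from a neighbourhood of $p$, and the tracing orbit's parity is fixed once it is away from $p$. This parity obstruction is the key point.

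\textbf{Failure of modified almost specification.} The same parity invariant applies. The points $x_1:=(1^\infty,0)$ and $x_2:=(1^\infty,1)$ lie in the measure center $C_f(X)$, since they are periodic. For odd $n$, specification would give $z$ whose orbit follows $x_1$ on most of $[0,n)$ and follows $x_2$ on most of $[n,2n)$, with $g(n,\varepsilon)/n\to 0$. But $f^n x_2$ and $f^n x_1$ have opposite parities. Away from $p$, the orbit of $z$ keeps a fixed parity relative to time, and it cannot pass near $p$ while $\varepsilon$-shadowing a $1^\infty$ orbit except at the few mistake times, which is not enough to switch. The main obstacle is making this last step rigorous: we must show that a switch of copies requires visiting a small neighbourhood of $p$, and that such a visit forces many consecutive coordinates close to $0^\infty$, hence many mistake times incompatible with tracking $1^\infty$. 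We handle this by choosing the metric on $X$ so that the distance between the copies is comparable to the distance to $p$. Then following $1^\infty$ to within $\varepsilon<e^{-1}$ forbids being within $e^{-1}$ of $p$, and a switch would take about $\log(1/\varepsilon)$ steps near $0^\infty$, contradicting $g(n,\varepsilon)=o(n)$ once we choose the parity $n$ suitably and large. If the modified almost specification property restricts us to a fixed $n\geq k_g(\varepsilon)$, we use the fact that both even and odd $n$ are allowed, so odd $n$ is available.
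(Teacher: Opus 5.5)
Your wedge example is legitimate: two copies of the full shift glued at $p=[0^\infty]$, with $f[y,0]=[y,1]$ and $f[y,1]=[\sigma y,0]$ (writing $[y,c]$ for the class of $(y,c)$). It is also genuinely different from the paper's. The paper works inside $\{0,1,2\}^{\mathbb{Z}}$ with $X=\mathcal{E}\cup\sigma(\mathcal{E})$, built from the blocks $00,01,10,02$. It refutes shadowing through Walters' criterion (shadowing iff finite type) and Lemma 6.3, and refutes modified almost specification through the parity of the positions of the symbol $2$. Your parity invariant is the same obstruction in a cleaner form: $Y\cap f(Y)=\{p\}$, and shadowing can be refuted by an explicit pseudo-orbit. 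The paper's version keeps $f$ a shift map so that the SFT machinery applies. However, both of your failure arguments break as written.

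\textbf{Specification step.} With $x_1=[1^\infty,0]$, $x_2=[1^\infty,1]$ and $n$ odd there is no contradiction at all. Indeed $f^nx_1=x_2$, so $z=x_1$ already lies in $B_n(f;g(n,\varepsilon);x_1,\varepsilon)\cap f^{-n}B_n(f;g(n,\varepsilon);x_2,\varepsilon)$. The relevant comparison is $x_2$ against $f^nx_1$, not $f^nx_2$ against $f^nx_1$, so you need $n$ even.

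\textbf{Shadowing step.} Your pseudo-orbit starts near $p$, so nothing pins the parity of a tracing orbit. For example, the true orbit of $[0^{K+M+1}1^\infty,1]$ traces, up to the small initial error, the pseudo-orbit that stays near $p$ up to time $2K$ and follows $[0^M1^\infty,0]$ from time $2K+1$ on. You need an initial segment far from $p$. In the one-sided version, take $[10^\infty,0]$, $[10^\infty,1]$, $p$, then an $e^{-M}$-jump to $[0^M1^\infty,0]$. For $\varepsilon<1$ the time-$0$ constraint forces the tracing point into copy $0$. At the odd time when the pseudo-orbit reaches $[1^\infty,0]$, the tracing orbit is in copy $1$ or at $p$, at distance at least $1$.

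\textbf{The ``main obstacle'' does not exist.} A true orbit never changes copies: $f^t[w,c]$ lies in copy $(t+c)\bmod 2$ for every $t$, or equals $p$ from some time on. Use the wedge metric $\rho([y,i],[y',j])=d(y,y')$ if $i=j$ and $d(y,0^\infty)+d(y',0^\infty)$ if $i\neq j$. In it, $[1^\infty,j]$ is at distance $\ge 1$ from copy $1-j$ and from $p$. A single good time in each mistake ball exists because $\Lambda\neq\emptyset$. Each such time determines $c$, and for even $n$ the two blocks give contradictory values. Your proposed resolution would not have worked anyway: a switch costing about $\log(1/\varepsilon)$ bad times is compatible with $g(n,\varepsilon)=o(n)$, since $g$ may be unbounded in $n$.

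With these three corrections, your example proves the statement.
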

We will construct the counterexample in a two-sided full shift. We first recall some basic definitions and facts for two-sided full shifts. Let $\mathcal{A}$ be a finite alphabet with $|\mathcal{A}|\geq2$ and let $Y$ be a non-empty compact subset of $\mathcal{A}^{\mathbb{Z}}$, then $(Y,\sigma)$ is said to be a \emph{(two-sided) subshift} of $(\mathcal{A}^{\mathbb{Z}},\sigma)$ if $\sigma(Y)=Y$. Given a subset $\mathcal{F}\subset \mathcal{A}^\ast$, we can define the set $$X=X(\mathcal{F}):=\{x\in \mathcal{A}^{\mathbb{Z}}:x_ix_{i+1}\cdots x_j\notin\mathcal{F},\text{for any integers }i\leq j\},$$ then $(X,\sigma)$ is a subshift of $(\mathcal{A}^{\mathbb{Z}},\sigma)$ and $\mathcal{F}$ is called the \emph{forbidden words} of $(X,\sigma)$. We say that $(X,\sigma)$ is a \emph{(two-sided) subshift of finite type} if $|\mathcal{F}|<\infty$, if further $|\mathcal{F}|\leq n$ for some $n\in\mathbb{N}$, then $(X,\sigma)$ is said to be a subshift of finite type of order $n$. A subshift satisfies the shadowing property if and only if it is of finite type \cite[Theorem 1]{Walters2}. 
\begin{lemma}\cite[Proposition 17.6]{DGS}\label{Lemma 6.3}
	Suppose that $(X,\sigma)$ is a subshift of $(\mathcal{A}^{\mathbb{Z}},\sigma)$. Then it is of finite type of order $n$ for some $n\in\mathbb{N}^+$ if and only if for every pair $x,y\in X$ and every $k\in\mathbb{Z}$ such that $x_i=y_i$ for any $k\leq i\leq k+n-2$, the point $z\in\mathcal{A}^{\mathbb{Z}}$ defined by 
	\[z_i:=
	\begin{cases}
		x_i & \text{if }i\leq k+n-2,\\
		y_i & \text{if }i\geq k.
	\end{cases}\] belongs to $X$.
\end{lemma}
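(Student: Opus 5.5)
We read ``finite type of order $n$'' in the sense intended by \cite[Proposition 17.6]{DGS}: $X=X(\mathcal F)$ for some set $\mathcal F$ of forbidden words of length at most $n$. The plan is to prove the two directions separately. Throughout, a word $u\in\mathcal A^\ast$ is \emph{allowed} if it occurs in some point of $X$. First I would normalize: any forbidden word of length $j<n$ can be replaced by all length-$n$ words containing it. This leaves $X(\mathcal F)$ unchanged, so I may assume every word in $\mathcal F$ has length exactly $n$.

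\emph{Necessity.} Assume $X=X(\mathcal F)$ with all words of $\mathcal F$ of length $n$. Let $x,y\in X$ agree on $[k,k+n-2]$, and define $z$ as in the statement; $z$ is well defined because $x$ and $y$ agree on the overlap. Take any length-$n$ window $[i,i+n-1]$ of $z$. Either $i+n-1\le k+n-2$, in which case $z$ on the window equals $x$ there, or $i+n-1\ge k+n-1$, i.e.\ $i\ge k$, in which case $z$ on the window equals $y$ there. In both cases the word $z_i\cdots z_{i+n-1}$ occurs in a point of $X$, so it is not in $\mathcal F$. Hence $z\in X(\mathcal F)=X$.

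\emph{Sufficiency.} Let $\mathcal F$ be the set of length-$n$ words that are not allowed. Clearly $X\subset X(\mathcal F)$, so it remains to show $X(\mathcal F)\subset X$. Fix $w\in X(\mathcal F)$; then every length-$n$ subword of $w$ is allowed. Since $X$ is closed, it suffices to find, for each $m\geq n$, a point of $X$ agreeing with $w$ on $[-m,m]$. I would build such points by an inductive extension using the gluing hypothesis.

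\begin{enumerate}[(1)]
\item Start from a point $u\in X$ with $u|_{[-m,-m+n-1]}=w|_{[-m,-m+n-1]}$. It exists because that subword is allowed and $X$ is shift-invariant ($\sigma(X)=X$).
\item Suppose $u\in X$ agrees with $w$ on $[-m,b]$, where $b-(-m)\ge n-1$. The word $w|_{[b-n+2,\,b+1]}$ is allowed, so by shift invariance there is $v\in X$ with $v|_{[b-n+2,\,b+1]}=w|_{[b-n+2,\,b+1]}$.
\item The points $u$ and $v$ agree on $[b-n+2,b]$, which has length $n-1$. Apply the hypothesis with $k=b-n+2$, $x=u$, $y=v$. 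The resulting $z\in X$ equals $u$ on $(-\infty,b]$ and $v$ on $[b-n+2,\infty)$. Hence $z$ agrees with $w$ on $[-m,b+1]$.
\end{enumerate}

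When $n=1$ the overlap is empty and the hypothesis is pure concatenation, so the same step works. Iterating step (3) up to $b=m$ gives the required point, and letting $m\to\infty$ with closedness of $X$ gives $w\in X$.

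The only delicate point is the bookkeeping of the overlap window $[k,k+n-2]$ against the window lengths. I must check that each length-$n$ window of the glued point lies entirely on one side, and that each extension step uses an overlap of exactly $n-1$ symbols. Both checks reduce to the inequality $i+n-1\ge k+n-1\Rightarrow i\ge k$, so I expect no real obstacle beyond this indexing.
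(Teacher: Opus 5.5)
Your proof is correct. The paper does not prove this lemma itself; it quotes it from \cite[Proposition 17.6]{DGS}. Your argument is the standard one behind that citation. For necessity, every length-$n$ window of the glued point lies inside $(-\infty,k+n-2]$ or inside $[k,\infty)$. For sufficiency, you take $\mathcal F$ to be the non-allowed $n$-words, extend one coordinate at a time by gluing along $(n-1)$-symbol overlaps, and conclude by closedness of $X$.

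Your reading of ``order $n$'' as ``forbidden words of length at most $n$'' is in fact forced. The paper's text literally defines order $n$ by $|\mathcal F|\le n$, and under that reading the forward implication already fails for $n=1$. Take $\mathcal F=\{0000\}$ over $\{0,1\}$, and glue a point whose block $00$ ends at coordinate $k-1$ to one whose block $00$ starts at coordinate $k$; the result contains $0000$. So the length-based reading is the one under which the lemma, and its use in the proof of Theorem \ref{Theorem 6.2}, is valid.
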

Now, we give the proof of Theorem \ref{Theorem 6.2}.
\begin{proof}[Proof of Theorem \ref{Theorem 6.2}]
	We consider the two-sided full shift $(\{0,1,2,3\}^\mathbb{Z},\sigma)$. We define $$\omega_1:=00, \omega_2:=01, \omega_3:=10, \omega_4:=02$$ and an alphabet $$\mathcal{A}:=\{\omega_1,\omega_2,\omega_3,\omega_4\}.$$ Then we can define a compact subset $\mathcal{E}$ of $\{0,1,2\}^\mathbb{Z}$ as $$\mathcal{E}:=\{a=\cdots a_{-1}a_0a_1\cdots:a_i\in\mathcal{A}\text{ for any }i\in\mathbb{Z}\}.$$ As a result, we have that $\sigma^2(a)_i=a_{i+1}$ for any $a\in\mathcal{E}$ and $i\in\mathbb{Z}$. It is clear that $$(\mathcal{E},\sigma^2)\text{ is topologically conjugate to }  (\{0,1,2,3\}^\mathbb{Z},\sigma)$$ and thus satisfies the shadowing property and the specification property.
	We define $$X:=\mathcal{E}\cup\sigma(\mathcal{E}).$$ Then $(X,\sigma)$ is a subshift of $(\{0,1,2\}^\mathbb{Z},\sigma)$. For every $x\in X$, all occurrences of the symbol $2$ in $x$
	have the same parity. Indeed, if $x\in \mathcal{E}$, then each block
	\[
	x_{2k}x_{2k+1}\in \{00,01,10,02\},
	\]
	so $2$ can appear only at odd coordinates; similarly, if $x\in \sigma(\mathcal{E})$, then $2$
	can appear only at even coordinates.

	Given $m\in\mathbb{N}^+$, we define $x,y\in X$ as follows:
	$$x:=(02)^\infty10(01)^m00(01)^\infty\text{ with }x_0x_1\cdots:=(01)^m00(01)^\infty$$ and $$y:=(02)^\infty00(10)^m(02)^\infty\text{ with }y_0y_1\cdots:=0(10)^m(02)^\infty=(01)^m00(20)^\infty.$$ Then $$x_i=y_i\text{ for any }0\leq i\leq (2m+3)-2.$$ We define $z\in\{0,1,2\}^\mathbb{Z}$ as follows:
	\[z_i:=
	\begin{cases}
		x_i & \text{if }i\leq(2m+3)-2,\\
		y_i & \text{if }i\geq 0.
	\end{cases}
	\]
	Then $$z=(02)^\infty10(01)^m00(20)^\infty\text{ with }z_0z_1\cdots=(01)^m00(20)^\infty.$$ As a result, $$z_{-3}=z_{2m+2}=2\text{ and }z_i\neq 2\text{ for any }-3<i<2m+2.$$ Since $2m+2-(-3)=2(m+2)+1$ is an odd number, we conclude that $z\notin X$. By Lemma \ref{Lemma 6.3}, we have that $(X,\sigma)$ is not a subshift of finite type of order $(2m+3)$. By the arbitrariness of $m\in\mathbb{N}^+$, we conclude that $(X,\sigma)$ is not a subshift of finite type. As a result, $(X,f):=(X,\sigma)$ does not satisfy the shadowing property.
	
	Finally, we show that $(X,\sigma)$ does not satisfy the modified almost
	specification property. Let
	\[
	\alpha:=(02)^\infty \in \mathcal{E} \subset X\text{ with }\alpha_0=0\text{ and }
	\beta:=(20)^\infty=\sigma(\alpha)\in \sigma(\mathcal{E})\subset X.
	\]
	
	Suppose that $(X,\sigma)$ satisfies the modified almost
	specification property with mistake function $g$ and a function $k_g$. Fix $0<\varepsilon<1$. Since
	$g(n,\varepsilon)/n\to 0$ as $n\to\infty$, we can choose $n$ large enough such that
	\[
	2n\ge k_g(\varepsilon)\text{ and } g(2n,\varepsilon)<n.
	\]
	By Definition \ref{Definition 2.3}, there exists
	$z\in X$ such that
	\[
	z\in B_{2n}(\sigma;g(2n,\varepsilon);\alpha,\varepsilon)
	\]
	and
	\[
	\sigma^{2n}z\in B_{2n}(\sigma;g(2n,\varepsilon);\beta,\varepsilon).
	\]
	Since $\varepsilon<1$, for any $z',z''\in X$ with $d(z',z'')<\varepsilon$, we have that $z'_0=z''_0$. As a result, we conclude that the Hamming distance between
	$z_0z_1\cdots z_{2n-1}$ and
	\[
	a_n:=(02)^n
	\]
	is at most $g(2n,\varepsilon)$, and the Hamming distance between
	$z_{2n}z_{2n+1}\cdots z_{4n-1}$ and
	\[
	b_n:=(20)^n
	\]
	is also at most $g(2n,\varepsilon)$.
	
	However, in the concatenated word $a_nb_n$ of length $4n$, the symbol $2$ appears in
	$a_n$ exactly at the odd coordinates
	\[
	1,3,\dots,2n-1,
	\]
	and in the $b_n$-part exactly at the even coordinates
	\[
	2n,2n+2,\dots,4n-2.
	\] Since every admissible word of $X$ can contain
	the symbol $2$ only on one parity class, any admissible word of length $4n$ must
	differ from $a_n$ in at least $n$ coordinates or differ from $b_n$ in at least $n$
	coordinates. This contradicts the fact that both Hamming distances are at most
	$g(2n,\varepsilon)<n$. Therefore, $(X,\sigma)$ does not satisfy the modified almost
	specification property.
\end{proof}

\section{Applications}\label{Section 8}
In this section, we present further applications of our main results and prove Corollary \ref{Corollary B}.
\subsection{Dynamical systems having a semi-horseshoe}
Note that $(\{0,1\}^{\mathbb{N}},\sigma)$ satisfies the shadowing property. Hence, we can use Theorem \ref{Theorem B} to strengthen Theorem \ref{Theorem A} as follows.
\begin{theorem}\label{new-theorem8.1}
	If $(X,f)$ has a semi-horseshoe, then for any non-trivial weight $\vartheta\in \ell^\infty(\mathbb{N})$, we have $h_{\mathrm{top}}(f,\mathcal{N}_\vartheta(f,X))>0$. 
\end{theorem}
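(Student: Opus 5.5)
Following the pruning argument of Section \ref{Section 3}, I will first reduce to a semi-horseshoe with disjoint iterates, and then apply Theorem \ref{Theorem B} to the shift factor and pull the positive-entropy set back.

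\emph{Step 1: disjoint iterates.} By Lemma \ref{new-lemma3.4} we may assume $(X,f)$ has a one-sided $N$-order semi-horseshoe. The proof of Theorem \ref{Theorem A} runs the pruning algorithm; by Lemmas \ref{new-lemma3.9}--\ref{new-lemma3.11} it stops at an admissible $B_\ast$. Lemma \ref{new-lemma3.8} applied to $B_\ast$ then gives $k:=\tau L\in\mathbb{N}^+$ and a compact set $E$ with $f^k(E)\subset E$, $E\cap f^i(E)=\emptyset$ for $1\le i\le k-1$, and a factor $\rho:(E,f^k)\to(\Theta,\sigma^L)$. Here $(\Theta,\sigma^L)$ is conjugate to $(\{0,1\}^{\mathbb{N}},\sigma)$. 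As in Claim 5 of Theorem \ref{Theorem 4.1}, the condition $E\cap f^r(E)=\emptyset$ for $1\le r\le k-1$, together with $f^k(E)\subset E$, gives $f^i(E)\cap f^j(E)=\emptyset$ for all $0\le i<j\le k-1$.

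\emph{Step 2: entropy in the symbolic factor.} The full shift $(\{0,1\}^{\mathbb{N}},\sigma)$ has the shadowing property and entropy $\log 2>0$. Applying Theorem \ref{Theorem B} with $L=1$ gives
\[
h_{\mathrm{top}}\bigl(\sigma,\mathcal{N}_\varpi(\sigma,\{0,1\}^{\mathbb{N}})\bigr)=\log 2
\]
for every non-trivial weight $\varpi$. The conjugacy preserves both the correlated sets and Bowen entropy. Lemma \ref{Lemma 4.3}, applied to the factor $\rho$, then yields
\[
h_{\mathrm{top}}\bigl(f^k,\mathcal{N}_\varpi(f^k,E)\bigr)\ge\log 2 .
\]

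\emph{Step 3: transfer to $f$.} Fix a non-trivial weight $\vartheta$. Lemma \ref{Lemma 4.2} provides $0\le\tau'\le k-1$ and a non-trivial weight $\varpi$ such that
\[
f^{k-\tau'}\bigl(\mathcal{N}_\varpi(f^k,E)\bigr)\subset\mathcal{N}_\vartheta(f,X).
\]
By Lemma \ref{Lemma 2.2}(1) and (3), exactly as in \eqref{equ (4.15)},
\[
h_{\mathrm{top}}\bigl(f,\mathcal{N}_\vartheta(f,X)\bigr)\ge\tfrac1k\, h_{\mathrm{top}}\bigl(f^k,\mathcal{N}_\varpi(f^k,E)\bigr)\ge\tfrac{\log 2}{k}>0 .
\]

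\emph{Main obstacle.} Step 1 is the only substantive point: the disjointness of the iterates $f^i(E)$ has to be extracted from the proof of Lemma \ref{new-lemma3.8}, not just its Bohr-chaos conclusion. There is also a technical point in Lemma \ref{Lemma 4.2}: the entropy of $\mathcal{N}_\varpi(f^k,E)$ is computed inside $E$, while Lemma \ref{Lemma 2.2} is applied in $X$. Bowen entropy of a subset does not depend on the ambient compact space, since the Bowen balls restrict to it, so this causes no difficulty.
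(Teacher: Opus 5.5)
Your proposal is correct and follows the paper's proof essentially step for step. You prune to $B_\ast$, apply Lemma~\ref{new-lemma3.8} to get $(E,f^{k})$ with $k=\tau L$, and obtain pairwise disjoint iterates by the Claim~5 argument; you then chain Lemma~\ref{Lemma 4.2}, Lemma~\ref{Lemma 2.2}, Lemma~\ref{Lemma 4.3} and the full-shift entropy statement to reach $\frac{1}{k}\log 2$. Two small remarks: citing Theorem~\ref{Theorem B} (with $L=1$) for $h_{\mathrm{top}}(\sigma,\mathcal{N}_\varpi(\sigma,\{0,1\}^{\mathbb{N}}))=\log 2$ is the right reference (the paper's text cites Theorem~\ref{Theorem A} at that step, which only gives Bohr chaoticity), and $E\cap f^i(E)=\emptyset$ is already part of the statement of Lemma~\ref{new-lemma3.8}, so nothing needs to be extracted from its proof.
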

\begin{proof}
	By Lemma \ref{new-lemma3.4}, we can always assume that $(X,f)$ has a one-sided $N$-order semi-horseshoe $(\Lambda,f^N)$ for some $N\in\mathbb{N}^+$. We define $M$ by equation \eqref{equ-3.1} and $\tau=NM$. Then by the proof of Theorem \ref{Theorem A}, there exists an admissible set $B_\ast\subset B_0$ such that
	\[
	Y_q(B_\ast)\subsetneq \{0,1\}^{\mathbb{N}}\text{ for any }q\in Q.
	\]
	By using Lemma \ref{new-lemma3.8} for $B_\ast$, there exists $L\in\mathbb{N}^+$ such that $(X,f)$ has a one-sided $\tau L$-order semi-horseshoe $(E,f^{\tau L})$ such that $E\subset B_\ast$ and $$E\cap f^i(E)=\emptyset\text{ for any }1\leq i\leq \tau L-1.$$ 
	
	Denote $k=\tau L$, we claim that $f^i(E)\cap f^j(E)=\emptyset$ for any $0\leq i<j\leq k-1$. Indeed, if $f^i(E)\cap f^j(E)\neq\emptyset$ for some $0\leq i<j\leq k-1$, denote $r=j-i$, then $$\emptyset\neq f^{k-i}(f^{i}(E)\cap f^{j}(E))\subset f^k(E)\cap f^{j+k-i}(E)\subset E\cap f^{j-i}(E)=E\cap f^r(E).$$

	Now, given a non-trivial weight $\vartheta\in \ell^\infty(\mathbb{N})$, since $f^i(E)\cap f^j(E)=\emptyset$ for any $0\leq i<j\leq k-1$, by Lemma \ref{Lemma 4.2}, there exist $0\leq s\leq k-1$ and a non-trivial weight $\varpi\in \ell^\infty(\mathbb{N})$ such that $$f^{k-s}(\mathcal{N}_\varpi(f^k,E))\subset\mathcal{N}_\vartheta(f,X).$$ Hence, by Lemma \ref{Lemma 2.2}, we have
	\begin{multline}\label{equ (8.15)}
		h_{\mathrm{top}}(f,\mathcal{N}_\vartheta(f,X))\geq h_{\mathrm{top}}(f,f^{k-s}(\mathcal{N}_\varpi(f^k,E)))\\=h_{\mathrm{top}}(f,\mathcal{N}_\varpi(f^k,E))=\frac{1}{k}h_{\mathrm{top}}(f^k,\mathcal{N}_\varpi(f^k,E)).
	\end{multline}
	By Lemma \ref{Lemma 4.3}, we have
	\begin{equation}\label{equ (8.16)}
		h_{\mathrm{top}}(f^k,\mathcal{N}_{\varpi}(f^k,E))\geq h_{\mathrm{top}}(\sigma,\mathcal{N}_{\varpi}(\sigma,\{0,1\}^{\mathbb{N}})).
	\end{equation}
	By Theorem \ref{Theorem A}, we have that
	\begin{equation}\label{equ (8.17)}
		h_{\mathrm{top}}(\sigma,\mathcal{N}_{\varpi}(\sigma,\{0,1\}^{\mathbb{N}}))=\log 2.
	\end{equation}
	Combining equations (\ref{equ (8.15)})-(\ref{equ (8.17)}), we have that $$h_{\mathrm{top}}(f,\mathcal{N}_\vartheta(f,X))\geq\frac{1}{k}\log 2>0.$$
\end{proof}
\subsection{Dynamical systems having a horseshoe}
Note that $(\{0,1\}^{\mathbb{Z}},\sigma)$ also satisfies the shadowing property. For dynamical systems having an $N$-order horseshoe, we can strengthen \cite[Theorem 1.1]{Fan-Fan-Ryzhikov-Shen2022} as follows.

\begin{theorem}
	Suppose that $(X,f)$ has an $N$-order horseshoe. Then for any non-trivial weight $\vartheta\in \ell^\infty(\mathbb{N})$, we have
	\[
	h_{\mathrm{top}}(f,\mathcal{N}_\vartheta(f,X))\ge \frac{1}{N}\log 2.
	\]
\end{theorem}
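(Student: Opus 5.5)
The plan is to apply Theorem \ref{Theorem B} to the horseshoe itself, with $L=N$, and then push the full-entropy correlated set of $f$ restricted to the horseshoe into $X$. Let $(\Lambda,f^N)$ be the $N$-order horseshoe with conjugacy $\pi:\Lambda\to\{0,1\}^{\mathbb N}$ (or $\{0,1\}^{\mathbb Z}$). Put
\[
X':=\bigcup_{i=0}^{N-1}f^i(\Lambda).
\]
This set is compact, and $f(X')\subset X'$ because $f^N(\Lambda)\subset\Lambda$. So $(X',f)$ is a subsystem of $(X,f)$, and $(\Lambda,f^N)$ is a subsystem of $(X',f^N)$ whose $N$ iterates cover $X'$.

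Next I check the hypotheses of Theorem \ref{Theorem B} for $(X',f)$ with $Y=\Lambda$ and $L=N$. Since $\pi$ is a conjugacy onto a full shift, $(\Lambda,f^N)$ has the shadowing property: shadowing is a conjugacy invariant on compact spaces, and both full shifts have it, as noted just before this statement. Its entropy is $h_{\mathrm{top}}(f^N,\Lambda)=\log 2>0$. Theorem \ref{Theorem B} then gives, for every non-trivial weight $\vartheta$,
\[
h_{\mathrm{top}}(f,\mathcal N_\vartheta(f,X'))=\frac1N h_{\mathrm{top}}(f^N,\Lambda)=\frac1N\log 2.
\]

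The remaining step is to transfer from $X'$ to $X$. Take $x\in\mathcal N_\vartheta(f,X')$ and a witness $\varphi\in C(X')$. By the Tietze extension theorem, $\varphi$ extends to $\varphi^*\in C(X)$. The orbit of $x$ stays in $X'$, so the weighted averages along it are unchanged, and hence $\mathcal N_\vartheta(f,X')\subset\mathcal N_\vartheta(f,X)$. Bowen entropy of a subset is intrinsic: it is computed with Bowen balls and the metric on $X$ restricted to $X'$, and restricting the covering balls to $X'$ at most halves the radius, which does not change the limit. Therefore
\[
h_{\mathrm{top}}(f,\mathcal N_\vartheta(f,X))\ge h_{\mathrm{top}}(f,\mathcal N_\vartheta(f,X'))=\frac1N\log 2.
\]

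The main point needing care is the legitimacy of applying Theorem \ref{Theorem B}. In the two-sided case, the subsystem condition $f^N(\Lambda)\subset\Lambda$ holds automatically. The shadowing transfer via the uniformly continuous conjugacy is routine. Note that Theorem \ref{Theorem B} does not require the pieces $f^i(\Lambda)$ to be disjoint, which is exactly why this route works without first passing to disjoint steps.
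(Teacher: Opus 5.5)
Your proposal is correct and is essentially the argument the paper intends. The paper gives no separate proof: it only notes that full shifts have the shadowing property, so that Theorem \ref{Theorem B} applies to $\bigl(\bigcup_{i=0}^{N-1}f^i(\Lambda),f\bigr)$ with $Y=\Lambda$ and $L=N$. Your Tietze-extension and ambient-independence steps make explicit the passage from this subsystem to $X$, which the paper leaves implicit. One small wording fix: a Bowen ball centred in $X$ that meets $X'$ lies inside the Bowen ball of twice the radius centred at a point of $X'$, so the radius doubles rather than halves; this is harmless as $\sigma\to 0$.
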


\subsection{Smooth dynamics}

In smooth dynamics, a \emph{hyperbolic horseshoe} is a topologically transitive, locally maximal hyperbolic set that is totally disconnected and infinite. Every hyperbolic horseshoe is expansive \cite[Corollary 6.4.10]{Katok-Hasseblatt-1995} and satisfies the shadowing property \cite[Theorem 18.1.2]{Katok-Hasseblatt-1995}. Hence, by an expansive version of \cite[Theorem 4.7]{Li-Oprocha-2013}, every smooth dynamical system having a hyperbolic horseshoe also has a horseshoe, and thus \cite[Theorem 1.1]{Fan-Fan-Ryzhikov-Shen2022} already implies Bohr chaoticity in this setting. Our results yield more precise information on the topological entropy of the set $\mathcal{N}_\vartheta(f,M)$.

Recall that an invariant measure is said to be \emph{hyperbolic} if its Lyapunov exponents are non-zero for almost every point. Let $M$ be a compact connected boundaryless smooth manifold with $\operatorname{dim}(M)\ge 1$, and let $f$ be a $C^{1+\alpha}$ diffeomorphism on $M$ with $\alpha>0$.

\begin{proof}[Proof of Corollary \ref{Corollary B}]
	Suppose that $\mu\in\mathcal{M}_f^e(M)$ is hyperbolic and satisfies $h_\mu(f)>0$. By Katok's classical approximation theorem for hyperbolic ergodic measures with positive metric entropy \cite{Katok-1980}; see also \cite[Theorem 3.3]{Avila-Crovisier-Wilkinson-2021}, we have
	\begin{equation}\label{equ (1.1)}
		h_\mu(f)\le \sup\bigl\{h_{\mathrm{top}}(f,\Lambda):(\Lambda,f)\text{ is a hyperbolic horseshoe of }(M,f)\bigr\}.
	\end{equation}
	
	If, in addition, $\operatorname{dim}(M)=2$ and $h_{\mathrm{top}}(f,M)>0$, then by Ruelle's inequality \cite{Ruelle-1978}, every $\mu\in\mathcal{M}_f^e(M)$ with $h_\mu(f)>0$ is hyperbolic. Hence
	\[
	h_{\mathrm{top}}(f,M)
	=
	\sup\bigl\{h_\mu(f):\mu\in\mathcal{M}_f^e(M)\text{ is hyperbolic and }h_\mu(f)>0\bigr\}.
	\]
	Therefore, Theorem \ref{Theorem B} implies Corollary \ref{Corollary B}.
\end{proof}

We now consider the $C^1$ case. Suppose that $f$ is a $C^1$ diffeomorphism on $M$ and $\Lambda$ is an $f$-invariant set. For two $Df$-invariant bundles $E,F\subset TM|_\Lambda$, we say that $F$ is \emph{dominated} by $E$ if there exist constants $C>0$ and $\lambda\in(0,1)$ such that for any $x\in\Lambda$ and any $n\in\mathbb{N}$,
\[
\|Df^n|_{F(x)}\|\cdot \|Df^{-n}|_{E(f^n x)}\|\le C\lambda^n.
\]
In this case, we write $F\oplus_\prec E$. A $f$-invariant set $\Lambda$ is said to admit a \emph{dominated splitting} if there exists a non-trivial $Df$-invariant splitting
\[
TM|_\Lambda=F\oplus_\prec E.
\]

\begin{definition}\label{Definition 2.9}
	Suppose that $\mu\in\mathcal{M}_f^e(M)$ is hyperbolic, has a positive Lyapunov exponent and a negative Lyapunov exponent. Let
	\[
	E_1\oplus\cdots\oplus E_l\oplus E_{l+1}\oplus\cdots\oplus E_{l+s}
	\]
	be its Oseledets splitting with corresponding Lyapunov exponents
	\[
	\lambda_1<\cdots<\lambda_l<0<\lambda_{l+1}<\cdots<\lambda_{l+s},
	\]
	defined for $\mu$-a.e. $x\in M$. Denote
	\[
	E^-=E_1\oplus\cdots\oplus E_l
	\qquad\text{and}\qquad
	E^+=E_{l+1}\oplus\cdots\oplus E_{l+s}.
	\]
	We say that $\mu$ \emph{admits a dominated splitting corresponding to the stable/unstable subspaces of its Oseledets splitting} if its support $S_\mu$ admits a dominated splitting $F\oplus_\prec E$ such that $F(x)=E^-(x)$ and $E(x)=E^+(x)$ for $\mu$-a.e. $x\in S_\mu$.
\end{definition}

Now suppose that $\mu\in\mathcal{M}_f^e(M)$ is hyperbolic with $h_\mu(f)>0$. By \cite[Theorem 3.17]{ST2015} or \cite[Theorem 1]{Gelfert-2016,Gelfert-2022}, if $\mu$ admits a dominated splitting corresponding to the stable/unstable subspaces of its Oseledets splitting, then $\mu$ can still be approximated by hyperbolic horseshoes, and hence the estimate \eqref{equ (1.1)} remains valid.

Let $\mathrm{Diff}_{\mathrm{vol}}^1(M)$ denote the space of $C^1$ volume-preserving diffeomorphisms on $M$, and let $m_\star$ be the normalized volume measure. Avila, Crovisier and Wilkinson \cite[Theorem B]{ACW} proved that there exists a residual subset $\mathcal{R}_\star$ of $\mathrm{Diff}_{\mathrm{vol}}^1(M)$ such that for any $f\in\mathcal{R}_\star$ with $h_{m_\star}(f)>0$, the map $f$ is non-uniformly Anosov and hence $m_\star$ admits a dominated splitting corresponding to the stable/unstable subspaces of its Oseledets splitting. Therefore, Theorem \ref{Theorem B} yields the following consequence.

\begin{theorem}
	Suppose that $f$ is a $C^1$ diffeomorphism on a compact connected boundaryless smooth manifold $M$, and that $\mu\in\mathcal{M}_f^e(M)$ is hyperbolic with $h_\mu(f)>0$ and admits a dominated splitting corresponding to the stable/unstable subspaces of its Oseledets splitting. Then for any non-trivial weight $\vartheta\in \ell^\infty(\mathbb{N})$, we have
	\[
	h_{\mathrm{top}}(f,\mathcal{N}_\vartheta(f,M))\ge h_\mu(f)>0.
	\]
	As a consequence, there exists a residual subset $\mathcal{R}_\star$ of $\mathrm{Diff}_{\mathrm{vol}}^1(M)$ such that for any $g\in\mathcal{R}_\star$ with $h_{m_\star}(g)>0$ and any non-trivial weight $\vartheta\in \ell^\infty(\mathbb{N})$,
	\[
	h_{\mathrm{top}}(g,\mathcal{N}_\vartheta(g,M))\ge h_{m_\star}(g)>0.
	\]
\end{theorem}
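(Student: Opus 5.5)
The plan is to reduce the first assertion to Theorem \ref{Theorem B} via horseshoes, exactly as in the proof of Corollary \ref{Corollary B}. Fix a non-trivial weight $\vartheta$ and $\eta<h_\mu(f)$.

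\textbf{Step 1: a horseshoe of large entropy.} Since $\mu$ is hyperbolic with $h_\mu(f)>0$ and admits the required dominated splitting, the cited approximation results (\cite[Theorem 3.17]{ST2015}, \cite{Gelfert-2016,Gelfert-2022}) give the $C^1$ analogue of \eqref{equ (1.1)}. So there is a hyperbolic horseshoe $\Lambda$ with $h_{\mathrm{top}}(f,\Lambda)>\eta$.

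\textbf{Step 2: cyclic decomposition.} The set $\Lambda$ is transitive and locally maximal, so spectral decomposition applies to $f|_\Lambda$. It gives $L\in\mathbb{N}^+$ and a compact $Y\subset\Lambda$ with $f^L(Y)=Y$ and $\Lambda=\bigcup_{i=0}^{L-1}f^i(Y)$, such that $Y$ is a locally maximal hyperbolic set for $f^L$. Hyperbolic sets of this kind have the shadowing property \cite[Theorem 18.1.2]{Katok-Hasseblatt-1995}, so $(Y,f^L)$ has shadowing. Alternatively one can take $L=1$, $Y=\Lambda$, since shadowing for $(\Lambda,f)$ is already available. Moreover
\[
h_{\mathrm{top}}(f^L,Y)=L\,h_{\mathrm{top}}(f,\Lambda)>0.
\]

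\textbf{Step 3: apply Theorem \ref{Theorem B} and transfer to $M$.} Theorem \ref{Theorem B} applied to $(\Lambda,f)$ gives
\[
h_{\mathrm{top}}(f,\mathcal N_\vartheta(f,\Lambda))=h_{\mathrm{top}}(f,\Lambda)>\eta.
\]
To pass to $M$, take $x\in\mathcal N_\vartheta(f,\Lambda)$ with witness $\varphi\in C(\Lambda)$. By the Tietze extension theorem, $\varphi$ extends to a function in $C(M)$, and since the orbit of $x$ stays in $\Lambda$, the same $\limsup$ holds for the extension. Hence $\mathcal N_\vartheta(f,\Lambda)\subset\mathcal N_\vartheta(f,M)$. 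Monotonicity of Bowen entropy then gives $h_{\mathrm{top}}(f,\mathcal N_\vartheta(f,M))>\eta$, and letting $\eta\uparrow h_\mu(f)$ proves the first claim.

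\textbf{Step 4: the generic statement.} For $g\in\mathcal R_\star$ with $h_{m_\star}(g)>0$, \cite[Theorem B]{ACW} shows that $g$ is non-uniformly Anosov. This makes $m_\star$ hyperbolic with a dominated splitting matching its stable and unstable Oseledets subspaces. Ergodicity of $m_\star$ is not guaranteed, so I would apply the first part to an ergodic component. Write $m_\star=\int\mu_\xi\,d\xi$; each component inherits hyperbolicity. The dominated splitting lives on a set containing $S_{m_\star}\supset S_{\mu_\xi}$, and it coincides with $E^\pm$ almost everywhere, so each component inherits it too. Since $h_{m_\star}(g)=\int h_{\mu_\xi}(g)\,d\xi$, components with entropy arbitrarily close to $\sup_\xi h_{\mu_\xi}(g)\ge h_{m_\star}(g)$ exist. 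This yields the bound $h_{\mathrm{top}}(g,\mathcal N_\vartheta(g,M))\ge h_{m_\star}(g)$.

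\textbf{Main obstacle.} The delicate point is Step 1 together with Step 4: making sure the cited $C^1$ horseshoe-approximation theorems apply exactly under the stated dominated-splitting hypothesis, and that this hypothesis passes to ergodic components in the generic case. Everything after that is a formal application of Theorem \ref{Theorem B}.
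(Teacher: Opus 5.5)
Your proposal is correct and follows the same route as the paper: the $C^1$ horseshoe-approximation results under the dominated-splitting hypothesis give the analogue of \eqref{equ (1.1)}, Theorem \ref{Theorem B} is applied to a hyperbolic horseshoe (which has shadowing, so $L=1$ already suffices), and the bound passes to $M$ via the inclusion $\mathcal N_\vartheta(f,\Lambda)\subset\mathcal N_\vartheta(f,M)$, which your Tietze argument spells out where the paper leaves it implicit. The ergodic-decomposition detour in Step 4 is valid but not needed, since the positive-entropy alternative of the \cite{ACW} dichotomy already makes $m_\star$ ergodic, and the paper uses this tacitly.
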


\subsection{$C^0$-generic conservative dynamics}

Let $M$ be a compact connected smooth Riemannian manifold with $\operatorname{dim}(M)\ge 2$. Let $\mu_*\in\mathcal{M}(M)$ be non-atomic (i.e. $\mu_*(\{x\})=0$ for any $x\in M$), zero on the boundary (i.e. $\mu_*(\partial M)=0$), and of full support (i.e. $S_{\mu_*}$ is $M$). We denote
\[
\mathcal{H}(M,\mu_*):=\{f\in\mathcal{H}(M):\mu_*\in\mathcal{M}_f(M)\}.
\]
Guih\'eneuf and Lefeuvre \cite[Corollary 1.4]{Guih\'eneuf-Lefeuvre-2018} showed that there exists a residual subset $\mathcal{R}'_*$ of $\mathcal{H}(M,\mu_*)$ such that every $f\in\mathcal{R}'_*$ satisfies the specification property. On the other hand, \cite[Theorem 3.17]{Guih\'eneuf-2012} implies that there exists a residual subset $\mathcal{R}''_*$ of $\mathcal{H}(M,\mu_*)$ such that every $f\in\mathcal{R}''_*$ has infinite topological entropy. Let
\[
\mathcal{R}_*=\mathcal{R}'_*\cap \mathcal{R}''_*.
\]
Then Theorem \ref{Theorem C} gives the following result.

\begin{theorem}
	Suppose that $M$ is a compact connected smooth manifold with $\operatorname{dim}(M)\ge 2$ and that $\mu_*\in\mathcal{M}(M)$ is non-atomic, zero on the boundary and of full support. Then there exists a residual subset $\mathcal{R}_*$ of $\mathcal{H}(M,\mu_*)$ such that for any $f\in\mathcal{R}_*$ and any non-trivial weight $\vartheta\in \ell^\infty(\mathbb{N})$,
	\[
	h_{\mathrm{top}}(f,\mathcal{N}_\vartheta(f,M))=h_{\mathrm{top}}(f,M)=\infty.
	\]
\end{theorem}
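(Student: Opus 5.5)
The plan is to reduce to Theorem \ref{Theorem C}, applied with $L=1$ and $Y=X=M$. That requires, for $f\in\mathcal{R}_*$, the modified almost specification property and positive entropy. Since $f\in\mathcal{R}'_*$, $(M,f)$ has the specification property. Figure \ref{fig-1} records that specification implies weak specification, and weak specification implies the modified almost specification property by Lemma \ref{Lemma 2.10-New}. Concretely, specification with gap $N(\varepsilon)$ gives weak specification with the mistake function $g(n,\varepsilon)=N(\varepsilon)$, which is constant in $n$ and hence monotone with $g(n,\varepsilon)/n\to 0$. Since $f\in\mathcal{R}''_*$, we have $h_{\mathrm{top}}(f,M)=\infty>0$. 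Condition (2) of Theorem \ref{Theorem C} is trivial for $L=1$.

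The main obstacle is that Theorem \ref{Theorem C} is stated with the equality $h_{\mathrm{top}}(f,\mathcal N_\vartheta)=h_{\mathrm{top}}(f,X)$, and its proof goes through Theorem \ref{Theorem 5.1}, which is phrased for arbitrary finite $0<\eta<h_{\mathrm{top}}(f,X)$. I will check that nothing in that argument uses finiteness of entropy. For each finite $\eta$ one picks an ergodic $\mu$ with $h_\mu(f)>\eta+3\tau$, which the variational principle provides even when $h_{\mathrm{top}}=\infty$. One then runs the marker construction and obtains a semi-horseshoe of entropy greater than $\eta$ with pairwise disjoint iterates.

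The proof of Theorem \ref{Theorem B}/\ref{Theorem C} then gives $h_{\mathrm{top}}(f,\mathcal N_\vartheta(f,M))>\eta$ through Lemmas \ref{Lemma 4.2}, \ref{Lemma 4.3} and \ref{Lemma A}. Letting $\eta\to\infty$ yields $h_{\mathrm{top}}(f,\mathcal N_\vartheta(f,M))=\infty=h_{\mathrm{top}}(f,M)$.

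Finally, I will note that $\mathcal{R}_*=\mathcal{R}'_*\cap\mathcal{R}''_*$ is residual as an intersection of two residual sets in $\mathcal{H}(M,\mu_*)$. This step needs $\mathcal{H}(M,\mu_*)$ to be a Baire space, which holds because it is a closed subset of the completely metrizable space $\mathcal{H}(M)$ under $d_{\mathcal H}$.
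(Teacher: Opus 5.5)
Your proposal is correct and follows the paper's argument: take $\mathcal R_*=\mathcal R'_*\cap\mathcal R''_*$, pass from specification to the modified almost specification property, and apply Theorem \ref{Theorem C} with $L=1$, $Y=M$. You also rightly verify two things the paper leaves implicit, that $\mathcal H(M,\mu_*)$ is Baire and that the argument tolerates $h_{\mathrm{top}}=\infty$, but your check of the latter is incomplete: besides the variational principle, the proof of Lemma \ref{Lemma C-New} uses finiteness of $h_\mu(f)$ through the choice $\delta<\frac12 h_\mu(f)$ and the Egoroff convergence of $\varphi_m$ to $h_\mu(f)$, so if your chosen ergodic measure has infinite entropy you should replace this with a uniform positive lower bound on $\varphi_m$ on a large set, which is a routine change.
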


\subsection{Other applications}

In this subsection, we apply the main results to the following commonly studied classes of dynamical systems:
\begin{enumerate}[(1)]
	\item topologically transitive continuous self-maps on a compact interval or on the circle with positive topological entropy;
	\item subshifts with positive topological entropy that are factors of an $S$-gap shift or a $\beta$-shift.
\end{enumerate}

Let $n\in\mathbb{N}^+$. A collection $\mathcal{D}=\{D_0,D_1,\cdots,D_{n-1}\}$ is said to be a \emph{regular periodic decomposition} of $(X,f)$ if the following conditions hold:
\begin{enumerate}[(1)]
	\item $D_i=\overline{\operatorname{int}(D_i)}$ for every $0\le i\le n-1$, where $\operatorname{int}(D_i)$ denotes the interior of $D_i$;
	\item $D_i\cap D_j$ is nowhere dense for any $0\le i<j\le n-1$;
	\item $f(D_i)\subset D_{i+1 \,(\mathrm{mod}\, n)}$ for every $0\le i\le n-1$;
	\item $\bigcup_{i=0}^{n-1}D_i=X$.
\end{enumerate}

\begin{lemma}\cite[Lemma 2.1]{Banks1997}\label{Lemma 7.1}
	Suppose that $(X,f)$ is topologically transitive and has a regular periodic decomposition $\mathcal{D}=\{D_0,D_1,\cdots,D_{n-1}\}$. Then $D_0=f^n(D_0)$ and $D_j=f^j(D_0)$ for every $0\le j\le n-1$.
\end{lemma}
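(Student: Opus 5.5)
Since $(X,f)$ is transitive, $D_0$ has nonempty interior and $f$ is continuous, I first prove $D_j=f^j(D_0)$, and then $D_0=f^n(D_0)$ follows from $D_n=D_0$ (indices mod $n$).

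\emph{Step 1: $f$ maps the interior of $D_i$ densely into $D_{i+1}$.} Set $U_i:=\operatorname{int}(D_i)$. By condition (3), $f(D_i)\subset D_{i+1}$. Hence $\bigcup_i f^k(U_i)$ lies in $\bigcup_i D_{i+k}$ in a compatible way. The key tool is transitivity. Pick a point $x$ with dense forward orbit; such a point exists, or I argue directly with open sets.

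Let $V\subset D_{i+1}$ be a nonempty open set. I claim $V$ meets $f(D_i)$. By transitivity there is $m$ with $U_i\cap f^{-m}(V)\neq\emptyset$. Since $f^m(U_i)\subset D_{i+m}$, the set $V\cap D_{i+m}$ has nonempty interior. By condition (2), the intersection of two distinct pieces is nowhere dense, so $i+m\equiv i+1 \pmod n$. Write $m=1+qn$. Then $f^m(U_i)=f(f^{qn}(U_i))\subset f(D_i)$, because $f^{qn}(D_i)\subset D_i$. Hence $V\cap f(D_i)\neq\emptyset$.

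\emph{Step 2: upgrade density to equality.} Step 1 shows that $f(D_i)$ is dense in $U_{i+1}$. Every open subset of $D_{i+1}$ meets $U_{i+1}$ because $D_{i+1}=\overline{U_{i+1}}$. Since $f(D_i)$ is compact, it is closed, so $f(D_i)\supset \overline{U_{i+1}}=D_{i+1}$. Combined with (3), this gives $f(D_i)=D_{i+1}$.

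One subtlety needs care in Step 1. I must make sure the open set I intersect, say $W\subset V$ with $W\cap D_{i+m}\neq\emptyset$, really forces $D_{i+1}\cap D_{i+m}$ to contain an open set. The natural route is to use $W:=V\cap f^{-m}$-preimage structure, namely the open set $U_i\cap f^{-m}(V)$. The problem is that its image is not open. So I will instead argue by contradiction. Suppose $i+m\not\equiv i+1$. Then $V\setminus D_{i+m}$ is open and dense in $V$, since $D_{i+1}\cap D_{i+m}$ is nowhere dense and $V\subset D_{i+1}$. Now replace $V$ by $V':=V\setminus D_{i+m}$ and apply transitivity again; the bad index may shift. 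To avoid an infinite regress, I will use a transitive point $x\in U_i$. Such a point exists because transitive points form a residual set in a space where transitivity holds (for compact metric $X$ without isolated points; the isolated-point case is a finite periodic orbit, handled directly). Its orbit visits $U_i$, then each $D_j$ in cyclic order, so $f^m(x)\in D_{i+m}$ for all $m$. Density of the orbit restricted to times $m\equiv 1$ modulo $n$ inside $V$ then follows. The times with $m\not\equiv1$ land in $\bigcup_{j\neq i+1}D_j$, and that union meets $V$ only in a nowhere dense set. So the dense orbit must visit $V$ at times $m\equiv 1$, giving $f^m(x)\in f(D_i)\cap V$.

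\emph{Step 3: conclude.} Iterating Step 2 gives $D_j=f^j(D_0)$ for $0\le j\le n-1$ and $f^n(D_0)=f(D_{n-1})=D_0$. I expect the main obstacle to be the orbit bookkeeping in Step 1, namely ensuring that the nowhere-density in (2) rules out the wrong residues. The transitive-point argument handles this.
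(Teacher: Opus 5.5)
The paper gives no proof of this lemma; it is quoted from Banks, so there is no in-paper argument to compare against. Your final argument, the transitive-point version of Step~1, is a correct self-contained proof. Take $x\in\operatorname{int}(D_i)$ with dense forward orbit, so $f^m(x)\in D_{i+m}$ for all $m$. For a nonempty open $V\subset D_{i+1}$, the set $V':=V\setminus\bigcup_{j\neq i+1}D_j$ is nonempty and open, because $V\cap\bigcup_{j\neq i+1}D_j$ lies in the finite union of the closed nowhere dense sets $D_{i+1}\cap D_j$. Every visit of the orbit to $V'$ occurs at a time $m\equiv 1\pmod n$, so $f^m(x)=f(f^{m-1}(x))\in f(D_i)\cap V$. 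Since $f(D_i)$ is compact and $D_{i+1}=\overline{\operatorname{int}(D_{i+1})}$, you get $f(D_i)=D_{i+1}$, and Step~3 is then immediate.

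\emph{Step~1 as first written.} It is invalid: $U_i\cap f^{-m}(V)\neq\emptyset$ only gives $V\cap D_{i+m}\neq\emptyset$, not that this set has interior. You noticed this, but the repair needs neither transitive points nor Baire category. The regress disappears if you delete all wrong pieces at once and apply transitivity a single time to $U_i$ and $V'$. Any $u\in U_i$ and $m\ge 0$ with $f^m(u)\in V'$ force $i+m\equiv i+1\pmod n$, hence $f^m(u)\in f(D_i)\cap V$.

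\emph{The isolated-points caveat.} With the paper's definition of transitivity (times in $\mathbb{N}$, including $0$), the Baire argument yields a residual set of points with dense forward orbit without any hypothesis on isolated points. So that caveat is unnecessary.

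\emph{The time $m=0$.} For $n\ge 2$ it is excluded automatically, since the starting point lies in $D_i$, which is among the removed pieces. For $n=1$, where the lemma just says $f(X)=X$, you need a visit at a positive time. One way is to also remove $x$ from $V$ when $x$ is not isolated. Your claim that a transitive system with an isolated point is a single periodic orbit is true, but it is only asserted; prove it if you rely on it for that case.
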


\begin{definition}\cite[Definition 37]{Kwietniak-Lacka-Oprocha-2016}
	A class $P$ of compact dynamical systems is called a \emph{property} if it is saturated with respect to topological conjugacy; that is, if $(X,f)\in P$ and $(Y,g)$ is topologically conjugate to $(X,f)$, then $(Y,g)\in P$. Let $P$ be a property of compact dynamical systems (e.g.\ topological transitivity, topological (weak) mixing, specification). A dynamical system $(X,f)$ is said to have \emph{property $P$ relative to a regular periodic decomposition} $\mathcal{D}=\{D_0,\cdots,D_{n-1}\}$ if $(D_i,f^n|_{D_i})$ has property $P$ for each $i\in\{0,\cdots,n-1\}$. We say that $(X,f)$ has the \emph{relative property $P$} if there exists a regular periodic decomposition $\mathcal{D}$ such that $(X,f)$ has property $P$ relative to $\mathcal{D}$.
\end{definition}

\begin{theorem}
	Suppose that $f$ is a topologically transitive continuous self-map on a compact interval $X$ with $h_{\mathrm{top}}(f,X)>0$. Then for any non-trivial weight $\vartheta\in \ell^\infty(\mathbb{N})$,
	\[
	h_{\mathrm{top}}(f,\mathcal{N}_\vartheta(f,X))=h_{\mathrm{top}}(f,X)>0.
	\]
	In particular, $(X,f)$ is Bohr chaotic.
\end{theorem}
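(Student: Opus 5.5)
The plan is to reduce the statement to Theorem \ref{Theorem C}, or to Theorem \ref{Theorem B} if that turns out to be easier, using the classical structure theory of transitive interval maps. The key input is the well-known fact that a transitive interval map with positive entropy is either topologically mixing, or admits a regular periodic decomposition $\{D_0,D_1\}$ into two subintervals with $f^2|_{D_i}$ topologically mixing. This is the Blokh/Banks dichotomy; for transitive interval maps, the periodic decomposition has length at most $2$. Moreover, by Blokh's theorem, a topologically mixing interval map has the specification property; see also \cite{Kwietniak-Lacka-Oprocha-2016}, where transitive interval maps are shown to have the relative specification property. So the first step is to quote this: there exist $L\in\{1,2\}$ and a regular periodic decomposition $\mathcal D=\{D_0,\dots,D_{L-1}\}$ such that $(D_0,f^L|_{D_0})$ has the specification property.

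Next, set $Y:=D_0$. By Lemma \ref{Lemma 7.1}, we have $f^L(Y)=Y$, so $(Y,f^L)$ is a subsystem of $(X,f^L)$, and $D_j=f^j(Y)$. Condition (4) of a regular periodic decomposition then gives $X=\bigcup_{i=0}^{L-1}f^i(Y)$, which is hypothesis (2) of Theorem \ref{Theorem C}. By Figure \ref{fig-1} (specification implies weak specification) and Lemma \ref{Lemma 2.10-New}, $(Y,f^L)$ satisfies the modified almost specification property.

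For positivity of entropy, note that $X=\bigcup_i f^i(Y)$. Lemma \ref{Lemma 2.2} gives $h_{\mathrm{top}}(f,X)=\max_i h_{\mathrm{top}}(f,f^i(Y))=h_{\mathrm{top}}(f,Y)=\frac1L h_{\mathrm{top}}(f^L,Y)$. Hence $h_{\mathrm{top}}(f^L,Y)=L\,h_{\mathrm{top}}(f,X)>0$, and hypothesis (1) holds. Theorem \ref{Theorem C} then yields $h_{\mathrm{top}}(f,\mathcal N_\vartheta(f,X))=h_{\mathrm{top}}(f,X)>0$ for every non-trivial weight $\vartheta$, and Bohr chaoticity follows.

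The main obstacle is only bibliographic and structural: justifying the relative specification decomposition for transitive interval maps. A transitive interval map is either mixing, or there is a fixed point $c$ with $D_0=[a,c]$ and $D_1=[c,b]$ swapped by $f$ and $f^2|_{D_i}$ mixing. Mixing interval maps have specification by Blokh, and a mixing interval map automatically has positive entropy, which is consistent with the hypothesis. I would cite \cite[Theorem 46 or the surrounding results]{Kwietniak-Lacka-Oprocha-2016} for the statement that transitive graph and interval maps have the relative specification property. Since that reference is already in the bibliography, the theorem reduces to a short chain of citations.
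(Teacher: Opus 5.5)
Your proposal is correct and follows essentially the same route as the paper. You take the relatively mixing regular periodic decomposition (the paper cites Banks, Theorem 7.2, rather than Kwietniak--{\L}{\c a}cka--Oprocha), and use Lemma \ref{Lemma 7.1} to get $X=\bigcup_{i=0}^{n-1}f^i(D_0)$. You then apply Blokh's theorem that mixing interval maps have specification, and conclude via Theorem \ref{Theorem C} using $h_{\mathrm{top}}(f^n,D_0)=n\,h_{\mathrm{top}}(f,X)>0$. Your extra observation that the decomposition has length at most $2$ is true but is not needed for the argument.
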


\begin{proof}
	By \cite[Theorem 7.2]{Banks1997} and its proof, $(X,f)$ is relatively topologically mixing with respect to a regular periodic decomposition
	\[
	\mathcal{D}=\{D_0,\cdots,D_{n-1}\},
	\]
	whose elements are compact intervals. Since $(X,f)$ is topologically transitive, Lemma \ref{Lemma 7.1} implies that
	\[
	D_i=f^i(D_0)\quad \text{for every }0\le i\le n-1,
	\qquad
	X=\bigcup_{i=0}^{n-1}f^i(D_0).
	\]
	Recall that every topologically mixing self-map on a compact interval has the specification property \cite{Blokh-1983}; see also \cite[Appendix A]{Buzzi-1997}. Therefore, $(D_0,f^n|_{D_0})$ satisfies the specification property. Since
	\[
	h_{\mathrm{top}}(f^n,D_0)=n\,h_{\mathrm{top}}(f,X)>0,
	\]
	Theorem \ref{Theorem C} completes the proof.
\end{proof}

\begin{theorem}
	Suppose that $f$ is a topologically transitive continuous self-map on the circle $S^1$ with $h_{\mathrm{top}}(f,S^1)>0$. Then for any non-trivial weight $\vartheta\in \ell^\infty(\mathbb{N})$,
	\[
	h_{\mathrm{top}}(f,\mathcal{N}_\vartheta(f,S^1))=h_{\mathrm{top}}(f,S^1)>0.
	\]
	In particular, $(S^1,f)$ is Bohr chaotic.
\end{theorem}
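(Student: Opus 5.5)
The plan is to mirror the interval case and reduce to Theorem \ref{Theorem C}. First I will reduce to a cyclic decomposition $S^1=\bigcup_{i=0}^{n-1}f^i(D_0)$ in which $(D_0,f^n|_{D_0})$ is topologically mixing. This should follow from the structure theory of transitive circle maps in \cite{Banks1997}, or from the classical results on transitive circle maps with positive entropy. Such a map is relatively topologically mixing with respect to a regular periodic decomposition $\mathcal D=\{D_0,\dots,D_{n-1}\}$. Either $n=1$ and $D_0=S^1$, or $n\ge 2$ and every $D_i$ is a compact arc. By Lemma \ref{Lemma 7.1}, $D_i=f^i(D_0)$ and $f^n(D_0)=D_0$, so hypothesis (2) of Theorem \ref{Theorem C} holds with $L=n$ and $Y=D_0$.

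Next I will show that $(D_0,f^n|_{D_0})$ has the specification property, and hence, by Figure \ref{fig-1} and Lemma \ref{Lemma 2.10-New}, the modified almost specification property. There are two cases.

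\begin{itemize}
\item[(a)] If $n\ge 2$, then $D_0$ is an interval and $f^n|_{D_0}$ is a topologically mixing interval map. Specification then follows from \cite{Blokh-1983} (see also \cite[Appendix A]{Buzzi-1997}), exactly as in the interval theorem.
\item[(b)] If $n=1$, then $f$ itself is a topologically mixing circle map. Here I will invoke Blokh's theorem for graph maps \cite{Blokh-1983}: a topologically mixing map of a topological graph, in particular the circle, has the specification property.
\end{itemize}

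Finally, by Lemma \ref{Lemma 2.2}, $h_{\mathrm{top}}(f^n,D_0)=n\,h_{\mathrm{top}}(f,S^1)>0$. Theorem \ref{Theorem C} then gives
\[
h_{\mathrm{top}}(f,\mathcal N_\vartheta(f,S^1))=h_{\mathrm{top}}(f,S^1)
\]
for every non-trivial weight $\vartheta$, and Bohr chaoticity follows.

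The main obstacle is the first step: stating correctly the decomposition theorem for transitive circle maps of positive entropy. This is also where the entropy assumption is used, since irrational rotations are transitive but have zero entropy and no such decomposition. I would rely on the known dichotomy for transitive circle maps: a transitive circle map either is conjugate to an irrational rotation (zero entropy) or has dense periodic points, and in the latter case Banks's theorem on regular periodic decompositions applies. That theorem holds for transitive maps with dense periodic points on spaces where a finite decomposition exists, and it yields components that are arcs or the whole circle. If the available reference only covers the relatively mixing decomposition without identifying the pieces as arcs, I would instead use directly Blokh's spectral decomposition for graph maps, which gives specification of $f^n$ on each cyclic piece.
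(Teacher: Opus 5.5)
Your proposal is correct and follows essentially the same route as the paper. Both arguments take Banks's regular periodic decomposition, use Lemma \ref{Lemma 7.1} to get $S^1=\bigcup_{i=0}^{n-1}f^i(D_0)$, apply Blokh's specification theorem to the mixing map $f^n|_{D_0}$, get the entropy identity from Lemma \ref{Lemma 2.2}, and conclude with Theorem \ref{Theorem C}. Your separate treatment of the case $n=1$ is a point the paper's proof passes over when it asserts that all the pieces are compact intervals: there $D_0=S^1$ is not an arc, so one needs the graph-map version of Blokh's result rather than the interval one.
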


\begin{proof}
	Since $h_{\mathrm{top}}(f,S^1)>0$, the system $(S^1,f)$ is not conjugate to an irrational rotation. By \cite[Theorem 7.4]{Banks1997} and its proof, $(S^1,f)$ is relatively topologically mixing with respect to a regular periodic decomposition
	\[
	\mathcal{D}=\{D_0,\cdots,D_{n-1}\},
	\]
	whose elements are compact intervals. Since $(S^1,f)$ is topologically transitive, Lemma \ref{Lemma 7.1} implies that
	\[
	D_i=f^i(D_0)\quad \text{for every }0\le i\le n-1,
	\qquad
	S^1=\bigcup_{i=0}^{n-1}f^i(D_0).
	\]
	As above, $(D_0,f^n|_{D_0})$ satisfies the specification property. Since
	\[
	h_{\mathrm{top}}(f^n,D_0)=n\,h_{\mathrm{top}}(f,S^1)>0,
	\]
	Theorem \ref{Theorem C} completes the proof.
\end{proof}

Next, we consider subshifts with positive topological entropy that are factors of an $S$-gap shift or a $\beta$-shift.

\begin{theorem}
	Suppose that a subshift $(X,\sigma)$ with $h_{\mathrm{top}}(\sigma,X)>0$ is a factor of an $S$-gap shift or a $\beta$-shift. Then for any non-trivial weight $\vartheta\in \ell^\infty(\mathbb{N})$,
	\[
	h_{\mathrm{top}}(\sigma,\mathcal{N}_\vartheta(\sigma,X))=h_{\mathrm{top}}(\sigma,X)>0.
	\]
	In particular, $(X,\sigma)$ is Bohr chaotic.
\end{theorem}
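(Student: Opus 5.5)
The plan is to reduce to Theorem \ref{Theorem C}. Known results about $S$-gap shifts and $\beta$-shifts should give the needed specification-type structure. Factors should then preserve that structure, and a cyclic decomposition absorbs any periodicity.

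First, I recall the relevant facts. Every $\beta$-shift satisfies the almost specification property (Pfister--Sullivan). Every $S$-gap shift satisfies the almost specification property as well; in the mixing case it even satisfies a weak specification type property (see \cite[Section 3]{Kwietniak-Lacka-Oprocha-2016} and the references there). Then I check that almost specification passes to factors. Let $\pi:(W,\sigma)\to(X,\sigma)$ be a factor map between subshifts. By uniform continuity, for each $\varepsilon>0$ there is $\varepsilon'>0$ with $d(u,v)<\varepsilon'\Rightarrow d(\pi u,\pi v)<\varepsilon$. Hence
\[
\pi\bigl(B_n(\sigma;g(n,\varepsilon');w,\varepsilon')\bigr)\subset B_n(\sigma;\tilde g(n,\varepsilon);\pi w,\varepsilon)
\]
with the mistake function $\tilde g(n,\varepsilon):=g(n,\varepsilon')$. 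Alternatively one can use a sliding block code of window $2r+1$, which costs at most $(2r+1)g$ mistakes, still $o(n)$. Since $\pi$ is surjective, any targets $x_j\in X$ lift to points $w_j$ with $\pi(w_j)=x_j$. So $(X,\sigma)$ inherits the almost specification property with $k_{\tilde g}(\varepsilon):=k_g(\varepsilon')$, and therefore the modified almost specification property.

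Second, I apply Theorem \ref{Theorem C} with $L=1$ and $Y=X$. Hypothesis (2) is trivial, and $h_{\mathrm{top}}(\sigma,X)>0$ is assumed. This gives
\[
h_{\mathrm{top}}(\sigma,\mathcal N_\vartheta(\sigma,X))=h_{\mathrm{top}}(\sigma,X)>0
\]
for every non-trivial weight $\vartheta$, and Bohr chaoticity follows.

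The main obstacle is the $S$-gap case when the shift is not mixing, for instance when $\gcd\{s+1:s\in S\}=p>1$, if the cited almost specification result needs mixing there. My fallback in that case is the cyclic-decomposition form of Theorem \ref{Theorem C}. Let $Y'$ be the subshift of points of $W$ whose coordinates $0\bmod p$ are block starts. Then $(Y',\sigma^p)$ is mixing and has specification-type properties, and $W=\bigcup_{i<p}\sigma^i(Y')$. Pushing forward by $\pi$ and using the factor argument above for $\sigma^p$ gives $X=\bigcup_{i<p}\sigma^i(\pi Y')$. In this decomposition $(\pi Y',\sigma^p)$ has modified almost specification and entropy $p\,h_{\mathrm{top}}(\sigma,X)>0$. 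Theorem \ref{Theorem C} then applies with $L=p$.

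The point needing the most care is that the mistake function stays sublinear under the factor map. That amounts to checking that the set of coordinates where the shadowing fails for $\pi$ is controlled by a bounded neighborhood of the mistake set upstairs.
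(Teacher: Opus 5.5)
Your argument is correct, but it takes a different route from the paper.

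\textbf{How the two routes compare.} The paper never proves a specification-type property for $X$ itself. It uses the Climenhaga--Thompson--Yamamoto result that for each $\eta<h_{\mathrm{top}}(\sigma,X)$ there is a transitive sofic subshift $X_\eta\subset X$ with $h_{\mathrm{top}}(\sigma,X_\eta)>\eta$. Such an $X_\eta$ has specification relative to a regular periodic decomposition, so Lemma \ref{Lemma 7.1} supplies the cyclic decomposition that Theorem \ref{Theorem C} needs. The paper applies Theorem \ref{Theorem C} to $X_\eta$ and then lets $\eta\uparrow h_{\mathrm{top}}(\sigma,X)$, using $\mathcal N_\vartheta(\sigma,X_\eta)\subset\mathcal N_\vartheta(\sigma,X)$.

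You instead apply Theorem \ref{Theorem C} directly to $X$, or to $\pi(Y')$ under $\sigma^p$. You get the hypothesis from almost specification of the covering shift and its inheritance by factors. That inheritance step is correct and in fact immediate: $\pi$ maps mistake balls into mistake balls time by time, with $\tilde g(n,\varepsilon)=g(n,\varepsilon')$.

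Your route gives more structure: the factor itself, or one cyclic piece of it, has modified almost specification, and no entropy approximation is needed. The paper's route needs no almost-specification input for $S$-gap shifts at all, and it handles periodicity automatically through the periodic decompositions of the sofic approximants.

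\textbf{Two inputs need correcting.}

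First, the fallback is not optional. If $p=\gcd\{s+1:s\in S\}>1$, all $1$'s of a point of $X_S$ lie in one residue class mod $p$. The parity argument from the proof of Theorem \ref{Theorem 6.2} then shows that $X_S$ fails modified almost specification: take $x_1=(10^s)^\infty$, $x_2=\sigma x_1$, and $n\in(s+1)\mathbb N^+$. So state the $S$-gap input only for $p=1$. Your aside that mixing $S$-gap shifts have a weak specification type property is also false in general, for instance when $S=\{2^k:k\ge 0\}$; you never use it, though.

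Second, almost specification of aperiodic $S$-gap shifts is the one step you should justify rather than cite vaguely.
\begin{itemize}
\item For finite $S$, $X_S$ is a mixing SFT, so it has specification.
\item For infinite $S$, the semigroup generated by $S+1$ has a finite Frobenius number $\Phi$. Fix a large $s^*\in S$ and write $N=q(s^*+1)+\rho$ with $\rho\in(\Phi,\Phi+s^*+1]$. This shows every $N>\Phi$ is a sum of at most $N/(s^*+1)+C(s^*)$ elements of $S+1$, that is, $o(N)$ summands, since $s^*$ can be taken arbitrarily large.
\item Consequently every word of length $n$ can be altered in $o(n)$ places, plus $O(1)$ changes near its ends, so that it begins with $1$ and ends with $10^{s_0}$ for a fixed $s_0\in S$. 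Such words concatenate freely.
\end{itemize}
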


\begin{proof}
	By \cite[Theorem B]{Climenhaga-Thompson-Yamamoto-2017}, for any $\eta<h_{\mathrm{top}}(\sigma,X)$, there exists a topologically transitive sofic subshift $X_\eta\subset X$ with $h_{\mathrm{top}}(\sigma,X_\eta)>\eta$. Moreover, by the proof of \cite[Corollary 40]{Kwietniak-Lacka-Oprocha-2016}, the system $(X_\eta,\sigma)$ has the relative specification property with respect to a regular periodic decomposition
	\[
	\mathcal{D}=\{D_0,\cdots,D_{n-1}\}.
	\]
	Since $(X_\eta,\sigma)$ is topologically transitive, Lemma \ref{Lemma 7.1} implies that
	\[
	D_i=\sigma^i(D_0)\quad \text{for every }0\le i\le n-1,
	\qquad
	X_\eta=\bigcup_{i=0}^{n-1}\sigma^i(D_0).
	\]
	Therefore, Theorem \ref{Theorem C} yields
	\[
	h_{\mathrm{top}}(\sigma,\mathcal{N}_\vartheta(\sigma,X))
	\ge
	h_{\mathrm{top}}(\sigma,\mathcal{N}_\vartheta(\sigma,X_\eta))
	>
	\eta.
	\]
	Finally, since $\eta<h_{\mathrm{top}}(\sigma,X)$ is arbitrary, the conclusion follows.
\end{proof}

\bigskip

\textbf{Acknowledgements.}   The authors would like to thank an anonymous referee for his/her careful reading and valuable comments, which helped improve the presentation of this paper. X. Hou is supported by the National Natural Science Foundation of China (No. 12401231), China Postdoctoral Science Foundation (No. 2023M740713), and the Postdoctoral Fellowship Program of CPSF under Grant Number GZB20240167. W. Lin is supported by the National Natural Science Foundation of China (No. 124B2010). X. Tian is supported by the National Natural Science Foundation of China (No. 12471182) and  Natural Science Foundation of Shanghai (No. 23ZR1405800).

\textbf{Conflict of interest.} The authors declare that there is no conflict of interest.

\textbf{Data availability.} No data was used for the research described in the article.

\end{document}